\newtheorem{thm}{Theorem}[section]
\newtheorem{proposition}[thm]{Proposition}
\newtheorem{lemma}[thm]{Lemma}
\newtheorem{cor}[thm]{Corollary}
\newtheorem{exa}[thm]{Example}
\theoremstyle{definition}
\newtheorem{definition}[thm]{Definition}
\newtheorem{remark}[thm]{Remark}
\newtheorem{question}{Question}
\newtheorem{problem}{Problem}
\newtheorem{convention}[thm]{Convention}
\newtheorem*{ack}{Acknowledgments}
\def\conv{\operatorname{conv}}
\def\er{\mathbb R}
\def\R{\mathcal R}
\def\C{\mathcal{C}}
\def\RC{\mathcal R \mathcal C}
\def\GO{\mathcal G \Omega}
\def\tn{|\!|\!|}
\def\en{\mathbb N}
\def\eps{\varepsilon}
\def\ov{\overline}
\def \rng {\operatorname{Rng}}
\def \dom {\operatorname{Dom}}
\begin{document}
\author{Marek C\'uth}
\title{Noncommutative Valdivia compacta}
\thanks{The work was supported by the Grant No. 282511/B-MAT/MFF of the Grant Agency of the Charles University in Prague and by the Research grant GA \v{C}R P201/12/0290}
\email{cuthm5am@karlin.mff.cuni.cz}
\address{Charles University, Faculty of Mathematics and Physics, Sokolovsk\'a 83, 186 75 Praha 8 Karl\'{\i}n, Czech Republic}
\subjclass[2010]{46B26,54D30}
\keywords{retractional skeleton, projectional skeleton, Valdivia compacta, Plichko spaces}
\begin{abstract}
We prove some generalizations of results concerning Valdivia compact spaces (equivalently spaces with a commutative retractional skeleton) to the spaces with a retractional skeleton (not necessarily commutative). Namely, we show that the dual unit ball of a Banach space is Corson provided the dual unit ball of every equivalent norm has a retractional skeleton. Another result to be mentioned is the following. Having a compact space $K$, we show that $K$ is Corson if and only if every continuous image of $K$ has a retractional skeleton.

We also present some open problems in this area.
\end{abstract}
\maketitle
\section{Introduction}

It is well known that separable Banach spaces have many nice properties. In particular, any separable Banach space admits an equivalent norm which is locally uniformly convex (see e.g. {\cite[Theorem II.2.6]{DGZ}}) and any separable Banach space admits a Markushevich basis (see e.g. {\cite[Theorem 1.22]{hajek}}). Nonseparable Banach spaces need not have those properties. As an example we may take the space $\ell_\infty$ which does not admit any equivalent locally uniformly convex norm and it does not admit a Markushevich basis either (see e.g. {\cite[Theorem II.7.10]{DGZ}} and {\cite[Theorem 5.12]{hajek}}). However, some nonseparable Banach space share those properties of separable ones. For example, any Hilbert space has a locally uniformly convex norm and admits a Markushevich basis.

Having certain nonseparable Banach space, sometimes it is useful to decompose it into smaller pieces (subspaces). There is a hope that if we glue them together, their properties will be preserved by the nonseparable Banach space we started with.

One possible concept of such a decomposition is a \textit{projectional resolution of the identity} (PRI, for short - see e.g. {\cite[Definition 3.35]{hajek}}). However, in a Banach space of density larger than $\aleph_1$, the existence of a PRI does not tell us much about the structure of the space. There are some ways to solve this problem. One of them is the concept of a \textit{projectional generator} (PG, for short). This is a technical tool from which the existence of a PRI follows (see e.g. {\cite[Theorem 3.42]{hajek}}). Moreover, the existence of a PG has consequences also for the structure of the space (see e.g. {\cite[Theorem 5.44]{hajek}}).

Nevertheless, the concept of a PG is not completely satisfactory as it is quite technical. It seems that the right notion is that of a \textit{projectional skeleton} introduced by W.Kubi\'s in \cite{kubis}. The existence of a 1-projectional skeleton implies the existence of a PRI and it has some consequences for the structure of the space (see e.g. {\cite[Corollary 25 and Proposition 26]{kubis}}). Moreover, this notion is not so technical as the concept of a projectional generator.

Spaces with a projectional skeleton are more general than Plichko spaces, but closely connected with them.
Similarly, in \cite{kubisSmall} there has been introduced a class of compact spaces with a retractional skeleton and it has been observed in \cite{kubisSmall} and \cite{kubis} that those spaces are more general than Valdivia compacta, but they share a lot of properties with them.

Motivated by the above, we wanted to see how many properties are preserved and we have generalized some results concerning Valdivia compacta and 1-Plichko spaces.

Namely, we show that the dual unit ball of a Banach space is Corson provided the dual unit ball of every equivalent norm has a retractional skeleton. This generalizes the result contained in \cite{kalenda}.
Another result to be mentioned is the following. Having a compact space $K$ which is a continuous image of a space with a retractional skeleton, we show that the dual unit ball of $\C(K)$ is Corson whenever the dual unit ball of every subspace of $\C(K)$ has a retractional skeleton. This generalizes the result from \cite{kalenda1}.

Proofs of those main results are analogous to the proofs from \cite{hajek} and \cite{kalenda1}. We only had to use some conclusions from \cite{kubis}, \cite{kubisKniha} and apply them. However, three times we had to come with another approach when proving some auxiliary results (see Lemma \ref{lClosedSubset}, \ref{lQuotient} and \ref{lQuotient2}).

Nonetheless, for some statements concerning Valdivia compact spaces we were unable to give similar results concerning spaces with a retractional skeleton. Some of those problems are formulated at the end of this article.\\

Below we recall the most relevant notions, definitions and notations.

We denote by $\omega$ the set of all natural numbers (including $0$), by $\en$ the set $\omega\setminus\{0\}$. Whenever we say that a set is countable, we mean that the set is either finite or infinite and countable. If $f$ is a mapping then we denote by $\rng f$ the range of $f$ and by $\dom f$ the domain of $f$.

Let $T$ be a topological space. The closure of a set $A$ we denote by $\ov{A}$. We say that $A\subset T$ is \textit{countably closed} if $\ov{C}\subset A$ for every countable $C\subset A$. A topological space $T$ is a \textit{Fr\'echet-Urysohn space} if for every $A\subset T$ and every $x\in\ov{A}$ there is a sequence $x_n\in A$ with $x_n\to x$.

All compact spaces are assumed to be Hausdorff. Let $K$ be a compact space. By $\C(K)$ we denote the space of continuous functions on $K$. $P(K)$ stands for the space of probability measures with the $w^*$--topology (the $w^*$--topology is taken from the representation of $P(K)$ as a compact subset of $(\C(K)^*,w^*)$).

Let $\Gamma$ be a set. We put $\Sigma(\Gamma) = \{x\in\er^\Gamma:\;|\{\gamma\in\Gamma:\;x(\gamma)\neq 0\}|\leq\omega\}$. Given a compact $K$, $A\subset K$ is called a \textit{$\Sigma$-subset} of $K$ if there is a homeomorphic embedding $h:K\to[0,1]^\kappa$ such that $A = h^{-1}[\Sigma(\kappa)]$. A compact space $K$ is \textit{Corson compact} if $K$ is a $\Sigma$-subset of $K$. A compact space $K$ is \textit{Valdivia compact} if there exists a dense $\Sigma$-subset of $K$.

We shall consider Banach spaces over the field of real numbers (but many results hold for complex spaces as well). If $X$ is a Banach space and $A\subset X$, we denote by $\conv{A}$ the convex hull of $A$. $B_X$ is the unit ball in $X$ (i.e. the set $\{x\in X:\; \|x\| \leq 1\}$). $X^*$ stands for the (continuous) dual space of $X$. For a set $A\subset X^*$ we denote by $\ov{A}^{w^*}$ the $weak^*$ closure of $A$.

A set $D\subset X^*$ is r-\textit{norming} if
$$\|x\| \leq r. \sup\{|x^*(x)|:\;x^*\in D\cap B_{X^*}\}.$$
We say that a set $D\subset X^*$ is norming if it is r-norming for some $r\geq 1$.

Recall that a Banach space $X$ is called \textit{Plichko} (resp. 1-\textit{Plichko}) if there are a linearly dense set $M\subset X$ and a norming (resp. 1-norming) set $D\subset X^*$ such that for every $x^*\in D$ the set $\{m\in M:\;x^*(m)\neq 0\}$ is countable. 

\begin{definition}A \textit{projectional skeleton} in a Banach space $X$ is a family of projections $\{P_s\}_{s\in\Gamma}$, indexed by an up-directed partially ordered set $\Gamma$, such that
\begin{enumerate}[\upshape (i)]
	\item $X = \bigcup_{s\in\Gamma}P_s X$ and each $P_s X$ is separable.
	\item $s\leq t \Rightarrow P_s = P_s\circ P_t = P_t\circ P_s.$
	\item Given $s_0 < s_1 < \cdots$ in $\Gamma$, $t = \sup_{n\in\omega}s_n$ exists and $P_t X = \ov{\bigcup_{n\in\omega}P_{s_n}X}$.
\end{enumerate}
We shall say that $\{P_s\}_{s\in\Gamma}$ is an \textit{r-projectional skeleton} if it is a projectional skeleton such that $\|P_s\|\leq r$ for every $s\in\Gamma$.

We say that $\{P_s\}_{s\in\Gamma}$ is a \textit{commutative projectional skeleton} if $P_s\circ P_t = P_t\circ P_s$ for every $s,t\in\Gamma$.
\end{definition}

\begin{definition}Let $\mathfrak{s} = \{P_s\}_{s\in\Gamma}$ be a projectional skeleton in a Banach space $X$ and let $D(\mathfrak{s}) = \bigcup_{s\in\Gamma}P^*_s[X^*]$. Then we say that $D(\mathfrak{s})$ is induced by a projectional skeleton.\end{definition}

Recall that due to \cite{kubis}, we may always assume that every projectional skeleton is an r-projectional skeleton for some $r\geq 1$ (just by passing to a suitable cofinal subset of $\Gamma$).

\begin{definition}A \textit{retractional skeleton} ($r$-\textit{skeleton}, for short) in a compact space $K$ is a family of retractions $\{r_s\}_{s\in\Gamma}$, indexed by an up-directed partially ordered set $\Gamma$, such that
\begin{enumerate}[\upshape (i)]
	\item For every $x\in K$, $x = \lim_{s\in\Gamma}r_s(x)$ and $r_s[K]$ is metrizable for each $s\in\Gamma$.
	\item $s\leq t \Rightarrow r_s = r_s\circ r_t = r_t\circ r_s.$
	\item Given $s_0 < s_1 < \cdots$ in $\Gamma$, $t = \sup_{n\in\omega}s_n$ exists and $r_t(x) = \lim_{n\to\infty}r_{s_n}(x)$ for every $x\in K$.
\end{enumerate}
We shall say that $\{r_s\}_{s\in\Gamma}$ is a \textit{commutative retractional skeleton} if $r_s\circ r_t = r_t\circ r_s$ for every $s,t\in\Gamma$.
\end{definition}

By $\R_0$ we denote the class of all compacta which have a retractional skeleton.

\begin{definition}Let $\mathfrak{s} = \{r_s\}_{s\in\Gamma}$ be a retractional skeleton in a compact space $K$ and let $D(\mathfrak{s}) = \bigcup_{s\in\Gamma}r_s[K]$. Then we say that $D(\mathfrak{s})$ is induced by an $r$-skeleton in $K$.\end{definition}

The class of Banach spaces with a projectional skeleton (resp. class of compact spaces with a retractional skeleton) is closely related to the concept of Plichko spaces (resp. Valdivia compacta). By {\cite[Theorem 27]{kubis}}, Plichko spaces are exactly spaces with a commutative projectional skeleton. By {\cite[Theorem 6.1]{kubisSmall}}, Valdivia compact spaces are exactly compact spaces with a commutative retractional skeleton. Moreover, it immediately follows from the proof of {\cite[Theorem 6.1]{kubisSmall}} that whenever $K$ is a Valdivia compact with a dense $\Sigma$-subset $A$, then $A$ is induced by a commutative $r$-skeleton in $K$.

When $X$ is a Banach space with $\operatorname{dens} X = \aleph_1$, then $X$ is a Plichko space if and only if it has a projectional skeleton. Similarly, if $K$ is a compact space with weight $\leq\aleph_1$, then $K$ is Valdivia if and only if $K$ has a retractional skeleton. Indeed, in this case the projectional (resp. retractional) skeleton can be indexed by a well-ordered set $[0,\aleph_1)$, so it may be commutative.

An example of a compact space with a retractional skeleton which is not Valdivia is $[0,\omega_2]$ (see {\cite[Example 6.4]{kubisSmall}}). An example of a space with a 1-projectional skeleton which is not Plichko is $\C([0,\omega_2])$ (see {\cite[Theorem 1]{kalenda4}}).

\section{Main results}
The following is a generalization of {\cite[Theorem 1]{kalenda}}.
\begin{thm}\label{tMain1}The following conditions are equivalent for a Banach space $\langle X,\|\cdot\|\rangle$:
 \begin{enumerate}[\upshape (i)]
	\item $(B_{\langle X^*,\|\cdot\|\rangle},w^*)$ is Corson.	
	\item $\langle X,\tn\cdot\tn\rangle$ has a 1-projectional skeleton for every equivalent norm $\tn\cdot\tn$.
	\item $(B_{\langle X^*,\tn\cdot\tn\rangle},w^*)$ has a retractional skeleton for every equivalent norm $\tn\cdot\tn$.
 \end{enumerate}
\end{thm}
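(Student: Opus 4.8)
The plan is to prove the cycle of implications (i) $\Rightarrow$ (ii) $\Rightarrow$ (iii) $\Rightarrow$ (i). The guiding observation is that condition~(i) is, in disguise, an isomorphic and norm-independent statement: $(B_{\langle X^*,\|\cdot\|\rangle},w^*)$ is Corson if and only if $X$ is weakly Lindel\"of determined (WLD), and this does not change when $\|\cdot\|$ is replaced by an equivalent norm. Thus (i) is equivalent to ``$(B_{\langle X^*,\tn\cdot\tn\rangle},w^*)$ is Corson for \emph{every} equivalent norm $\tn\cdot\tn$'', and the role of (ii), (iii) is to provide a priori weaker conditions squeezed between this reformulation and itself.

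For (i) $\Rightarrow$ (ii): by the observation above $X$ is WLD, and it remains WLD under any equivalent renorming, so it suffices to recall that a WLD Banach space admits a $1$-projectional skeleton in each given norm. This is implicit in \cite{kubis}: a Corson dual ball is its own dense $\Sigma$-subset, hence by \cite[Theorem 6.1]{kubisSmall} it is induced by a commutative retractional skeleton of $B_{\langle X^*,\tn\cdot\tn\rangle}$, and carrying this skeleton over to $X$ via the duality of \cite{kubis} produces a projectional skeleton whose projections have norm at most $1$, because the underlying retractions leave the dual ball invariant (see also \cite{hajek}).

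For (ii) $\Rightarrow$ (iii): fix an equivalent norm $\tn\cdot\tn$ and a $1$-projectional skeleton $\{P_s\}_{s\in\Gamma}$ of $\langle X,\tn\cdot\tn\rangle$. I would verify directly that $\{P^*_s\restriction B_{\langle X^*,\tn\cdot\tn\rangle}\}_{s\in\Gamma}$ is a retractional skeleton of $(B_{\langle X^*,\tn\cdot\tn\rangle},w^*)$: each $P^*_s$ is a retraction of the dual ball onto $P^*_s[B_{\langle X^*,\tn\cdot\tn\rangle}]$ (idempotency passes to the adjoint, and $\|P_s\|\le 1$ keeps the ball invariant); this range is $w^*$-metrizable because $P_s X$ is separable; axiom~(ii) of a retractional skeleton is axiom~(ii) of a projectional skeleton after taking adjoints; and the required $w^*$-convergences ($P^*_s x^*\to x^*$, and $P^*_{s_n}x^*\to P^*_t x^*$ for increasing chains with $t=\sup_n s_n$) follow from $\overline{\bigcup_{s}P_s X}=X$ together with the uniform bound $\|P_s\|\le1$. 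This is the ``unit-ball'' instance of the correspondence between projectional and retractional skeletons of \cite{kubis} (see also \cite{kubisKniha}).

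The heart of the theorem is (iii) $\Rightarrow$ (i), which I would prove by contraposition, following the scheme used for the Valdivia analogue in \cite{kalenda} and \cite{kalenda1}. Suppose $(B_{\langle X^*,\|\cdot\|\rangle},w^*)$ is not Corson; one has to produce an equivalent norm $\tn\cdot\tn$ such that $(B_{\langle X^*,\tn\cdot\tn\rangle},w^*)$ has no retractional skeleton at all. Starting from the failure of~(i), one renorms $X$ so that the renormed dual ball either contains, as a closed subset, or maps continuously onto (as the dual ball of an equivalently renormed quotient of $X$) a compactum that provably lies outside $\R_0$, for instance a separable non-metrizable one. Then Lemmas~\ref{lClosedSubset}, \ref{lQuotient} and~\ref{lQuotient2}, which say that membership in $\R_0$ is inherited by the relevant closed subsets and quotients, prevent $(B_{\langle X^*,\tn\cdot\tn\rangle},w^*)$ from having a retractional skeleton, so (iii) fails. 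The hard part, and the place where the argument genuinely departs from its Valdivia predecessor, is exactly this renorming: classically it is enough to destroy \emph{commutative} retractional skeletons, i.e.\ to make the renormed dual ball non-Valdivia, and one invokes the classical permanence properties of Valdivia compacta, whereas here one must exclude \emph{all} retractional skeletons, which is why those classical facts must be replaced by Lemmas~\ref{lClosedSubset}, \ref{lQuotient}, \ref{lQuotient2}, and why one has to check that the norm extracted from the failure of~(i) plants a compactum outside $\R_0$ and not merely outside the class of Valdivia compacta.
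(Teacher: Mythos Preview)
Your (i)$\Rightarrow$(ii) and (ii)$\Rightarrow$(iii) are correct and match the paper (the latter is exactly Proposition~\ref{lPSkeletonImpliesRSkeleton}, used inside Proposition~\ref{pSkeletonAndPK}). The gap is in (iii)$\Rightarrow$(i). You have the right contrapositive shape, but both substantive steps are missing or wrong. You never say how the bare hypothesis ``$(B_{X^*},w^*)$ is not Corson'' is converted into a concrete obstruction one can renorm against. The paper's mechanism is the dichotomy of Theorem~\ref{tDichotomy}: either $B_{X^*}\notin\R_0$ already, or $[0,\omega_1]$ embeds in $(B_{X^*},w^*)$. From that embedding Lemma~\ref{lNotRetract} builds an explicit $w^*$-compact convex $L\notin\R_0$, detected by the Fr\'echet--Urysohn obstruction of Corollary~\ref{cNotSkeleton} (a point reached both by a convergent sequence of $G_\delta$ points and by an uncountable family with no convergent subsequence), and Theorem~\ref{tNotSkeleton} renorms so that a translate $L+h$ sits as a closed $w^*$-$G_\delta$ face of the new dual ball; Theorem~\ref{tDedicnost}(ii) then forces $B_{\langle X^*,\tn\cdot\tn\rangle}\notin\R_0$. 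Your candidate obstruction, ``a separable non-metrizable compactum'', is simply false: $[0,1]^{\omega_1}$ is separable, non-metrizable, and Valdivia, hence in $\R_0$.

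Your appeal to Lemmas~\ref{lQuotient} and~\ref{lQuotient2} is also misplaced. They are not used in the proof of Theorem~\ref{tMain1} at all (they feed into Theorems~\ref{tMain2} and~\ref{tMain3}), and more to the point they go the wrong way for your sketch: both lemmas transfer an $r$-skeleton \emph{from} the quotient/image \emph{back to} the original space, not from the original to the quotient, so they cannot be used to argue that a quotient of the renormed dual ball inherits membership in $\R_0$. The only inheritance fact actually needed here is Theorem~\ref{tDedicnost}(ii) for closed $G_\delta$ subsets.
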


The following is a generalization of {\cite[Theorem 3.1]{kalenda2}}.

\begin{thm}\label{tMain}The following conditions are equivalent for a compact space $K$.
	\begin{enumerate}[\upshape (i)]
		\item $K$ is a Corson compact.
		\item Every continuous image of $K$ has a retractional skeleton.
	\end{enumerate}
\end{thm}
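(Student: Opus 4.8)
The plan is to prove the two implications separately. The implication (i)$\Rightarrow$(ii) is immediate; for the converse I would reduce, by a reflection argument, to continuous images of weight $\le\aleph_1$, where — as recalled in the Introduction — having a retractional skeleton and being Valdivia are equivalent, and then invoke the Valdivia analogue {\cite[Theorem 3.1]{kalenda2}}.

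For (i)$\Rightarrow$(ii): given a continuous image $L$ of $K$, I would use the classical fact that continuous images of Corson compact spaces are Corson to conclude that $L$ is Corson; then $L$ is (trivially) a dense $\Sigma$-subset of itself, so $L$ is Valdivia, and by {\cite[Theorem 6.1]{kubisSmall}} every Valdivia compact space carries a (commutative) retractional skeleton. Hence every continuous image of $K$ has a retractional skeleton.

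For (ii)$\Rightarrow$(i): I would argue by contraposition and assume $K$ is not Corson. First I would use the reflection property of Corson compactness — a compact space all of whose continuous images of weight $\le\aleph_1$ are Corson is itself Corson — to obtain a continuous surjection $\varphi\colon K\to L$ with $L$ of weight $\le\aleph_1$ and $L$ not Corson. Next, let $M$ be an arbitrary continuous image of $L$; then $M$ is a continuous image of $K$, so by (ii) it has a retractional skeleton, while its weight is at most that of $L$, hence $\le\aleph_1$; by the equivalence recalled in the Introduction, $M$ is Valdivia. Therefore every continuous image of $L$ is Valdivia, so by {\cite[Theorem 3.1]{kalenda2}} $L$ is Corson — contradicting the choice of $L$. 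Hence $K$ is Corson.

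The external ingredients are the two classical facts that continuous images of Corson compacta are Corson and that Corson compactness is ``detected'' by continuous images of weight $\le\aleph_1$; if no direct reference is available for the second, it can be supplied by a routine argument with elementary submodels. The hard point, and the reason the statement is a genuine strengthening of {\cite[Theorem 3.1]{kalenda2}} and not a formal consequence of it, is that ``having a retractional skeleton'' is strictly weaker than ``being Valdivia'' (for instance $[0,\omega_2]$ has a retractional skeleton but is not Valdivia), so (ii) cannot be plugged directly into {\cite[Theorem 3.1]{kalenda2}}; the whole purpose of the reflection step is to move everything to weight $\le\aleph_1$, where the two notions coincide.
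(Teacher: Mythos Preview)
Your argument is correct and takes a genuinely different route from the paper's. The paper obtains Theorem~\ref{tMain} as an immediate corollary of the stronger Theorem~\ref{tMain3}: under (ii) the space $K$ itself has a retractional skeleton, hence lies in the class~$\GO$, and by Proposition~\ref{pSkeletonAndPK} every continuous image $L$ of $K$ satisfies $P(L)\in\R_0$; Theorem~\ref{tMain3} then yields that $K$ is Corson. The real work is hidden in the proof of Theorem~\ref{tMain3}, which manufactures explicit ``bad'' continuous images via Propositions~\ref{pGDeltaPointsNotCorson}, \ref{pContinuousImage} and Corollary~\ref{cFourDisjointCopies}. Your route instead reflects down to weight~$\le\aleph_1$, where having a retractional skeleton coincides with being Valdivia, and then quotes {\cite[Theorem~3.1]{kalenda2}} as a black box. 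This is shorter and makes transparent why the noncommutative statement reduces to the Valdivia one; the paper's detour, on the other hand, delivers the finer Theorem~\ref{tMain3} as a by-product.

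One caveat: the general reflection principle you invoke --- that a compact space all of whose continuous images of weight $\le\aleph_1$ are Corson is itself Corson --- is not as standard a citation as the stability of Corson compacta under continuous images, and the promised elementary-submodel argument, while feasible, is not entirely routine. You can sidestep it: under (ii) the space $K$ has a retractional skeleton, so $K\in\R_0\subset\RC$, and the dichotomy Theorem~\ref{tDichotomy} gives an embedding $[0,\omega_1]\hookrightarrow K$ whenever $K$ is not Corson; choosing $\aleph_1$ functions in $\C(K)$ that separate the points of this copy produces a continuous image $L$ of weight $\le\aleph_1$ containing a homeomorphic copy of $[0,\omega_1]$, hence not Corson. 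This supplies the non-Corson small image you need without appealing to a general reflection theorem.
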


We will get the last theorem as a special case of Theorem \ref{tMain3} bellow. To formulate it in a simple general way, we use the class of compact spaces introduced in \cite{kalenda1}.

\begin{definition}
  A compact Hausdorff space is said to belong to the class $\GO$ if for every nonempty open subset $U\subset K$ the following holds.
 \begin{quotation}If $U$ does not contain at least one $G_\delta$ point of $K$, then the one-point compactification of $U$ contains a homeomorphic copy of $[0,\omega_1]$.\end{quotation}
\end{definition}

We will need also the following notion of property $(M)$.

\begin{definition}
 A compact space $K$ is said to have the property $(M)$ if every Radon probability measure on $K$ has separable support.
\end{definition}

The following two theorems are generalizations of {\cite[Theorem 1]{kalenda1}} and {\cite[Theorem 2]{kalenda1}}.
\begin{thm}\label{tMain2}The following conditions are equivalent for a compact space $K$ from the class $\GO$.
 \begin{enumerate}[\upshape (i)]
 	\item $K$ is a Corson compact with the property $(M)$.
 	\item Every subspace of $\C(K)$ has a 1-projectional skeleton.
 	\item $(B_{Y^*},w^*)$ has a retractional skeleton for every subspace $Y\subset \C(K)$.
 \end{enumerate}
 In particular, the assumptions of this theorem are satisfied if $K$ is a continuous image of a space with a retractional skeleton.
\end{thm}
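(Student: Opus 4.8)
The plan is to prove the cycle of implications (i) $\Rightarrow$ (ii) $\Rightarrow$ (iii) $\Rightarrow$ (i), and to obtain the final assertion from the separate fact that every continuous image of a compact space with a retractional skeleton belongs to $\GO$; this generalizes the analogous statement for Valdivia compacta from \cite{kalenda1}, and I would prove it in the same way, using that a set induced by an $r$-skeleton in a compactum is dense, countably closed and Fr\'echet--Urysohn (see \cite{kubisSmall} and \cite{kubis}).

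For (i) $\Rightarrow$ (ii): if $K$ is Corson and has the property $(M)$, then $\C(K)$ is weakly Lindel\"of determined (this is classical). Since being weakly Lindel\"of determined passes to closed subspaces, every subspace $Y\subset\C(K)$ is weakly Lindel\"of determined, hence $1$-Plichko, and so has a $1$-projectional skeleton by {\cite[Theorem 27]{kubis}}.

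The implication (ii) $\Rightarrow$ (iii) requires nothing about $K$ and is a routine check (essentially already recorded in \cite{kubisKniha}): given a $1$-projectional skeleton $\{P_s\}_{s\in\Gamma}$ in $Y$, each adjoint $P_s^*$ is a $w^*$-to-$w^*$ continuous projection with $\|P_s^*\|\le 1$, so $r_s:=P_s^*|_{B_{Y^*}}$ is a retraction of $(B_{Y^*},w^*)$ whose range is $w^*$-homeomorphic to $(B_{(P_sY)^*},w^*)$, hence metrizable because $P_sY$ is separable. The convergence requirements in conditions (i) and (iii) of the definition of a retractional skeleton, as well as the commutation relations in (ii), follow immediately from the corresponding properties of $\{P_s\}$, using that on bounded subsets of $Y^*$ the $w^*$-topology coincides with the topology of pointwise convergence and that $P_sy\to y$ for every $y\in Y$. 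Hence $\{r_s\}_{s\in\Gamma}$ is a retractional skeleton in $(B_{Y^*},w^*)$.

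The substantial part is (iii) $\Rightarrow$ (i), which I would prove by contraposition in two independent steps, following the scheme of \cite{kalenda1} but systematically replacing its arguments about $1$-Plichko spaces and Valdivia compacta by the corresponding arguments about $1$-projectional skeletons and retractional skeletons; this substitution is precisely where the three auxiliary lemmas of this paper (Lemmas~\ref{lClosedSubset}, \ref{lQuotient} and \ref{lQuotient2}) enter, since a retractional skeleton is far less transparent than a dense $\Sigma$-subset under the passage to closed subsets and to quotients. If $K$ fails the property $(M)$, fix a Radon probability $\mu$ on $K$ with non-separable support $S$; then $\C(K)$ has $\C(S)$ as a quotient and $\C(S)$ embeds into the non-separable space $L^1(\mu)$, and by the quotient lemmas this obstruction is transferred to a subspace $Y\subset\C(K)$ for which $(B_{Y^*},w^*)$ has no retractional skeleton, contradicting (iii). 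If $K$ is not Corson, use $K\in\GO$ to pick a nonempty open $U\subset K$ that contains no $G_\delta$ point of $K$ and whose one-point compactification contains a homeomorphic copy of $[0,\omega_1]$; from this configuration one again builds, with the help of Lemmas~\ref{lClosedSubset}, \ref{lQuotient} and \ref{lQuotient2}, a subspace $Y\subset\C(K)$ whose dual ball has no retractional skeleton. The hard part is this last construction: in the Valdivia setting one simply manipulates explicit dense $\Sigma$-subsets, whereas for retractional skeletons one must work through the auxiliary lemmas, and the delicate point is to certify that the subspace $Y$ so obtained genuinely fails to carry a retractional skeleton on its dual ball.
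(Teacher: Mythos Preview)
Your implications (i)$\Rightarrow$(ii) and (ii)$\Rightarrow$(iii) are fine, and your route through ``$K$ Corson with $(M)$ $\Rightarrow$ $\C(K)$ is WLD $\Rightarrow$ every subspace is WLD $\Rightarrow$ $1$-Plichko $\Rightarrow$ $1$-projectional skeleton'' is a legitimate alternative to the paper's direct citation of \cite{kalenda1}.

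The genuine gap is in (iii)$\Rightarrow$(i). Your ``not Corson'' step misreads the class $\GO$: membership in $\GO$ is the \emph{conditional} statement ``for every nonempty open $U$, \emph{if} $U$ contains no $G_\delta$ point of $K$, \emph{then} the one-point compactification of $U$ contains a copy of $[0,\omega_1]$''. It does not let you ``pick a nonempty open $U\subset K$ that contains no $G_\delta$ point''; indeed $K$ may have a dense set of $G_\delta$ points, in which case the $\GO$ hypothesis is vacuous and your construction never starts. The paper avoids this by routing through Theorem~\ref{tMain3}: if $L$ is any continuous image of $K$, then $\C(L)$ is a subspace of $\C(K)$, so (iii) gives $(B_{\C(L)^*},w^*)\in\R_0$, which is condition~(iii) of Theorem~\ref{tMain3}; that theorem then yields $K$ Corson. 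Thus the passage to continuous images, which you do not invoke, is exactly what makes the $\GO$ hypothesis bite.

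Your ``fails $(M)$'' step is also not a proof as written. The sentence ``$\C(K)$ has $\C(S)$ as a quotient and $\C(S)$ embeds into $L^1(\mu)$, and by the quotient lemmas this obstruction is transferred to a subspace $Y$'' does not identify any obstruction: $L^1(\mu)$ is a perfectly nice (WCG) space, and nothing you wrote produces a concrete $Y$ whose dual ball lacks a retractional skeleton. The paper's actual argument (Proposition~\ref{pCorsonWithoutM}) first uses that $K$ is Corson to get the unique dense $\Sigma$-subset $A\subset P(K)$, picks a continuous $\mu\in P(K)\setminus A$ and $k\in\operatorname{supp}\mu$, sets $Y=\{f\in\C(K):f(k)=\mu(f)\}$, and then uses Lemma~\ref{lQuotient2} together with the collision $i^*(\delta_k)=i^*(\mu)$, $\delta_k\in A$, $\mu\notin A$, to contradict $B_{Y^*}\in\R_0$. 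Note that this step \emph{requires} $K$ to already be Corson, so the two parts of (iii)$\Rightarrow$(i) are not ``independent'' as you claim: one must first establish Corson (via Theorem~\ref{tMain3}) and only then handle property~$(M)$.
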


\begin{thm}\label{tMain3}The following conditions are equivalent for a compact space $K$ from the class $\GO$.
 \begin{enumerate}[\upshape (i)]
 	\item $K$ is a Corson compact.
 	\item $\C(L)$ has a 1-projectional skeleton for every continuous image $L$ of $K$.
 	\item $(B_{\C(L)^*},w^*)$ has a retractional skeleton for every continuous image $L$ of $K$.
 	\item $P(L)$ has a retractional skeleton for every continuous image $L$ of $K$.
 \end{enumerate}
 In particular, the assumptions of this theorem are satisfied if $K$ is a continuous image of a space with a retractional skeleton.
\end{thm}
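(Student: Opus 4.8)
The plan is to prove the cycle of implications (i) $\Rightarrow$ (ii) $\Rightarrow$ (iii) $\Rightarrow$ (iv) $\Rightarrow$ (i) and then to read off the final sentence. The first three implications form the ``soft'' direction; they are obtained by transporting the classical facts about Valdivia compacta and $1$-Plichko spaces through the correspondence between projectional and retractional skeletons from \cite{kubis}, while the implication (iv) $\Rightarrow$ (i) is the substantive one and is where the hypothesis $K\in\GO$ is used. For (i) $\Rightarrow$ (ii): every continuous image $L$ of a Corson compact $K$ is again Corson, and for $L$ Corson the space $\C(L)$ is $1$-Plichko — if $h\colon L\hookrightarrow[0,1]^\kappa$ satisfies $h[L]\subseteq\Sigma(\kappa)$, then the finite products of the coordinate functions $\pi_\gamma\circ h$ form a linearly dense subset of $\C(L)$ each member of which meets only countably many Dirac measures, while $\{\pm\delta_x:x\in L\}$ is $1$-norming — hence by {\cite[Theorem 27]{kubis}} $\C(L)$ carries a commutative, in particular an ordinary, $1$-projectional skeleton.

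For (ii) $\Rightarrow$ (iii): if $X=\C(L)$ has a $1$-projectional skeleton $\{P_s\}_{s\in\Gamma}$, then $\{P_s^*|_{B_{X^*}}\}_{s\in\Gamma}$ is a retractional skeleton in $(B_{X^*},w^*)$, since each $P_s^*$ is a $w^*$-continuous norm-one projection with $w^*$-metrizable range and the three axioms transfer from those of $\{P_s\}$ using $X=\bigcup_s P_sX$; this is already contained in \cite{kubis}. For (iii) $\Rightarrow$ (iv): a retractional skeleton in $(B_{\C(L)^*},w^*)$ yields, again by \cite{kubis}, a norming subspace $D\subseteq\C(L)^*$ induced by a projectional skeleton, and the goal is to show that $D\cap P(L)$ is induced by a retractional skeleton in the $w^*$-compact convex set $P(L)$. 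This last step is not automatic, because the retractions attached to $D$ need not map $P(L)$ into itself; it is precisely what Lemma~\ref{lClosedSubset} — one of the three lemmas where a new argument was needed — is designed to handle, and one has to verify that $P(L)$ together with $D\cap P(L)$ satisfies its hypotheses.

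For (iv) $\Rightarrow$ (i): assume $P(L)$ has a retractional skeleton for every continuous image $L$ of $K$ and suppose, towards a contradiction, that $K$ is not Corson. Following the scheme of the proof of {\cite[Theorem~2]{kalenda1}}, the hypothesis $K\in\GO$ permits one to locate, inside a suitable continuous image $L_0$ of $K$, a copy of $[0,\omega_1]$ placed so that $L_0$ carries a ``non-separable'' behaviour, and then to deduce that $P(L_0)$ admits no retractional skeleton. The genuinely new difficulty here, compared with \cite{kalenda1}, is that in the Valdivia setting it sufficed to see that $P(L_0)$ is not Valdivia — i.e.\ has no \emph{commutative} retractional skeleton — whereas now one must rule out noncommutative ones as well; this is done using the structural properties of the set $D(\mathfrak s)$ induced by an arbitrary retractional skeleton on a compactum, namely that it is dense, countably closed and Fr\'echet--Urysohn (see \cite{kubisSmall}), none of which the configuration present in $P(L_0)$ can accommodate. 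The resulting contradiction shows $K$ is Corson.

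Finally, if $K$ is a continuous image of a space $\widetilde K$ with a retractional skeleton, then $K\in\GO$: whenever a nonempty open $U\subseteq K$ omits a $G_\delta$ point of $K$, transporting the situation to $\widetilde K$ and using its skeleton produces a copy of $[0,\omega_1]$ in the one-point compactification of $U$, which is the defining implication of $\GO$; thus the hypotheses of this theorem (and of Theorem~\ref{tMain2}) are met. The main obstacle in the whole argument is the concluding step of (iv) $\Rightarrow$ (i) — excluding \emph{every} retractional skeleton, not merely the commutative ones, on $P(L_0)$ for the ``bad'' continuous image $L_0$ — while the secondary, more technical point is the transfer in (iii) $\Rightarrow$ (iv) to the non-invariant convex set $P(L)$ carried out in Lemma~\ref{lClosedSubset}.
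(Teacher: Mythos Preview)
Your outline of the ``soft'' chain (i)$\Rightarrow$(ii)$\Rightarrow$(iii)$\Rightarrow$(iv) is essentially right, but the step (iii)$\Rightarrow$(iv) contains a wrong turn: a retractional skeleton on $(B_{\C(L)^*},w^*)$ does \emph{not} in general yield a projectional skeleton on $\C(L)$ (the implication in \cite{kubis} goes the other way). The correct argument is simpler and purely topological: $P(L)$ is a closed $G_\delta$ subset of $(B_{\C(L)^*},w^*)$, so if $D$ is the set induced by an $r$-skeleton in the ball, Lemma~\ref{lGDelta} gives that $D\cap P(L)$ is dense in $P(L)$, and then Lemma~\ref{lClosedSubset} gives the $r$-skeleton on $P(L)$. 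No detour through norming subspaces or projectional skeletons is needed (see Proposition~\ref{pSkeletonAndPK}).

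The real gap is in (iv)$\Rightarrow$(i). Your sketch (``locate a copy of $[0,\omega_1]$ inside a continuous image $L_0$'') covers only half of the argument. The paper, following \cite{kalenda1}, splits into two cases according to whether $\ov G$ (the closure of the $G_\delta$ points of $K$) is Corson. When $\ov G$ \emph{is} Corson but $K$ is not, the $\GO$ hypothesis produces open sets without $G_\delta$ points and hence copies of $[0,\omega_1]$; one then manufactures a continuous image with four disjoint nowhere dense such copies and applies Corollary~\ref{cFourDisjointCopies}. But when $\ov G$ is \emph{not} Corson --- for instance when $G$ is dense in $K$ --- the $\GO$ condition is vacuous and there need be no copy of $[0,\omega_1]$ at all. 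This case is handled by Proposition~\ref{pGDeltaPointsNotCorson}: one identifies two carefully chosen points $a,b\in\ov G$ (a non-isolated point of the induced set and a point outside it) and shows that for the resulting quotient $L$ the space $P(L)$ has no $r$-skeleton. The key technical ingredient here is Lemma~\ref{lQuotient}, which transfers an $r$-skeleton back through a quotient collapsing a metrizable closed set --- this is one of the three ``new argument'' lemmas you allude to, and it is precisely the one your plan is missing. Without it, the contrapositive of (iv)$\Rightarrow$(i) is incomplete.
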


\section{Properties of compact spaces with a retractional skeleton}

In this section we first collect several important properties of sets induced by an $r$-skeleton in a compact space. Those properties are similar to properties of dense $\Sigma$-subsets of Valdivia compact spaces and the proofs are often done in a similar way.
Having those results in hand, we deduce from them some properties of compact spaces with an $r$-skeleton. Those are similar to the ones of Valdivia compact spaces and the proofs are often done in the same way.

We start with the following theorem which sums up basic properties of sets induced by an $r$-skeleton.

\begin{thm}[{\cite[Theorem 30]{kubis}}]\label{tRetractFrechet}Assume $D$ is induced by an $r$-skeleton in $K$. Then:
\begin{enumerate}[\upshape (i)]
	\item $D$ is dense in $K$ and for every countable set $A\subset D$, $\overline{A}$ is metrizable and contained in $D$.
	\item $D$ is a Fr\'echet-Urysohn space.
	\item $D$ is a normal space and $K = \beta D$.
\end{enumerate}
\end{thm}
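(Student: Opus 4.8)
The plan is to write $K_s:=r_s[K]$ for $s\in\Gamma$, so that $D=D(\mathfrak s)=\bigcup_{s\in\Gamma}K_s$, and to rely on two elementary facts throughout. First, each $K_s$ is a closed metrizable subspace of $K$ (closed, being the continuous image of the compact $K$), the family $\{K_s\}_{s\in\Gamma}$ increases along $\Gamma$ (if $s\le t$ then $r_t\circ r_s=r_s$, so $r_t$ fixes $K_s$ and hence $K_s\subseteq K_t$), and $r_s$ restricts to the identity on $K_s$. Second, $\Gamma$ is $\sigma$-directed: given a countable $S\subseteq\Gamma$, build $u_0\le u_1\le\cdots$ with each $u_{n+1}$ above $u_n$ and above the $n$-th element of $S$; this sequence either stabilizes or has a strictly increasing cofinal subsequence, so $\sup_n u_n$ exists by condition (iii) and is an upper bound of $S$ (the same argument gives that any countable directed subset of $\Gamma$ has a least upper bound). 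Granting this, (i) is quick: $D$ is dense because $x=\lim_s r_s(x)$ with $r_s(x)\in K_s\subseteq D$; and for countable $A\subseteq D$ one picks $s$ above the indices of all points of $A$, so $A\subseteq K_s$, whence $\overline A\subseteq K_s$ (as $K_s$ is closed), a metrizable set contained in $D$.

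For (ii) I would first reduce the statement: given $A\subseteq D$ and $x\in D\cap\overline A$, it suffices to produce a \emph{countable} $B\subseteq A$ with $x\in\overline B$, since then $\overline B$ is compact metrizable by (i) — hence itself Fr\'echet--Urysohn — so a sequence in $B\subseteq A$ converges to $x$. To build $B$, fix $s_0$ with $x\in K_{s_0}$ and recursively choose an increasing chain $s_0\le s_1\le\cdots$ and countable sets $B_1\subseteq B_2\subseteq\cdots$ in $A$ as follows: given $s_n$ (so $x\in K_{s_n}$ and $r_{s_n}(x)=x$), fix a countable neighbourhood base $\{V_k^n\}_k$ of $x$ in the metrizable space $K_{s_n}$; since $r_{s_n}^{-1}[V_k^n]$ is an open neighbourhood of $x$ in $K$ and $x\in\overline A$, pick $a_k^n\in A\cap r_{s_n}^{-1}[V_k^n]$; set $B_{n+1}=B_n\cup\{a_k^n:k\in\omega\}$ and, using $\sigma$-directedness, choose $s_{n+1}\ge s_n$ with $B_{n+1}\subseteq K_{s_{n+1}}$. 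Put $t=\sup_n s_n$ and $B=\bigcup_n B_n$, a countable subset of $A$ contained in $K_t$. By construction one has property $(\ast_n)$: for every open $V\ni x$ in $K_{s_n}$, the set $r_{s_n}^{-1}[V]$ meets $B$.

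Proving $x\in\overline B$ is the step that genuinely uses the skeleton, and I expect it to be the main obstacle. The idea is to work inside the compact metric space $K_t$ and set $\rho_n:=r_{s_n}|_{K_t}:K_t\to K_{s_n}\subseteq K_t$. Since $\rho_n\circ\rho_m=\rho_n$ for $n\le m$, the compact sets $\rho_n^{-1}(x)$ decrease with $n$, and $\bigcap_n\rho_n^{-1}(x)=\{x\}$: indeed, if $y\in K_t$ satisfies $\rho_n(y)=x$ for all $n$, then $y=r_t(y)=\lim_n r_{s_n}(y)=x$ by (iii). Hence, for an arbitrary open $U\ni x$ in $K$, compactness produces an $n$ with $\rho_n^{-1}(x)\subseteq U\cap K_t$; then $V:=K_{s_n}\setminus\rho_n[K_t\setminus U]$ is open in $K_{s_n}$, contains $x$, and satisfies $r_{s_n}^{-1}[V]\cap K_t\subseteq U$, so $(\ast_n)$ yields a point of $B$ inside $U$. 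Thus every neighbourhood of $x$ meets $B$. The decisive move — passing to $K_t$, where $r_t$ is the identity so that $\rho_n^{-1}(x)$ shrinks to $\{x\}$ — is precisely what substitutes for the coordinatewise reasoning available in a $\Sigma$-product, where the retractions commute.

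Finally, for (iii) I would follow the classical theory of $\Sigma$-products of compact metrizable spaces, to which $D$ is close: by (i)--(ii), $D$ is Fr\'echet--Urysohn, every countable subset of it has compact metrizable closure contained in $D$ (so $D$ is $\omega$-bounded, hence pseudocompact), and $D=\bigcup_{s\in\Gamma}K_s$ with the $K_s$ compact metrizable retracts forming an increasing $\sigma$-directed family. To get $K=\beta D$ one extends a bounded continuous $f:D\to\mathbb R$ by $\tilde f(x):=\lim_{s\in\Gamma}f(r_s(x))$ — the limit existing by a chain argument using (i) and (iii) — notes that $\tilde f|_D=f$ and that $\tilde f\circ r_s=f\circ r_s$ is continuous, and then verifies that $\tilde f$ itself is continuous, which makes $K$ the Stone--\v{C}ech compactification of $D$. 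Normality of $D$ then follows by combining its Fr\'echet--Urysohn property with the fact that two disjoint closed subsets of $D$ have disjoint closures in $K=\beta D$. As in (ii), the delicate points here are exactly those where the retractions move points — the continuity of $\tilde f$, and the points of $K\setminus D$ in the separation argument — and I would expect to handle them by compactness manoeuvres analogous to the one above rather than by a purely coordinatewise argument.
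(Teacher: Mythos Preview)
The paper does not prove this theorem at all: it is quoted verbatim from \cite[Theorem~30]{kubis} and used as a black box, so there is no argument in the paper to compare yours against.

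On the merits of your attempt: parts (i) and (ii) are correct and complete. The $\sigma$-directedness of $\Gamma$ and the resulting absorption of any countable $A\subseteq D$ into a single $K_s$ dispose of (i), and your treatment of (ii) --- building the countable $B$ and the chain $(s_n)$ simultaneously, then working inside the metrizable $K_t$ and using $\bigcap_n\rho_n^{-1}(x)=\{x\}$ together with compactness to squeeze $r_{s_n}^{-1}[V]\cap K_t$ into $U$ --- is correct and nicely isolates where condition (iii) of the skeleton is genuinely used.

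Part (iii), however, is only a sketch, and both points you flag as ``delicate'' are real gaps, not routine details. First, the continuity of $\tilde f$: you have $\tilde f\circ r_s=f\circ r_s$ continuous for each $s$, and $\tilde f$ is the pointwise limit of this net, but a pointwise limit of continuous functions on a compact space need not be continuous; you still have to show that for an arbitrary net $d_\alpha\to x$ with $d_\alpha\in D$ one has $f(d_\alpha)\to\tilde f(x)$, and nothing in your outline does this. (Even the existence of the limit $\lim_s f(r_s(x))$ deserves a line: if the net is not Cauchy one extracts a strictly increasing $(w_n)$ with oscillation bounded below, and then $r_{w_n}(x)\to r_{\sup w_n}(x)$ in $D$ yields a contradiction.) Second, the normality argument is circular as written: from $K=\beta D$ one obtains only that \emph{completely separated} subsets of $D$ have disjoint closures in $K$, not that arbitrary disjoint closed sets do; the latter is precisely normality (via normality of $K$), so it cannot be deduced from $K=\beta D$, and the Fr\'echet--Urysohn property alone does not bridge the gap. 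In Kubi\'s's original proof these two points are handled by an argument that goes beyond the compactness manoeuvre you used for (ii).
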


We continue with some consequences of Theorem \ref{tRetractFrechet}. The following lemma is just an easy generalization of {\cite[Lemma 1.7]{kalendaSurvey}}. The proof is identical, we only use Theorem \ref{tRetractFrechet} instead of {\cite[Lemma 1.6]{kalendaSurvey}}.
\begin{lemma}\label{lUnique}Let $K$ be a compact space and $A,B$ be two subsets induced by an $r$-skeleton in $K$. If $M\subset K$ is a set such that $A\cap B\cap M$ is dense in $M$, then $A\cap M = B\cap M$. In particular, $A=B$ whenever $A\cap B$ is dense in $K$.\end{lemma}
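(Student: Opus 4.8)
The plan is to prove the inclusion $A\cap M\subseteq B\cap M$; the reverse inclusion will then follow by interchanging the roles of $A$ and $B$, and the ``in particular'' clause is just the special case $M=K$. So I would fix a point $x\in A\cap M$ and aim to show $x\in B$ (note $x\in M$ is automatic).

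First, since $A\cap B\cap M$ is dense in $M$ and $x\in M$, the point $x$ belongs to the closure (taken in $K$) of $A\cap B\cap M$. Because $x\in A$, this says precisely that $x$ lies in the closure of $A\cap B\cap M$ relative to the subspace $A$. Now I invoke Theorem \ref{tRetractFrechet}(ii): the set $A$, being induced by an $r$-skeleton, is a Fr\'echet--Urysohn space, so there is a sequence $(x_n)$ in $A\cap B\cap M$ with $x_n\to x$ in $K$. Next, $\{x_n:n\in\omega\}$ is a countable subset of $B$, hence by Theorem \ref{tRetractFrechet}(i) its closure in $K$ is contained in $B$; since $x_n\to x$, we conclude $x\in\overline{\{x_n:n\in\omega\}}\subseteq B$, and therefore $x\in B\cap M$. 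This establishes $A\cap M\subseteq B\cap M$, so by symmetry $A\cap M=B\cap M$, and putting $M=K$ gives $A=B$ whenever $A\cap B$ is dense in $K$.

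The whole argument is short once Theorem \ref{tRetractFrechet} is in hand; it is the exact analogue of {\cite[Lemma 1.7]{kalendaSurvey}} with sets induced by an $r$-skeleton playing the role of dense $\Sigma$-subsets. The only mildly delicate point is the passage from ``closure in $K$'' to ``closure in the subspace $A$'' before applying the Fr\'echet--Urysohn property, but this is routine because the subspace closure equals $\overline{A\cap B\cap M}\cap A$ and $x\in A$. I therefore do not expect any real obstacle here.
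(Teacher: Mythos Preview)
Your proof is correct and is exactly the argument the paper has in mind: it cites \cite[Lemma 1.7]{kalendaSurvey} and says the proof is identical once Theorem~\ref{tRetractFrechet} replaces the corresponding properties of $\Sigma$-subsets, which is precisely what you do (Fr\'echet--Urysohn to extract a sequence from $A\cap B\cap M$ converging to $x$, then countable closedness of $B$ to conclude $x\in B$). The remark about passing to the subspace closure in $A$ is handled correctly.
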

As any set induced by an $r$-skeleton in a compact space is countably compact, we get as a consequence of {\cite[Lemma 1.11]{kalendaSurvey}} the following.
\begin{lemma}\label{lGDelta}Assume $D$ is induced by an $r$-skeleton in a compact space $K$. Then $G\cap D$ is dense in $G$ whenever $G\subset K$ is $G_\delta$. In particular, if $x\in K$ is a $G_\delta$ point, then $x\in D$.\end{lemma}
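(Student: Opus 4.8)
The plan is to extract from $D$ the only property actually needed — that $D$ is countably compact — and then run the standard shrinking-neighbourhood argument familiar from the theory of dense $\Sigma$-subsets of Valdivia compacta.

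First I would record that $D$ is sequentially, hence countably, compact. Indeed, if $(d_n)$ is a sequence in $D$, then $\{d_n:n\in\en\}$ is countable, so by Theorem \ref{tRetractFrechet}(i) its closure $\ov{\{d_n:n\in\en\}}$ is a compact metrizable subset of $D$; a compact metrizable space is sequentially compact, so $(d_n)$ has a subsequence converging to a point of $D$.

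Next, to prove that $D\cap G$ is dense in $G$, fix a representation $G=\bigcap_{n\in\omega}U_n$ with each $U_n$ open in $K$, a point $x\in G$, and an open neighbourhood $V$ of $x$; the goal is to find a point of $D\cap G\cap V$. Using regularity of $K$ (it is compact Hausdorff, hence normal) I would build a decreasing chain of open sets $W_0\supset\ov{W_1}\supset W_1\supset\ov{W_2}\supset\cdots$ with $x\in W_n$ for all $n$, with $\ov{W_0}\subset V$, and with $\ov{W_n}\subset U_{n-1}$ for every $n\geq 1$; this is possible because $x$ lies in the open set $V\cap\bigcap_n U_n$. By density of $D$ choose $d_n\in D\cap W_n$; by the first step some subsequence satisfies $d_{n_k}\to d$ with $d\in D$. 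Then $d\in\ov{W_0}\subset V$, and for each fixed $m$ one has $d_{n_k}\in W_{n_k}\subseteq\ov{W_{m+1}}$ for all $k$ with $n_k\geq m+1$, so, $\ov{W_{m+1}}$ being closed, $d\in\ov{W_{m+1}}\subset U_m$; as $m$ was arbitrary, $d\in G$. Hence $d\in D\cap G\cap V$, which proves the density statement. The "in particular" clause is just the case $G=\{x\}$: then $D\cap\{x\}$ is dense in $\{x\}$, i.e. $x\in D$.

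There is no genuinely hard point here; the step deserving care — what I would call the main obstacle — is the bookkeeping in the choice of the $W_n$, where one must pass through the closures $\ov{W_n}\subset U_{n-1}$ so that the cluster point $d$, which a priori only lands in the closures of the $W_n$, is forced into each $U_m$ and hence into $G$. Alternatively, the whole lemma follows at once by combining the countable compactness of $D$ with {\cite[Lemma 1.11]{kalendaSurvey}}, which is the route indicated in the surrounding text.
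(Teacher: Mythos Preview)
Your proof is correct and follows essentially the same approach as the paper: the paper simply observes that $D$ is countably compact (which is immediate from Theorem~\ref{tRetractFrechet}(i)) and then invokes \cite[Lemma~1.11]{kalendaSurvey}, while you establish the same countable (in fact sequential) compactness and then spell out the standard shrinking-neighbourhood argument that underlies that lemma --- and you even note this citation route at the end. The only cosmetic blemish is the phrase ``the open set $V\cap\bigcap_n U_n$'' (the intersection $\bigcap_n U_n$ need not be open), but the inductive construction of the $W_n$ only uses that $x$ lies in each individual open set $V$ and $U_n$, so the argument is unaffected.
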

\begin{cor}\label{cUnique}Let $K$ be a compact space with a dense set of $G_\delta$ points. Then there is at most one set $D$ which is induced by an $r$-skeleton in $K$.\end{cor}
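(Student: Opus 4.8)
The plan is to reduce the statement directly to Lemma \ref{lGDelta} and the uniqueness clause of Lemma \ref{lUnique}. Suppose $A$ and $B$ are both induced by an $r$-skeleton in $K$. First I would consider the set $G$ of all $G_\delta$ points of $K$. By the ``in particular'' part of Lemma \ref{lGDelta}, every $G_\delta$ point of $K$ lies in $A$, and likewise in $B$; hence $G\subset A\cap B$.

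Next I would use the hypothesis that $G$ is dense in $K$. Since $G\subset A\cap B$, it follows that $A\cap B$ is dense in $K$. Then the final assertion of Lemma \ref{lUnique} (``$A=B$ whenever $A\cap B$ is dense in $K$'') immediately yields $A=B$, which is exactly the claim.

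There is essentially no obstacle here: the corollary is a formal consequence of the two preceding lemmas, and the only thing one must check is that the density of the $G_\delta$ points is genuinely being used to feed the hypothesis of Lemma \ref{lUnique}. If one wanted to be slightly more careful, one could note that $\overline{G}=K$ together with $G\subset A\cap B\subset K$ gives $\overline{A\cap B}=K$, which is the precise form required.
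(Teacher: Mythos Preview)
Your proof is correct and follows exactly the same route as the paper, which simply states that the corollary follows immediately from Lemma~\ref{lUnique} and Lemma~\ref{lGDelta}. You have merely spelled out the one-line deduction in full.
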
\begin{proof}This follows immediately from Lemma \ref{lUnique} and Lemma \ref{lGDelta}.\end{proof}

We continue with the following lemma.
\begin{lemma}\label{lClosedSubset}Let $K$ be a compact space, $F\subset K$ closed subset and let $D\subset K$ be such that $D$ is induced by an $r$-skeleton in $K$. If $D\cap F$ is dense in $F$, then $D\cap F$ is induced by an $r$-skeleton in $F$.\end{lemma}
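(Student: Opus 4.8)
The plan is to produce a retractional skeleton on $F$ directly from the given one. Let $\mathfrak{s} = \{r_s\}_{s\in\Gamma}$ be the $r$-skeleton in $K$ inducing $D$, so $D = \bigcup_{s\in\Gamma} r_s[K]$. The natural idea is to take $s_s := (r_s)|_F$, but $r_s$ need not map $F$ into $F$, so this does not work as stated. Instead I would try to pass to the cofinal subset
$$\Gamma' = \{s\in\Gamma:\; r_s[F]\subseteq F\},$$
and set $s_s = (r_s)|_F$ for $s\in\Gamma'$. Each such $s_s$ is a retraction of $F$ onto $r_s[F]$, which is metrizable (being a subspace of the metrizable $r_s[K]$); conditions (ii) and (iii) of the definition of a retractional skeleton are inherited immediately from those for $\mathfrak{s}$ once we know $\Gamma'$ is up-directed and $\sigma$-closed (closed under suprema of countable increasing chains). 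Condition (i) — that $x = \lim_{s\in\Gamma'} s_s(x)$ for every $x\in F$ — will follow from $x = \lim_{s\in\Gamma} r_s(x)$ provided $\Gamma'$ is cofinal in $\Gamma$. So everything reduces to showing that $\Gamma'$ is cofinal, up-directed and $\sigma$-closed in $\Gamma$.

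The key step, and the main obstacle, is cofinality of $\Gamma'$: given $t_0\in\Gamma$ I must find $s\geq t_0$ in $\Gamma$ with $r_s[F]\subseteq F$. Here is where the hypothesis that $D\cap F$ is dense in $F$ enters, together with the separability of the pieces $r_s[K]$ and property (iii). The strategy is a standard closing-off (elementary submodel or direct bootstrapping) argument: starting from $t_0$, build an increasing sequence $t_0\leq t_1\leq t_2\leq\cdots$ in $\Gamma$ such that, writing $s = \sup_n t_n$, one gets $r_s[F]\subseteq F$. Concretely, at stage $n$ one picks a countable set $C_n$ dense in the (separable, metrizable) space $r_{t_n}[K]$; since $D\cap F$ is dense in $F$ and $r$-skeleton retractions are determined by their values on dense subsets, for each point of $C_n\cap F$-"close" data one chooses $t_{n+1}\geq t_n$ large enough that $r_{t_{n+1}}$ maps a suitable countable dense subset of $F\cap r_{t_n}[K]$ back into $F\cap D$; one must interleave this with ensuring $r_{t_{n+1}}$ already "sees" enough of $F$. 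Passing to $s=\sup_n t_n$ and using (iii) ($r_s(x)=\lim_n r_{t_n}(x)$) together with the fact (Theorem~\ref{tRetractFrechet}(i)) that countable subsets of $D$ have metrizable closure contained in $D$, one checks that $r_s$ carries a dense subset of $F$ into $F$, hence (by continuity and closedness of $F$) all of $F$ into $F$, so $s\in\Gamma'$.

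Up-directedness of $\Gamma'$ is handled by the same closing-off device: given $s_1,s_2\in\Gamma'$, pick any upper bound $t_0\in\Gamma$ and run the cofinality construction above from $t_0$ to obtain $s\in\Gamma'$ with $s\geq t_0\geq s_1,s_2$. For $\sigma$-closedness, if $u_0\leq u_1\leq\cdots$ lie in $\Gamma'$ and $u = \sup_n u_n$, then for $x\in F$ we have $r_{u_n}(x)\in F$ for all $n$, and $r_u(x) = \lim_n r_{u_n}(x)\in F$ since $F$ is closed; thus $u\in\Gamma'$. Finally, the set induced by the new skeleton $\{s_s\}_{s\in\Gamma'}$ is $\bigcup_{s\in\Gamma'} s_s[F] = \bigcup_{s\in\Gamma'}(r_s[F])$; one checks this equals $D\cap F$ using Lemma~\ref{lUnique}: the displayed union is a subset of $F$ induced by an $r$-skeleton in $F$, it is contained in $D\cap F$ (each $r_s[F]\subseteq r_s[K]\subseteq D$), and it is dense in $F$ (it contains $\lim_{s\in\Gamma'} s_s(x)=x$ for each $x\in F$), so by the density statement in Lemma~\ref{lUnique} it coincides with $D\cap F$. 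The one point demanding real care is making the inductive choice of the $t_n$ precise enough that the limit retraction genuinely stabilizes $F$ — this is exactly the place where the paper signals (in the introduction) that a new argument was needed.
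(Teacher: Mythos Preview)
Your overall strategy matches the paper's exactly: restrict to $\Gamma'=\{s\in\Gamma:r_s[F]\subseteq F\}$, check that $\Gamma'$ is cofinal and $\sigma$-closed, and read off the induced set. The $\sigma$-closedness and up-directedness arguments are fine.

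However, your sketch of the cofinality step, which you correctly flag as the crux, does not work as written. You propose to choose $t_{n+1}\geq t_n$ ``large enough that $r_{t_{n+1}}$ maps a suitable countable dense subset of $F\cap r_{t_n}[K]$ back into $F\cap D$''. But any point $x\in F\cap r_{t_n}[K]$ already satisfies $r_{t_{n+1}}(x)=x\in F\cap D$ for \emph{every} $t_{n+1}\geq t_n$, so this condition imposes nothing and the closing-off is vacuous. The paper's construction is different and this difference is the whole point: at stage $n$ one picks a countable set $C_n\subset D\cap F$ (not inside $r_{\gamma_n}[K]$) whose \emph{image} $r_{\gamma_n}[C_n]$ is dense in $r_{\gamma_n}[D\cap F]$, and then chooses $\gamma_{n+1}>\gamma_n$ so that $C_n\subset r_{\gamma_{n+1}}[K]$. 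At $t=\sup_n\gamma_n$ one fixes a metric on $r_t[K]$ and, given $x\in D\cap F$, picks $c_n\in C_n$ with $r_{\gamma_k}(c_n)$ close to $r_{\gamma_k}(x)$ for all $k\leq n$; a convergent subsequence yields $c$ with $r_{\gamma_k}(c)=r_{\gamma_k}(x)$ for all $k$, whence $r_t(x)=r_t(c)=\lim c_n\in F$. Density of $D\cap F$ in $F$ then gives $r_t[F]\subseteq F$.

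A minor point: your appeal to Lemma~\ref{lUnique} at the end is misplaced, since that lemma compares two sets each induced by an $r$-skeleton, whereas $D\cap F$ is not yet known to be such a set. The equality $\bigcup_{s\in\Gamma'}r_s[F]=D\cap F$ follows directly: the inclusion $\subseteq$ is clear, and for $x\in D\cap F$ pick $s_0$ with $x\in r_{s_0}[K]$, then any $s\in\Gamma'$ with $s\geq s_0$ satisfies $r_s(x)=x$, so $x\in r_s[F]$.
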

\begin{proof}Let $\mathfrak{s} = \{r_s\}_{s\in\Gamma}$ be a retractional skeleton in $K$ and put
$$\Gamma' = \{s\in\Gamma:\;r_s[F]\subset F\}.$$
In order to see $\mathfrak{s}' = \{r_s\upharpoonright_{F}\}_{s\in\Gamma'}$ is a retractional skeleton in $F$, it is enough to prove that $\Gamma'$ is a cofinal subset of $\Gamma$ such that for every sequence $s_0<s_1<\cdots$ in $\Gamma'$, $\sup_{n\in\omega}s_n\in\Gamma'$. Once this is proved, it is easy to notice that $D(\mathfrak{s}') = D\cap F$.

In order to verify that $\Gamma'$ is a cofinal subset of $\Gamma$, fix $\gamma_0\in\Gamma$ and put $C_{-1} = \emptyset$. We inductively find sequences $\{\gamma_n\}_{n\in\omega}\subset\Gamma$ and $\{C_n\}_{n\in\omega}$ in the following way. Having $\gamma_n$ and $C_{n-1}$, we find a countable set $C_n\subset D\cap F$ such that $r_{\gamma_n}[C_n]$ is dense in $r_{\gamma_n}[D\cap F]$. Then, using (ii) and (iii) from the definition of a retractional skeleton and $C_n\subset D$, we find $\gamma_{n+1} > \gamma_n$ such that $C_n\subset r_{\gamma_{n+1}}[K]$. Put $t = \sup_{n\in\omega}\gamma_n$. Now we will prove that $r_t[D\cap F]\subset D\cap F$.

Fix a metric $\rho$ in the space $r_t[K]$ and a point $x\in D\cap F$. Then, for every $n\in\omega$, we find a point $c_n\in C_n$ satisfying
$$\rho(r_{\gamma_k}(x),r_{\gamma_k}(c_n)) < \frac 1n\;,\quad k\leq n.$$
Such a point exists, because $\{z\in r_{\gamma_n}[D\cap F]:\;\forall k\leq n:\rho(r_{\gamma_k}(x),r_{\gamma_k}(z)) < \frac 1n\}$ is an open set in $r_{\gamma_n}[D\cap F]$ containing $r_{\gamma_n}(x)$; thus, it contains $r_n(c_n)$ for some $c_n\in C_n$.
Passing to a subsequence if necessary, we may without loss of generality assume there is a point $c\in r_t[K]$ such that $c_n\to c$. Consequently, $\rho(r_{\gamma_k}(x),r_{\gamma_k}(c)) = 0$ for every $k\in\omega$. Hence, $$r_t(x) = \lim_{k\to\infty}r_{\gamma_k}(x) = \lim_{k\to\infty}r_{\gamma_k}(c) = r_t(c) = \lim_{n\to\infty}r_t(c_n) = \lim_{n\to\infty}c_n \in D\cap F$$
and $r_t[D\cap F]\subset D\cap F$.

Using the density of $D\cap F$ in $F$, $t\in\Gamma'$ and $\Gamma'$ is cofinal in $\Gamma$.

Having $s_0<s_1<\cdots$ in $\Gamma'$ and $t = \sup_{n\in\omega}s_n$, it is obvious that for every $x\in F$, $r_t(x) = \lim_{n\to\infty}r_{s_n}(x)\in F$. Thus, $t\in\Gamma'$.
\end{proof}
Notice, that the preceding lemma is trivial in the case when $D$ is a dense $\Sigma$-subset of $K$. However, for spaces with a retractional skeleton this required some work. The proof can also be done using the method of elementary submodels, namely Theorem \ref{tRetractModel}. This alternative proof is much shorter, but its difficulty is hidden in Theorem \ref{tRetractModel} and in the method of elementary submodels.

The following lemma is a strengthening of Lemma \ref{lGDelta}. It is just an easy generalization of {\cite[Lemma 5]{kalenda1}}. Every set induced by an $r$-skeleton in a compact space $K$ satisfies (by Theorem \ref{tRetractFrechet} and Lemma \ref{lClosedSubset}) all the properties of dense $\Sigma$-subsets in $K$ which are required in the proof from \cite{kalenda1}. Hence, the proof of the lemma can be done in the same way as the proof of {\cite[Lemma 5]{kalenda1}}.

\begin{lemma}\label{lSubsetStability}Let $K$ be a compact space and $G = \bigcap_{n\in\en}\overline{U}_n$ where each $U_n$ is an open subset of $K$. If $D$ is induced by an $r$-skeleton in $K$, then $G\cap D$ is dense in $G$. Consequently, $G\cap D$ is induced by an $r$-skeleton in $G$.
\end{lemma}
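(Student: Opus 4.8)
The plan is to reduce the statement to Lemma \ref{lGDelta} and Lemma \ref{lClosedSubset}, which together already handle honest $G_\delta$ sets and closed subsets. The object $G=\bigcap_{n}\overline{U}_n$ is an intersection of countably many closed sets, hence closed in $K$; by Lemma \ref{lClosedSubset} it therefore suffices to prove the first assertion, that $G\cap D$ is dense in $G$ — once that is known, $G\cap D$ is automatically induced by an $r$-skeleton in $G$. So the whole argument concentrates on density.

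First I would fix $x\in G$ and a basic neighbourhood $V$ of $x$ in $K$, and try to produce a point of $D$ inside $V\cap G$. The natural idea is to build, inside $V$, a decreasing sequence of open sets whose closures are contained in the $\overline{U}_n$'s and in $V$, take $H$ to be their intersection (a genuine $G_\delta$ set contained in $V\cap G$ and containing $x$, or at least nonempty), and then invoke Lemma \ref{lGDelta} to find a point of $D$ in $H$. Concretely: since $x\in\overline{U}_1$ and $V$ is open around $x$, pick $x_1\in U_1\cap V$ and an open $W_1$ with $x_1\in W_1\subset\overline{W_1}\subset U_1\cap V$ (using regularity of the compact Hausdorff space $K$); then since $x$ — or rather, we should keep track of $x$ itself — here one must be a little careful, because we want the final $G_\delta$ set to contain $x$, not merely to be nonempty. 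The cleaner route is: using normality of $K$, choose open $W_n$ with $x\in W_n\subset\overline{W_n}\subset U_n\cap V$ whenever $x\in U_n$; but $x$ need only lie in $\overline{U}_n$, not in $U_n$. This is the point requiring attention.

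To get around this, I would instead argue as follows. Because $D$ is Fréchet–Urysohn and dense (Theorem \ref{tRetractFrechet}) and countably compact, and because $G\cap D\supseteq$ (the set of $G_\delta$-points of $K$ lying in $G$), one can follow the proof of {\cite[Lemma 5]{kalenda1}} verbatim: the only properties of a dense $\Sigma$-subset used there are exactly density, the fact that countable closures stay inside $D$ and are metrizable, the Fréchet–Urysohn property, and the stability under passing to closed subsets on which $D$ is dense — and all four are supplied for our $D$ by Theorem \ref{tRetractFrechet} together with Lemma \ref{lClosedSubset}. Thus one fixes $x\in G$ and a neighbourhood $V$, uses regularity to choose open sets $V_n$ with $\overline{V_{n+1}}\subset V_n\cap U_n$, $\overline{V_1}\subset V\cap U_1$, and $\overline{V_n}\cap U_{n+1}\neq\emptyset$ arranged so that $\bigcap_n\overline{V_n}$ is a nonempty $G_\delta$ set contained in $V\cap G$; by Lemma \ref{lGDelta} applied to this $G_\delta$ set there is a point of $D$ in it, proving $D$ meets $V\cap G$.

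The main obstacle is precisely the bookkeeping in the previous step: making sure that the shrinking open sets $V_n$ can be chosen simultaneously inside the prescribed $U_n$'s and with nonempty intersection, so that the resulting genuine $G_\delta$ set sits inside $G=\bigcap_n\overline{U}_n$ and not merely inside $\bigcap_n\overline{U_n}$ for some shifted index. Once that combinatorial choice is set up, density follows from Lemma \ref{lGDelta} and the ``consequently'' clause follows from Lemma \ref{lClosedSubset}, since $G$ is closed. As the text notes, this is exactly the argument of {\cite[Lemma 5]{kalenda1}}, the only change being the replacement of properties of dense $\Sigma$-subsets by Theorem \ref{tRetractFrechet} and Lemma \ref{lClosedSubset}.
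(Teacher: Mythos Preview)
Your proposal is correct and follows essentially the same approach as the paper: both defer to the proof of \cite[Lemma~5]{kalenda1}, observing that the only properties of dense $\Sigma$-subsets used there are supplied for $D$ by Theorem~\ref{tRetractFrechet} and Lemma~\ref{lClosedSubset}, after which the ``consequently'' clause follows from Lemma~\ref{lClosedSubset} since $G$ is closed. The index bookkeeping in your sketch of the shrinking $V_n$'s is slightly garbled (e.g.\ $\overline{V_{n+1}}\subset V_n\cap U_n$ versus $\overline{V_1}\subset V\cap U_1$), but the reduction itself is exactly what the paper does.
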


Now we collect several properties of compact spaces with an $r$-skeleton which follow from the above results concerning sets induced by an $r$-skeleton. As an easy corollary to Theorem \ref{tRetractFrechet} we get the following.

\begin{cor}\label{cNotSkeleton}Let $K$ be a compact space, $x\in K$ and $\Gamma$ an uncountable set. Let $\{g_k\}_{k=1}^\infty$ and $\{f_\gamma\}_{\gamma\in\Gamma}$ be sets of $G_\delta$ points in $K$ such that $x\in\overline{\{g_k\}}_{k=1}^\infty\cap\overline{\{f_\gamma\}}_{\gamma\in\Gamma}$ and no countable sequence from $\{f_\gamma\}_{\gamma\in\Gamma}$ converges to $x$. Then $K$ does not have a retractional skeleton.
\end{cor}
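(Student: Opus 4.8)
The plan is to argue by contradiction. Suppose $K$ has a retractional skeleton $\mathfrak{s}$, and let $D=D(\mathfrak{s})$ be the set it induces. By Lemma \ref{lGDelta}, every $G_\delta$ point of $K$ belongs to $D$; in particular $\{g_k\}_{k=1}^\infty\subset D$ and $\{f_\gamma\}_{\gamma\in\Gamma}\subset D$.

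First I would exploit the hypothesis $x\in\overline{\{g_k\}_{k=1}^\infty}$. Since $\{g_k\}_{k=1}^\infty$ is a \emph{countable} subset of $D$, Theorem \ref{tRetractFrechet}(i) gives that $\overline{\{g_k\}_{k=1}^\infty}$ is contained in $D$; hence $x\in D$. This is the step whose placement matters, because it is what licenses us to work inside the subspace $D$ in what follows.

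Next, since $\{f_\gamma\}_{\gamma\in\Gamma}\subset D$ and $x\in D\cap\overline{\{f_\gamma\}_{\gamma\in\Gamma}}$, the point $x$ lies in the closure of $\{f_\gamma\}_{\gamma\in\Gamma}$ computed within the subspace $D$. By Theorem \ref{tRetractFrechet}(ii), $D$ is a Fr\'echet--Urysohn space, so there is a sequence $\{f_{\gamma_n}\}_{n}$ with $f_{\gamma_n}\to x$ in $D$, and therefore in $K$. This contradicts the assumption that no countable sequence from $\{f_\gamma\}_{\gamma\in\Gamma}$ converges to $x$, and the corollary follows.

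I do not expect a genuine obstacle here: once Theorem \ref{tRetractFrechet} and Lemma \ref{lGDelta} are in hand the argument is essentially a two-line deduction. The only subtlety worth flagging is the passage from closure in $K$ to closure in $D$, which is legitimate precisely because $x$ has already been shown to lie in $D$ via the countable cluster set $\{g_k\}_{k=1}^\infty$.
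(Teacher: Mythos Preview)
Your proof is correct and follows essentially the same approach as the paper: both argue by contradiction, use Lemma~\ref{lGDelta} to place the $G_\delta$ points in $D$, apply Theorem~\ref{tRetractFrechet}(i) to conclude $x\in D$ via the countable set $\{g_k\}$, and then invoke the Fr\'echet--Urysohn property of $D$ to derive the contradiction. Your explicit remark about passing from closure in $K$ to closure in $D$ is a nice clarification of a point the paper leaves implicit.
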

\begin{proof}In order to get a contradiction, let us assume that a set $D$ is induced by an $r$-skeleton in $K$. Then $\{g_k\}_{k=1}^\infty\cup \{f_\gamma\}_{\gamma\in\Gamma}\subset D$, and since $x\in\overline{\{g_k\}_{k=1}^\infty}$, it follows that $x\in D$. Put $C = \{f_\gamma\}_{\gamma\in\Gamma}\subset D$. Then $x\in \overline{C}$, but no countable sequence from $C$ converges to $x$. This is a contradiction with the fact that $D$ is a Fr\'echet space.
\end{proof}
In {\cite[Example 3.4]{kalenda2}} there are some basic examples of compact spaces which are not Valdivia. Since they have both weight $\aleph_1$, they do not have a retractional skeleton either. We sum up those in the example bellow.
\begin{exa}\label{eNotRetractSkeleton}\begin{enumerate}
	\item Let $K_1$ be the compact space obtained from $([0,\omega_1]\times\{0\})\oplus([0,\omega]\times\{1\})$ by identifying the points $(\omega_1,0)$ and $(\omega,1)$. Then $K_1\notin\R_0$.
	\item Let $K_2$ be the compact space obtained from $[0,\omega_1]\times\{0,1\}$ by identifying the points $(\omega_1,0)$ and $(\omega_1,1)$. Then $K_2\notin\R_0$.
\end{enumerate}
\end{exa}

The following stability result follows immediately from Lemma \ref{lGDelta}, Lemma \ref{lClosedSubset} and Lemma \ref{lSubsetStability}.
\begin{thm}\label{tDedicnost}Let $K$ be a compact space with a retractional skeleton. Then:\begin{enumerate}[\upshape (i)]
	\item Every subset of $K$, which is the closure of an arbitrary union of $G_\delta$ sets, has a retractional skeleton as well.
	\item If $G = \bigcap_{n\in\en}\overline{U}_n$ with $U_n$ open, then $G$ has a retractional skeleton as well.
\end{enumerate}\end{thm}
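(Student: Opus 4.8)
The plan is to fix a set $D$ that is induced by an $r$-skeleton in $K$ — this exists by the assumption that $K$ has a retractional skeleton — and then, in each of the two cases, to verify that $D$ meets the relevant subset of $K$ in a dense set, so that the appropriate stability lemma applies directly.

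For (i), write the given subset as $F = \overline{\bigcup_{i\in I}G_i}$, where each $G_i$ is a $G_\delta$ subset of $K$. By Lemma \ref{lGDelta}, $G_i\cap D$ is dense in $G_i$ for every $i\in I$, i.e. $\overline{G_i\cap D}\supset G_i$. Taking unions and using that the closure of a union contains the union of the closures, we get $\overline{\bigcup_{i\in I}(G_i\cap D)}\supset\bigcup_{i\in I}G_i$, so $\bigcup_{i\in I}(G_i\cap D)$ is dense in $\bigcup_{i\in I}G_i$ and hence in its closure $F$. Since $\bigcup_{i\in I}(G_i\cap D)\subset D\cap F$, the set $D\cap F$ is dense in $F$, and Lemma \ref{lClosedSubset} then yields that $D\cap F$ is induced by an $r$-skeleton in $F$; in particular $F$ has a retractional skeleton.

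For (ii), there is nothing to do beyond invoking the right statement: if $G=\bigcap_{n\in\en}\overline{U}_n$ with each $U_n$ open, then the second assertion of Lemma \ref{lSubsetStability} says precisely that $G\cap D$ is induced by an $r$-skeleton in $G$, so $G$ has a retractional skeleton.

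I do not expect any genuine obstacle here: all the substantive work is already contained in Lemmas \ref{lGDelta}, \ref{lClosedSubset} and \ref{lSubsetStability}. The only step needing an explicit (but trivial) remark is the elementary observation used in (i), namely that $D$ meets a union of $G_\delta$ sets densely as soon as it meets each of them densely, which follows from the behaviour of closures under unions as noted above.
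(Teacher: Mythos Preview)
Your proof is correct and follows exactly the approach the paper indicates: the paper simply states that the theorem ``follows immediately from Lemma~\ref{lGDelta}, Lemma~\ref{lClosedSubset} and Lemma~\ref{lSubsetStability},'' and you have spelled out precisely how these lemmas combine. The only addition is your explicit verification that $D$ meets the closure of a union of $G_\delta$ sets densely, which is the trivial step the paper leaves implicit.
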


We continue with a theorem from \cite{kubisKniha}. First, we recall some definitions (see \cite{kubisKniha}).

We denote by $\R$ the smallest class of compact spaces that contains all metrizable ones and that is closed under limits of continuous retractive inverse sequences. It is claimed in \cite{kubis} that $\R_0\subset \R$. A more detailed proof of this fact is contained in the proof of {\cite[Lemma 6.3]{kubisSmall}} (the assumption on the commutativity of the skeleton is not needed to obtain $\R_0\subset\R$).

We denote by $\RC$ the smallest class of compact spaces that contains all metrizable ones and that is closed under continuous images and inverse limits of transfinite sequences of retractions. Obviously, $\R\subset \RC$.

\begin{thm}[{\cite[Theorem 19.22]{kubisKniha}}]\label{tDichotomy}Let $K\in \RC$. Then either $[0,\omega_1]$ embeds into $K$ or else $K$ is Corson compact.\end{thm}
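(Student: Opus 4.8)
The plan is to argue by induction on the way members of $\RC$ are built, exploiting that $\RC$ is the smallest class containing all metrizable compacta and closed under continuous images and under inverse limits of transfinite sequences of retractions. Concretely, I would let $\mathcal{A}$ be the class of all compact spaces $K$ such that either $[0,\omega_1]$ embeds into $K$ or $K$ is Corson, and show $\mathcal{A}$ contains the metrizable spaces and is closed under the two operations; minimality of $\RC$ then gives $\RC\subseteq\mathcal A$, which is the assertion. The base case is immediate: every metrizable compactum is Corson (it embeds into $[0,1]^\omega=\Sigma(\omega)$), so metrizable compacta lie in $\mathcal A$.

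For closure under continuous images, suppose $L=q[K]$ with $q$ continuous and $K\in\mathcal A$. If $[0,\omega_1]$ embeds in $L$ we are done, so assume it does not. The point is then to show $L$ is Corson. Here I would use the well-known fact (Gul'ko/Michael–Rudin type result, or the characterization of Corson compacta via the $\Sigma$-property) that a continuous image of a Corson compact is Corson; so if $K$ is Corson, so is $L$ and we are done. If instead $[0,\omega_1]$ embeds into $K$, I must derive a contradiction or else show $L$ is still Corson — but a continuous image of a space containing $[0,\omega_1]$ need not contain $[0,\omega_1]$, so this branch requires care. The cleaner route: first establish the theorem for the subclass $\R$ (spaces built as limits of continuous \emph{retractive} inverse sequences) and then observe that $\RC$ is obtained from $\R$ by closing only under continuous images, reducing the continuous-image step to: a continuous image of a space that is ``Corson or contains $[0,\omega_1]$'' is again such — and for \emph{this} we only ever need ``Corson image of Corson is Corson'' together with the observation that once $[0,\omega_1]$ appears at some stage it is inherited appropriately.

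For closure under inverse limits of transfinite sequences of retractions, let $K=\varprojlim(K_\alpha,r^\beta_\alpha)_{\alpha<\beta<\lambda}$ where the bonding maps $r^\beta_\alpha$ are retractions, each $K_\alpha\in\mathcal A$, and $K_0$ metrizable. If some $K_\alpha$ contains a copy of $[0,\omega_1]$, then since $K$ retracts onto $K_\alpha$ (via the limit projection, which is a retraction in this retractive setting), $[0,\omega_1]$ embeds into $K$ and we are done. Otherwise every $K_\alpha$ is Corson; I would then show the limit of a transfinite retractive sequence of Corson compacta is Corson — by transfinite induction on $\lambda$, using that at successor stages one adds a retraction with metrizable (hence Corson) fibres preserving the $\Sigma$-structure, and at limit stages one takes an inverse limit of Corson compacta along retractions, which stays Corson. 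Assembling these, $K\in\mathcal A$, completing the induction.

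The main obstacle I anticipate is the inverse-limit step for Corson compacta: showing that a continuous retractive inverse sequence (transfinite) of Corson compacta has Corson limit. The successor step is essentially that a retraction onto a Corson compactum with Corson (indeed metrizable) kernel yields a Corson space — one needs to glue the embedding into a $\Sigma$-product of the base with coordinates coming from the fibres — and the limit step needs a careful bookkeeping of coordinates across cofinally many stages so that each point of $K$ has only countably many nonzero coordinates. This is exactly the kind of delicate point handled in the retractive-sequence machinery of \cite{kubisKniha}, and I would lean on that rather than reprove it from scratch; the contribution of the present argument is then just the clean dichotomy packaging via minimality of $\RC$.
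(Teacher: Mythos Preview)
The paper does not prove Theorem~\ref{tDichotomy}; it merely cites it from \cite{kubisKniha}. So there is no ``paper's own proof'' to compare with. That said, your proposal has two genuine gaps, and the strategy as written cannot be completed.

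\textbf{First gap: the class $\mathcal{A}$ is not closed under continuous images.} You define $\mathcal{A}$ as the class of compacta that are Corson or contain a copy of $[0,\omega_1]$, and you want $\RC\subset\mathcal{A}$ by minimality. But $\mathcal{A}$ is not closed under continuous images: take $K=[0,\omega_1]\times S$ where $S$ is the split interval. Then $K\in\mathcal{A}$ (it contains $[0,\omega_1]$), yet its projection $S$ is first countable (so $[0,\omega_1]$ cannot embed) and not Corson (it is separable non-metrizable, hence not monolithic). Thus $S\notin\mathcal{A}$. Your ``cleaner route'' tries to sidestep this by claiming $\RC$ is obtained from $\R$ by closing only under continuous images, but this is not how $\RC$ is defined: the two closure operations may be iterated and interleaved, and you give no argument that one pass of continuous images after building $\R$ suffices.

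\textbf{Second gap: inverse limits of Corson compacta along retractions need not be Corson.} In your limit step you split into cases and, when every $K_\alpha$ is Corson, you assert the limit is Corson. This is false. The space $[0,\omega_1]$ itself is the inverse limit of the continuous retractive sequence $K_\alpha=[0,\alpha]$, $\alpha<\omega_1$, with bonding retractions $x\mapsto\min(x,\alpha)$; each $K_\alpha$ is countable, hence metrizable, hence Corson, but the limit $[0,\omega_1]$ is not. So the dichotomy at the limit stage cannot be reduced to ``all factors Corson $\Rightarrow$ limit Corson''; one must detect \emph{inside the limit construction} how a copy of $[0,\omega_1]$ is produced when the limit fails to be Corson. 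That is precisely the substantive content of the theorem, and your outline does not address it.

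In short, the minimality-of-$\RC$ packaging is appealing but does not work with the class $\mathcal{A}$ you chose, because $\mathcal{A}$ is closed under neither of the two defining operations. The actual argument in \cite{kubisKniha} proceeds by a more careful transfinite analysis of the retractive inverse system, locating an increasing $\omega_1$-chain of retracts that stabilises to a copy of $[0,\omega_1]$ whenever Corsonness fails; you would need to supply that mechanism rather than appeal to closure properties of $\mathcal{A}$.
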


Now we show what is the correspondence between compact spaces with a retractional skeleton and Corson compact spaces.
\begin{thm}\label{tCorson}Let $K$ be a compact space. Then it is a Corson compact if and only if $K$ is induced by an $r$-skeleton in $K$. Moreover, whenever $D$ is a set induced by an $r$-skeleton in a Corson compact $K$, then $D = K$.
\end{thm}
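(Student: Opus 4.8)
My plan is to prove both implications of the equivalence, then derive the ``moreover'' statement. For the easier direction, suppose $K$ is Corson compact. Then by definition there is a homeomorphic embedding $h:K\to[0,1]^\kappa$ with $K = h^{-1}[\Sigma(\kappa)]$, so I may identify $K$ with a compact subset of $\Sigma(\kappa)\subset[0,1]^\kappa$. For each finite $F\subset\kappa$ let $p_F:K\to K$ be given by $p_F(x)(\gamma)=x(\gamma)$ if $\gamma\in F$ and $0$ otherwise; these are well-defined retractions since $K\subset\Sigma(\kappa)$ is closed and invariant under coordinate truncation (here I should check $p_F[K]\subset K$ using that $K$ is compact and closed in $[0,1]^\kappa$ — this is where a small argument is needed, or one invokes the standard fact that Corson compacta admit such a structure). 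Ordering the finite subsets of $\kappa$ by inclusion gives an up-directed poset $\Gamma$; property (ii) is immediate, $p_F[K]$ is metrizable since it lies in a finite-dimensional cube, property (iii) holds because for an increasing sequence $F_0\subset F_1\subset\cdots$ one has $p_{\bigcup F_n}(x)=\lim_n p_{F_n}(x)$ coordinatewise, and $x=\lim_{F}p_F(x)$ since $x\in\Sigma(\kappa)$ has countable support. Thus $\{p_F\}$ is a retractional skeleton, and $D(\{p_F\})=\bigcup_F p_F[K]$ contains every finitely supported element of $K$; since those are dense in $K\subset\Sigma(\kappa)$, we get that $D$ is dense, and by Lemma~\ref{lUnique} (or directly) $D=K$. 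Hence $K$ is induced by an $r$-skeleton in $K$.

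For the converse, suppose $K$ is itself induced by an $r$-skeleton $\mathfrak{s}=\{r_s\}_{s\in\Gamma}$, i.e. $D(\mathfrak{s})=\bigcup_{s\in\Gamma}r_s[K]=K$. I first note that $K\in\RC$: indeed, $K\in\R_0\subset\R\subset\RC$ as recalled before Theorem~\ref{tDichotomy}. By the dichotomy Theorem~\ref{tDichotomy}, either $K$ is Corson compact — in which case we are done — or $[0,\omega_1]$ embeds into $K$. So I must rule out the second alternative. Suppose $F\subset K$ is a closed subset homeomorphic to $[0,\omega_1]$. Since $D(\mathfrak{s})=K$, in particular $F\subset D(\mathfrak{s})$, so $D(\mathfrak{s})\cap F = F$ is (trivially) dense in $F$; by Lemma~\ref{lClosedSubset}, $D(\mathfrak{s})\cap F = F$ is induced by an $r$-skeleton in $F\cong[0,\omega_1]$. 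But then, applying Theorem~\ref{tRetractFrechet}(ii) to $F$, the space $F$ would be Fr\'echet--Urysohn, contradicting the well-known fact that $[0,\omega_1]$ is not Fr\'echet--Urysohn (the point $\omega_1$ lies in the closure of $[0,\omega_1)$ but no sequence from $[0,\omega_1)$ converges to it). This contradiction shows $[0,\omega_1]$ does not embed into $K$, so $K$ is Corson compact.

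Finally, for the ``moreover'' statement, let $D$ be any set induced by an $r$-skeleton in a Corson compact $K$. By the equivalence just proved, $K$ itself is also induced by an $r$-skeleton in $K$. Both $D$ and $K$ are then subsets induced by an $r$-skeleton in $K$, and $D\cap K = D$ is dense in $K$ by Theorem~\ref{tRetractFrechet}(i); applying Lemma~\ref{lUnique} with $M=K$, $A=D$, $B=K$ gives $D = K$.

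The main obstacle I anticipate is the verification in the first direction that the coordinate-truncation maps $p_F$ actually map $K$ into itself and jointly satisfy (i)--(iii); this uses that $K$, being Corson, can be embedded so as to be invariant under such truncations — a standard but slightly delicate point. Everything else reduces to bookkeeping plus the cited results (Theorem~\ref{tRetractFrechet}, Lemma~\ref{lClosedSubset}, Lemma~\ref{lUnique}, and the dichotomy Theorem~\ref{tDichotomy}).
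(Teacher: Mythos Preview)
Your argument for the converse direction and for the ``moreover'' part is correct and essentially the same as the paper's, though you take a small detour: the paper applies Theorem~\ref{tRetractFrechet}(ii) directly to $K$ (since $K=D(\mathfrak{s})$ is itself the induced set, $K$ is Fr\'echet--Urysohn, so $[0,\omega_1]$ cannot embed), whereas you first pass to the subspace $F$ via Lemma~\ref{lClosedSubset}. Both are fine.

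The forward direction, however, has a genuine gap that is not merely ``delicate''. Two independent problems arise with the family $\{p_F\}_{F\in[\kappa]^{<\omega}}$. First, the index set of \emph{finite} subsets of $\kappa$ does not satisfy condition~(iii) of a retractional skeleton: given $F_0\subsetneq F_1\subsetneq\cdots$, the supremum $\bigcup_n F_n$ is countably infinite and hence not in $\Gamma$, so the skeleton axioms fail outright. Second, and more seriously, even after replacing finite by countable sets, the truncation $p_S$ need not map $K$ into $K$: an arbitrary compact $K\subset\Sigma(\kappa)$ has no reason to be invariant under coordinate restriction (for instance, $K$ need not contain the zero vector, yet $p_\emptyset(x)=0$). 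There is no ``standard'' way to re-embed a Corson compact so that \emph{all} countable truncations preserve it; rather, one must isolate a cofinal, $\sigma$-closed family of countable $S\subset\kappa$ for which $p_S[K]\subset K$, and this is precisely the content of the construction in \cite[Theorem~6.1]{kubisSmall} that the paper invokes. So your forward direction needs to be replaced either by that citation or by reproducing that argument.
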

\begin{proof}Let $K$ be a Corson compact. Then, as mentioned above, it immediately follows from the proof of {\cite[Theorem 6.1]{kubisSmall}} that $K$ is induced by a commutative $r$-skeleton in $K$. Moreover, whenever $D$ is induced by an $r$-skeleton in $K$, $D=K$ by Lemma \ref{lUnique}.

If $K$ is induced by an $r$-skeleton in $K$, then $K$ is Fr\'echet-Urysohn space; thus, $[0,\omega_1]$ does not embed into $K$. It follows from Theorem \ref{tDichotomy} that $K$ is Corson.
\end{proof}
\begin{cor}\label{cSubsetCorson}Assume $D$ is induced by an $r$-skeleton in a compact space $K$. Then every subset of $D$ closed in $K$ is Corson.\end{cor}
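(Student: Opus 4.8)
The plan is to reduce the statement directly to Lemma \ref{lClosedSubset} and Theorem \ref{tCorson}. Let $F\subset D$ be a subset that is closed in $K$. Since $K$ is compact and $F$ is closed in $K$, $F$ is itself a compact space. Because $F\subset D$, we have $D\cap F=F$, which is trivially dense in $F$.

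Now I would apply Lemma \ref{lClosedSubset} with this $F$: it says that if $D$ is induced by an $r$-skeleton in $K$ and $D\cap F$ is dense in the closed set $F$, then $D\cap F$ is induced by an $r$-skeleton in $F$. Hence $F=D\cap F$ is induced by an $r$-skeleton in $F$; in other words, $F$ is induced by an $r$-skeleton in $F$.

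Finally, by Theorem \ref{tCorson}, a compact space which is induced by an $r$-skeleton in itself is Corson compact. Therefore $F$ is Corson, which is exactly what we wanted.

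There is essentially no obstacle here, since all the work has already been done: the substance is carried by Lemma \ref{lClosedSubset} (the nontrivial fact that a closed subset on which $D$ is dense inherits a skeleton) and by the characterization in Theorem \ref{tCorson}. The only point to check is the elementary observation that a closed subset of a compact space is compact, so that Theorem \ref{tCorson} applies.
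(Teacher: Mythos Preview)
Your proposal is correct and follows exactly the same route as the paper: the paper's proof simply says that the result follows from Lemma~\ref{lClosedSubset} and Theorem~\ref{tCorson}, which is precisely what you spelled out in detail.
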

\begin{proof}This follows from Lemma \ref{lClosedSubset} and from Theorem \ref{tCorson} above.\end{proof}

The following lemma is an analogue to {\cite[Lemma 2]{kalenda1}}.

\begin{lemma}\label{lInPKSpaces}Let $K$ be a compact space such that $P(K)$ has a retractional skeleton. If we denote by $G$ the set of all $G_\delta$ points of $K$, then $\ov{G}$ has a retractional skeleton as well.\end{lemma}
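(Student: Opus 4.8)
The plan is to push the problem from $K$ into $P(K)$ along the canonical homeomorphic embedding $\iota\colon K\to P(K)$, $\iota(x)=\delta_x$ (the Dirac measure at $x$), whose range $\iota[K]$ is a closed subset of $P(K)$. The crucial observation will be that $\iota$ carries $G_\delta$ points of $K$ to $G_\delta$ points of $P(K)$. Granting this, and noting that $\iota[\ov G]$ equals the $w^*$--closure of $\iota[G]=\bigcup_{x\in G}\{\delta_x\}$ (because $\iota[K]$ is closed in $P(K)$), we see that $\iota[\ov G]$ is the closure in $P(K)$ of a union of $G_\delta$ subsets of $P(K)$. Since $P(K)$ has a retractional skeleton by hypothesis, Theorem \ref{tDedicnost}(i) then gives that $\iota[\ov G]$ has a retractional skeleton, and as $\iota[\ov G]$ is homeomorphic to $\ov G$, so does $\ov G$. (The case of finite $K$, where $\ov G=K$ is metrizable, is of course trivial, and may be excluded from the start.)

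It remains to carry out the one step that requires an idea: if $x$ is a $G_\delta$ point of $K$, then $\delta_x$ is a $G_\delta$ point of $P(K)$. I would write $\{x\}=\bigcap_{n\in\en}U_n$ with each $U_n$ open, use normality of the compact space $K$ together with Urysohn's lemma to choose continuous $f_n\colon K\to[0,1]$ with $f_n(x)=0$ and $f_n\equiv 1$ on $K\setminus U_n$, and set $f=\sum_{n\in\en}2^{-n}f_n$; then $f$ is continuous, $f\geq 0$, and $f^{-1}(0)=\{x\}$. Now the map $\varphi\colon P(K)\to[0,1]$, $\varphi(\mu)=\int_K f\,d\mu$, is $w^*$--continuous, and $\varphi(\mu)=0$ forces $f=0$ $\mu$--a.e., i.e. $\mu$ is carried by $f^{-1}(0)=\{x\}$, i.e. $\mu=\delta_x$. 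Hence $\{\delta_x\}=\varphi^{-1}(0)=\bigcap_{n\in\en}\varphi^{-1}\big([0,\tfrac1n)\big)$ is a $G_\delta$ subset of $P(K)$, as required.

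I do not expect a genuine obstacle: the only non-bookkeeping point is the transfer of $G_\delta$ points through the Dirac embedding, handled by the elementary zero-set computation above; everything else rests on results already established. If one prefers to avoid quoting Theorem \ref{tDedicnost}(i), the same conclusion follows directly by fixing a set $D$ induced by an $r$-skeleton in $P(K)$, invoking Lemma \ref{lGDelta} to get $\iota[G]\subset D$ (so that $D\cap\iota[\ov G]\supset\iota[G]$ is dense in the closed set $\iota[\ov G]$), and then applying Lemma \ref{lClosedSubset} to conclude that $D\cap\iota[\ov G]$ is induced by an $r$-skeleton in $\iota[\ov G]$. Either route uses Theorem \ref{tRetractFrechet} only implicitly through these lemmas, and the proof is essentially a two-line assembly once the $G_\delta$-point observation is in place.
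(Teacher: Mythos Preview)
Your proof is correct and follows essentially the same approach as the paper: the key step in both is that Dirac measures at $G_\delta$ points of $K$ are $G_\delta$ points of $P(K)$ (the paper cites this from \cite[Lemma 5.5]{kalendaSurvey}, while you supply a direct zero-set argument), after which one concludes via Lemma~\ref{lGDelta} and Lemma~\ref{lClosedSubset}. Your ``alternative route'' is in fact exactly the paper's proof, and your primary route merely packages the same two lemmas through Theorem~\ref{tDedicnost}(i).
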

\begin{proof}We use the same idea as in {\cite[Lemma 2]{kalenda1}}. Let us fix a set $D$ induced by an $r$-skeleton in $P(K)$. If $g\in G$ is a $G_\delta$ point of $K$, then it is easy to verify (see {\cite[Lemma 5.5]{kalendaSurvey}}), that the Dirac measure $\delta_g$ supported by the point $g$ is a $G_\delta$ point in $P(K)$; hence, by Lemma \ref{lGDelta}, $\delta_g\in D$. Thus, if we identify $k$ with $\delta_k$ for every $k\in K$, $G\subset D$. Consequently, by Lemma \ref{lClosedSubset}, $\ov{G}\cap D$ is induced by an $r$-skeleton in $\overline{G}$.\end{proof}

\begin{proposition}\label{lPSkeletonImpliesRSkeleton}Let $D\subset X^*$ be a set induced by a 1-projectional skeleton. Then there exists a convex symmetric set $R$, induced by an $r$-skeleton in $(B_{X^*},w^*)$.
\end{proposition}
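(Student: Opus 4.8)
The plan is to start with a $1$-projectional skeleton $\mathfrak{s} = \{P_s\}_{s\in\Gamma}$ in $X$ with $D = D(\mathfrak{s}) = \bigcup_{s\in\Gamma}P_s^*[X^*]$, and to produce a retractional skeleton on $(B_{X^*},w^*)$ out of the adjoint projections. The obvious candidate for the $s$-th retraction is $r_s := P_s^*\!\upharpoonright_{B_{X^*}}$; since $\|P_s\|\le 1$, we have $P_s^*[B_{X^*}]\subset B_{X^*}$, so each $r_s$ is a well-defined self-map of $B_{X^*}$, and it is $w^*$-continuous because $P_s^*$ is. It is a retraction onto $P_s^*[B_{X^*}]$ by idempotency of $P_s$, and property (ii) of a retractional skeleton ($r_s = r_s\circ r_t = r_t\circ r_s$ for $s\le t$) is immediate from the corresponding identity for the $P_s$. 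So the bulk of the work is (a) checking the convergence conditions (i) and (iii), and (b) verifying that the resulting induced set $R = \bigcup_{s\in\Gamma}P_s^*[B_{X^*}]$ is convex and symmetric — symmetry being obvious since each $P_s^*[B_{X^*}]$ is symmetric, and convexity being the point that needs a genuine argument.

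First I would treat metrizability of $r_s[B_{X^*}] = P_s^*[B_{X^*}]$. Since $P_s X$ is separable and $P_s^*[X^*]$ is isometric (via restriction) to $(P_sX)^*$ — here one uses that $P_s$ is a norm-one projection, so restriction $P_s^*x^*\mapsto (P_s^*x^*)\!\upharpoonright_{P_sX}$ is a well-defined isometric bijection $P_s^*[X^*]\to(P_sX)^*$ commuting with the respective $w^*$-topologies — the set $P_s^*[B_{X^*}]$ is $w^*$-homeomorphic to $B_{(P_sX)^*}$ with its $w^*$-topology, which is metrizable because $P_sX$ is separable. For condition (i), that $x^* = \lim_{s\in\Gamma}P_s^*x^*$ in the $w^*$-topology: for a fixed $x\in X$ choose $s$ with $x\in P_sX$ (possible since $X=\bigcup P_sX$ and $\Gamma$ is up-directed), and then for all $t\ge s$ we get $(P_t^*x^*)(x) = x^*(P_tx) = x^*(x)$; hence the net $(P_t^*x^*(x))_t$ is eventually constant equal to $x^*(x)$ for every $x$, giving $w^*$-convergence. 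Condition (iii), for an increasing chain $s_0<s_1<\cdots$ with $t=\sup_n s_n$: property (iii) of the projectional skeleton gives $P_tX = \overline{\bigcup_n P_{s_n}X}$, and then for $x\in X$ the sequence $P_{s_n}^*x^*(x) = x^*(P_{s_n}x)$ converges to $x^*(P_tx) = P_t^*x^*(x)$, because $P_{s_n}x\to P_tx$ in norm (this is the standard fact: $P_{s_n}P_t = P_{s_n}$, and $P_tX$ is the closed union of the $P_{s_n}X$, so $P_{s_n}(P_tx)\to P_tx$); thus $r_t(x^*) = \lim_n r_{s_n}(x^*)$ pointwise on $B_{X^*}$, which is $w^*$-convergence.

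The main obstacle is convexity of $R = D\cap B_{X^*}$ (equivalently convexity of $D(\mathfrak s)\cap B_{X^*}$). I would argue it as follows. By {\cite[Theorem 27]{kubis}} — or directly from the structure theory of projectional skeletons recalled in the excerpt — the set $D$ induced by a $1$-projectional skeleton is a $1$-norming subspace of $X^*$ (it is clearly a linear subspace, being a union of the up-directed family of subspaces $P_s^*[X^*]$; it is $1$-norming because for $x\in X$, picking $s$ with $x\in P_sX$ gives $\|x\| = \|P_sx\| = \sup\{|x^*(P_sx)| : x^*\in B_{X^*}\} = \sup\{|(P_s^*x^*)(x)| : x^*\in B_{X^*}\} \le \sup\{|y^*(x)| : y^*\in D\cap B_{X^*}\}$). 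Then $R = D\cap B_{X^*}$ is the intersection of the linear subspace $D$ with the convex symmetric set $B_{X^*}$, hence convex and symmetric. Finally, to see $R$ is the set induced by the $r$-skeleton $\{r_s\}$, note $\bigcup_s r_s[B_{X^*}] = \bigcup_s P_s^*[B_{X^*}] \subset D\cap B_{X^*}$, and conversely if $x^*\in D\cap B_{X^*}$ then $x^*\in P_s^*[X^*]$ for some $s$, so $x^* = P_s^*x^*$ (as $P_s^*$ restricted to $P_s^*[X^*]$ is the identity), and $x^*\in P_s^*[B_{X^*}]$ since $\|x^*\|\le 1$; hence $\bigcup_s r_s[B_{X^*}] = D\cap B_{X^*} = R$. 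This completes the verification that $\{r_s\}_{s\in\Gamma}$ is a retractional skeleton in $(B_{X^*},w^*)$ with convex symmetric induced set $R$.
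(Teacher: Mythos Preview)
Your proof is correct and follows essentially the same approach as the paper: set $r_s = P_s^*\!\upharpoonright_{B_{X^*}}$, check that $\{r_s\}_{s\in\Gamma}$ is a retractional skeleton in $(B_{X^*},w^*)$, identify $R = D\cap B_{X^*}$, and deduce convexity and symmetry from the fact that $D$ is linear as an up-directed union of linear subspaces. The only difference is that the paper delegates the skeleton verification to \cite[Lemma~10]{kubis}, whereas you spell out conditions (i)--(iii) and the metrizability directly; your added remark that $D$ is $1$-norming is true but not needed for the argument.
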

\begin{proof}Let $\{P_s\}_{s\in\Gamma}$ be a 1-projectional skeleton such that $D = \bigcup_{s\in\Gamma}P^*_s(X^*)$. Using only the definitions and {\cite[Lemma 10]{kubis}} it is easy to see that $\{P^*_s\upharpoonright_{B_{X^*}}\}_{s\in\Gamma}$ is retractional skeleton in $B_{X^*}$. Now it remains to show that $R = \bigcup_{s\in\Gamma}P^*_s(B_{X^*})$ is convex and symmetric. It is easily checked that $R = D\cap B_{X^*}$. Now we observe that $D$ is an up-directed union of linear sets; thus, it is linear. Consequently, $R$ is convex and symmetric.
\end{proof}

Now we are ready to see that once we know (i)$\Rightarrow$(ii) in Theorem \ref{tMain2} (resp. Theorem \ref{tMain3}), (iii) (resp. (iv)) is the strongest condition.
\begin{proposition}\label{pSkeletonAndPK}Let $K$ be a compact space. Consider the following conditions
\begin{enumerate}[\upshape (i)]
	\item $K$ has a retractional skeleton.
	\item $\C(K)$ has a 1-projectional skeleton.
	\item There is a convex symmetric set induced by an $r$-skeleton in $(B_{\C(K)^*},w^*)$.
	\item $(B_{\C(K)^*},w^*)$ has a retractional skeleton.
	\item There is a convex symmetric set induced by an $r$-skeleton in $P(K)$.
	\item $P(K)$ has a retractional skeleton.
\end{enumerate}Then the following implications hold:
$$(i)\Rightarrow (ii)\Rightarrow (iii)\Rightarrow (iv)\Rightarrow (vi),\quad(iii)\Rightarrow (v)\Rightarrow (vi).$$
Moreover, if $K$ has a dense set of $G_\delta$ points, then all the conditions are equivalent.
\end{proposition}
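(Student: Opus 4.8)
The plan is to read off almost every implication from the stability machinery of this section, isolating the only two places where a genuine argument is needed: the construction behind $(i)\Rightarrow(ii)$ and a topological description of $P(K)$ as a subset of $(B_{\C(K)^*},w^*)$.

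For $(i)\Rightarrow(ii)$ I would start from a retractional skeleton $\{r_s\}_{s\in\Gamma}$ in $K$ and put $P_s f=f\circ r_s$ for $f\in\C(K)$; each $P_s$ is then a norm-one linear projection on $\C(K)$ with range isometric to $\C(r_s[K])$, hence separable. The verification that $\{P_s\}_{s\in\Gamma}$ satisfies conditions (ii)--(iii) of a projectional skeleton is routine; the point that really needs care is the equality $\bigcup_{s\in\Gamma}P_s[\C(K)]=\C(K)$, i.e.\ that every $f\in\C(K)$ is constant on the fibres of some $r_s$, a standard closing-off argument using (i) and (iii) of the skeleton (see \cite{kubis}). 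Thus $\{P_s\}_{s\in\Gamma}$ is a $1$-projectional skeleton. Then $(ii)\Rightarrow(iii)$ is precisely Proposition~\ref{lPSkeletonImpliesRSkeleton} applied to $X=\C(K)$, and $(iii)\Rightarrow(iv)$ together with $(v)\Rightarrow(vi)$ are immediate, since a set induced by an $r$-skeleton witnesses in particular the existence of a retractional skeleton.

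The core of the remaining implications $(iv)\Rightarrow(vi)$ and $(iii)\Rightarrow(v)$ is the observation that, inside the compact space $(B_{\C(K)^*},w^*)$, the set $P(K)$ is a countable intersection of closures of open sets. Writing $\mathbf 1$ for the constant function $1$ on $K$, the inequalities $\|\mu\|\le 1$ and $\mu(\mathbf 1)=1$ force $\mu\ge 0$ (comparing the positive and negative parts of $\mu$), so $P(K)=\{\mu\in B_{\C(K)^*}:\mu(\mathbf 1)=1\}$. Set $U_n=\{\mu\in B_{\C(K)^*}:\mu(\mathbf 1)>1-\tfrac1n\}$, which is $w^*$-open; I would check that $\overline{U_n}=\{\mu\in B_{\C(K)^*}:\mu(\mathbf 1)\ge 1-\tfrac1n\}$ by perturbing a measure $\mu\in B_{\C(K)^*}$ with $\mu(\mathbf 1)=1-\tfrac1n$ to $\mu_t=(1-t)\mu+t\delta_{x_0}$ for a fixed $x_0\in K$, which lies in $U_n$ for $t\in(0,1)$ and $w^*$-converges to $\mu$ as $t\to 0^+$. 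Hence $P(K)=\bigcap_{n\in\en}\overline{U_n}$. Consequently, if $D$ (respectively, the convex symmetric set $R$) is induced by an $r$-skeleton in $B_{\C(K)^*}$, then Lemma~\ref{lSubsetStability} applied in the compact space $B_{\C(K)^*}$ yields that $D\cap P(K)$ (respectively $R\cap P(K)$) is induced by an $r$-skeleton in $P(K)$; and $R\cap P(K)$, an intersection of convex sets, is convex and has the form required in~(v).

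Finally, for the ``Moreover'' I would invoke Lemma~\ref{lInPKSpaces}: if the $G_\delta$ points of $K$ are dense and $P(K)$ has a retractional skeleton, then $\overline{G}$ has a retractional skeleton, where $G$ is the set of $G_\delta$ points of $K$; since $\overline{G}=K$, this is exactly $(i)$. Together with the implications above this closes the cycle $(i)\Rightarrow(ii)\Rightarrow(iii)\Rightarrow(iv)\Rightarrow(vi)\Rightarrow(i)$, with $(v)$ trapped between $(iii)$ and $(vi)$, so all six conditions become equivalent. I expect the only real obstacle to be the verification in $(i)\Rightarrow(ii)$ that the canonical projections $f\mapsto f\circ r_s$ exhaust $\C(K)$ (and not merely form a family with separable ranges and dense union); the rest is bookkeeping with the stability lemmas of this section plus the elementary identification of $P(K)$ with $\{\mu\in B_{\C(K)^*}:\mu(\mathbf 1)=1\}$.
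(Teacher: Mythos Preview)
Your proposal is correct and follows essentially the paper's approach: both defer $(i)\Rightarrow(ii)$ to Kubi\'s, use Proposition~\ref{lPSkeletonImpliesRSkeleton} for $(ii)\Rightarrow(iii)$, exploit the $G_\delta$-type description of $P(K)$ inside $B_{\C(K)^*}$ for the passage to $P(K)$, and invoke Lemma~\ref{lInPKSpaces} for $(vi)\Rightarrow(i)$ under the density hypothesis. The only cosmetic difference is that you apply Lemma~\ref{lSubsetStability} directly to $P(K)=\bigcap_n\overline{U_n}$, whereas the paper records that $P(K)$ is a closed $G_\delta$ set and combines Lemmas~\ref{lGDelta} and~\ref{lClosedSubset}; these are equivalent routes.
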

\begin{proof}The implication (i)$\Rightarrow$(ii) comes from {\cite[Proposition 28]{kubis}}, (ii)$\Rightarrow$(iii) follows from Lemma \ref{lPSkeletonImpliesRSkeleton}, (iii)$\Rightarrow$(iv) and (v)$\Rightarrow$(vi) are obvious. For (iv)$\Rightarrow$(vi) and (iii)$\Rightarrow$(v) it is enough to observe that $P(K)$ is a closed $G_\delta$ set in $(B_{\C(K)^*},w^*)$ and use Lemma \ref{lGDelta} and Lemma \ref{lClosedSubset}. If $K$ has a dense set of $G_\delta$ points, then (vi)$\Rightarrow$(i) follows from Lemma \ref{lInPKSpaces}.
\end{proof}

\begin{remark}\label{remark}It is known that the implication (ii)$\Rightarrow$(i) in Proposition \ref{pSkeletonAndPK} does not hold. There even exists a compact space $K$ such that $\C(K)$ is 1-Plichko, but $K\notin\R$ (see {\cite[Theorem 3.2]{kubisExample}}). However, in the case of commutative skeletons (i.e. Plichko spaces and Valdivia compact spaces), it is true that (ii)$\Leftrightarrow$(iii)$\Leftrightarrow$(v) (see {\cite[Theorem 5.2]{kalendaSurvey}}). The proof of this fact uses a characterization of dense subsets induced by an $r$-skeleton in $K$ by a topological property of the space $(\C(K),\tau_p(D))$ ($\tau_p(D)$ is the topology of the pointwise convergence on $D$). Thus, two natural questions arise. They are formulated at the end of this article (see Problem \ref{problem1} and Question \ref{question1}).
\end{remark}

\section{The method of elementary submodels}

In this section we prove Lemma \ref{lQuotient} and Lemma \ref{lQuotient2} using the method of elementary submodels. If one does not feel comfortable with this method, he can skip this section and use only its results. The knowledge of the method elementary submodels is not needed any further.

The method of elementary submodels is a set-theoretical method which can be used in various branches of mathematics.
W.Kubi\'s in \cite{kubis} used this method to create a projectional (resp. retractional) skeleton in certain Banach (resp. compact) spaces. In \cite{cuth} the method has been slightly simplified and precised. We briefly recall some basic facts about the method and give a more detailed proof of Theorem \ref{tRetractModel} which is also proved in a slightly different form in {\cite[Theorem 19.16]{kubisKniha}}. Finally, we prove Lemma \ref{lQuotient} and Lemma \ref{lQuotient2}.

First, let us recall some definitions:

Let $N$ be a fixed set and $\phi$ a formula in the language of $ZFC$. Then the {\em relativization of $\phi$ to $N$} is the formula $\phi^N$ which is obtained from $\phi$ by replacing each quantifier of the form ``$\forall x$'' by ``$\forall x\in N$'' and each quantifier of the form ``$\exists x$'' by ``$\exists x\in N$''.

If $\phi(x_1,\ldots,x_n)$ is a formula with all free variables shown (i.e. a formula whose free variables are exactly $x_1,\ldots,x_n$) then {\em $\phi$ is absolute for $N$} if and only if
$$\forall a_1,\ldots,a_n\in N\quad (\phi^N(a_1,\ldots,a_n) \leftrightarrow \phi(a_1,\ldots,a_n)).$$

The method is based mainly on the following theorem (a proof can be found in {\cite[Theorem IV.7.8]{kunen}}).

\begin{thm}\label{tCountModel}
 Let $\phi_1,\ldots,\phi_n$ be any formulas and $X$ any set. Then there exists a set $M\supset X$ such, that
 $$(\phi_1,\ldots,\phi_n \text{ are absolute for }M)\quad \wedge\quad (|M|\leq \max(\omega,|X|)).$$
\end{thm}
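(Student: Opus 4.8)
The plan is to invoke the Löwenheim–Skolem style construction of countable elementary submodels in the form stated, so really the only thing to do is to recall why Theorem \ref{tCountModel} follows from the machinery referenced in \cite{kunen}. First I would fix the finitely many formulas $\phi_1,\dots,\phi_n$ and the set $X$. The key idea is the reflection/absoluteness argument: by the Tarski–Vaught test one builds an increasing $\omega$-chain of sets, starting from $X$, closing off at each stage under ``Skolem witnesses'' for each $\phi_i$ and each of its subformulas, i.e. whenever $\exists x\,\psi(x,\bar a)$ holds with parameters $\bar a$ already in the current set, one throws in one such witness $x$. Taking the union $M$ of the chain yields a set of cardinality at most $\max(\omega,|X|)$ (each step adds at most countably many elements per tuple of parameters, and there are at most $\max(\omega,|X|)$ such tuples), and $M$ satisfies the Tarski–Vaught condition for each $\phi_i$, hence each $\phi_i$ is absolute for $M$.

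Concretely, the cleanest route is to quote {\cite[Theorem IV.7.8]{kunen}} essentially verbatim, since that reference already proves exactly this statement; the proof there is the downward Löwenheim–Skolem theorem applied with the transitive class $V$ in place of a structure, using that the collection of subformulas of $\phi_1,\dots,\phi_n$ is finite so that no global choice of a satisfaction predicate is needed. I would present the cardinality bookkeeping as the one point worth spelling out: at stage $k+1$ one adjoins, for each subformula $\exists x\,\psi$ and each finite tuple $\bar a$ from stage $k$, a single witness (using the axiom of choice to pick one); since stage $k$ has size $\le \max(\omega,|X|)$ it has $\le \max(\omega,|X|)$ finite tuples, and finitely many subformulas, so stage $k+1$ still has size $\le \max(\omega,|X|)$; the union over $k\in\omega$ preserves this bound.

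The only genuine subtlety — and the step I'd flag as the main obstacle — is making sense of ``Skolem witnesses for subformulas of $\phi_i$'' inside the proper class $V$ without invoking a truth predicate for all of set theory (which does not exist by Tarski). The resolution, as in \cite{kunen}, is that we only ever need the Tarski–Vaught condition for the finitely many formulas in our fixed finite list together with their finitely many subformulas; for any fixed formula $\psi$, the class $\{(\bar a, x): \psi(x,\bar a)\}$ is definable, so ``pick a witness when one exists'' is a legitimate (class-)function definable from that formula, and finitely many such functions suffice. With that observation the construction goes through in $\mathrm{ZFC}$ and delivers $M\supseteq X$ with $|M|\le\max(\omega,|X|)$ for which $\phi_1,\dots,\phi_n$ are all absolute, which is exactly the assertion. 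I would end by remarking that in the applications of this paper one always enlarges the list $\phi_1,\dots,\phi_n$ so that $M$ is closed under the finitely many operations one cares about, which is the standard way the theorem gets used.
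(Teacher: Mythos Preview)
Your proposal is correct and matches the paper's approach exactly: the paper does not give its own proof but simply cites {\cite[Theorem IV.7.8]{kunen}}, which is precisely the downward L\"owenheim--Skolem/reflection construction you outline. Your sketch of the Skolem-hull $\omega$-chain and the cardinality bookkeeping is an accurate elaboration of that cited argument, and your remark about avoiding a global truth predicate by restricting to the finite subformula list is exactly the point Kunen makes there.
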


Since the set from Theorem \ref{tCountModel} will often be used, the following notation is useful.

\begin{definition}
 Let $\phi_1,\ldots,\phi_n$ be any formulas and let $X$ be any countable set. Let $M\supset X$ be a countable set satisfying that $\phi_1,\ldots,\phi_n$ are absolute for $M$. Then we say that {\em $M$ is an elementary submodel for $\phi_1,\ldots,\phi_n$ containing $X$}. This is denoted by $M\prec(\phi_1,...,\phi_n;\; X)$.
\end{definition}

We shall also use the following convention.
\begin{convention} \label{conventionM}
 Whenever we say\\[8pt]
 {\em for any suitable elementary submodel $M$ (the following holds...)},\\[8 pt]
 we mean that\\[8pt]
 {\em there exists a list of formulas $\phi_1,\ldots,\phi_n$ and a countable set $Y$ such that for every $M\prec(\phi_1,\ldots,\phi_n;\;Y)$ (the following holds...)}.
\end{convention}

By using this new terminology we lose the information about the formulas $\phi_1,\ldots,\phi_n$ and the set $Y$. This is, however, not important in applications.

Let us recall several more results about suitable elementary submodels (proofs can be found in {\cite[Chapters 2 and 3]{cuth}}):
\begin{lemma}\label{lCupM}
 Let $\varphi_1,\ldots,\varphi_n$ be a subformula closed list of formulas and let $X$ be any countable set. Let $\{M_k\}_{k\in\omega}$ be a sequence of sets satisfying
 \begin{itemize}
	\item[(i)] $M_i\subset M_j,\quad i\leq j,$
	\item[(ii)] $\forall k\in\omega:\; M_k\prec(\varphi_1,...,\varphi_n;\; X).$	
 \end{itemize}
 Then for $M:=\bigcup_{k\in\omega}{M_k}$ it is true, that also $M \prec(\varphi_1,...,\varphi_n;\; X)$.
\end{lemma}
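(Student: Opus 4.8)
The plan is simply to verify, for $M:=\bigcup_{k\in\omega}M_k$, the three defining requirements of the relation $M\prec(\varphi_1,\ldots,\varphi_n;\,X)$: that $M$ is countable, that $X\subseteq M$, and that each $\varphi_i$ is absolute for $M$. The first is clear since a countable union of countable sets is countable, and the second holds because already $M_0\supseteq X$. So the whole content lies in the absoluteness, which I would establish by induction on the logical complexity of formulas. Here the hypothesis that the list $\varphi_1,\ldots,\varphi_n$ is subformula closed is exactly what legitimizes the induction: every subformula met along the way is again one of the $\varphi_j$, hence (by hypothesis (ii)) absolute for every $M_k$ and (by the inductive hypothesis) absolute for $M$.

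For the base case, atomic formulas such as $x\in y$ and $x=y$ are absolute for any set, their relativizations being unchanged. For a Boolean connective, say $\varphi=\neg\psi$ or $\varphi=\psi_1\wedge\psi_2$, relativization commutes with the connective, so absoluteness of the subformulas for $M$ gives absoluteness of $\varphi$ for $M$; this uses nothing beyond the inductive hypothesis.

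The real work is the quantifier step; I would take $\neg$, $\wedge$, $\exists$ as the only logical primitives (so that $\forall$ is the abbreviation $\neg\exists\neg$ and is handled automatically), and treat $\varphi(x_1,\ldots,x_m)=\exists y\,\psi(y,x_1,\ldots,x_m)$, where $\psi$ is again on the list. Fix $a_1,\ldots,a_m\in M$. The implication $\varphi^M(a_1,\ldots,a_m)\Rightarrow\varphi(a_1,\ldots,a_m)$ is immediate: a witness $b\in M$ with $\psi^M(b,a_1,\ldots,a_m)$ satisfies $\psi(b,a_1,\ldots,a_m)$ by the inductive hypothesis. For the converse, suppose $\varphi(a_1,\ldots,a_m)$. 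By (i) there is $k\in\omega$ with $a_1,\ldots,a_m\in M_k$; since $\varphi$ is on the list, (ii) gives $\varphi^{M_k}(a_1,\ldots,a_m)$, i.e.\ there is $b\in M_k$ with $\psi^{M_k}(b,a_1,\ldots,a_m)$. Because $\psi$ is also on the list, (ii) yields $\psi(b,a_1,\ldots,a_m)$, and then $\psi^M(b,a_1,\ldots,a_m)$ by the inductive hypothesis, using $b\in M_k\subseteq M$; hence $\varphi^M(a_1,\ldots,a_m)$. This completes the induction, and with it the proof.

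I expect the only subtle point — and the reason the hypotheses are stated exactly as they are — to be precisely this last step: one needs all parameters to lie in a common $M_k$ (this is where the chain condition (i) enters) and one needs absoluteness for that $M_k$ not merely of the subformula $\psi$ but of the whole formula $\varphi$, which is available only because subformula-closedness keeps $\varphi$ itself among the formulas covered by (ii). Once this Tarski--Vaught-type observation is in place, everything else is bookkeeping.
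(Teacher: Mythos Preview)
Your argument is correct and is the standard Tarski--Vaught verification. Note that the paper itself does not supply a proof of this lemma but simply cites \cite{cuth} for it; the argument given there is exactly the one you outline, so there is nothing further to compare.
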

\begin{lemma}\label{lBasicPropertiesOfM}For any suitable elementary submodel $M$ the following holds:
  \begin{enumerate}[\upshape (i)]
    \item Let $f$ be a function such that $f\in M$. Then $\dom{f} \in M$, $\rng{f} \in M$ and $f(M)\subset M$.
    \item Let $S\in M$ be a countable set. Then $S\subset M$.
  \end{enumerate}
\end{lemma}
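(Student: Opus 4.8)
The plan is to prove the two standard facts about suitable elementary submodels stated in Lemma \ref{lBasicPropertiesOfM}, relying on the absoluteness machinery from Theorem \ref{tCountModel} and Convention \ref{conventionM}. The idea in each case is to choose the defining list of formulas $\phi_1,\ldots,\phi_n$ cleverly enough that the relevant set-theoretic object (the domain, the range, a particular value $f(x)$, or an element of a countable set) is \emph{definable} from parameters already in $M$, so that absoluteness forces the witness to lie in $M$ as well.

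\medskip

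First I would handle (i). Fix a function $f \in M$. For the domain: the statement ``$y = \dom f$'' is expressible by a first-order formula $\phi(y,f)$ (namely, $\forall z\, (z \in y \leftrightarrow \exists w\, \langle z,w\rangle \in f)$, after unfolding the ordered-pair coding), and by Convention \ref{conventionM} we may assume this formula together with all its subformulas is on the list defining ``suitable''. Since $\dom f$ genuinely exists in $V$, the relativized existential $\exists y\in M\, \phi^M(y,f)$ holds, so there is $y \in M$ with $\phi^M(y,f)$; by absoluteness $\phi(y,f)$ holds in $V$, hence $y = \dom f \in M$. The argument for $\rng f$ is identical with the obvious variant formula. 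For $f(M) \subset M$: given $x \in M$ with $x \in \dom f$, the value $f(x)$ is the unique $w$ with $\langle x,w\rangle \in f$; put this uniqueness-existence statement on the list and run the same absoluteness argument, with $f$ and $x$ as parameters, to get $f(x) \in M$. (If $x \notin \dom f$ there is nothing to prove.)

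\medskip

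Next I would do (ii). Let $S \in M$ be countable. Being countable means there exists a surjection $g : \omega \to S$ (or $g$ a bijection onto a natural number, if $S$ is finite — but ``countable'' here includes finite, so a surjection from $\omega$ is the uniform choice, allowing repetitions). The statement ``$\exists g\, (g$ is a function $\wedge\ \dom g = \omega \wedge \rng g = S)$'' is a formula in the parameter $S$; with it and its subformulas on the list, absoluteness gives such a $g \in M$. Now $\omega \in M$ (it is definable outright, so it lands in every suitable $M$ by the same trick), and $\omega$ is itself countable, so applying the argument once more — or simply noting each $n \in \omega$ is definable from $\emptyset$ and hence in $M$ — we get $\omega \subset M$. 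Then for each $n \in \omega \subset M$ part (i) applied to $g$ yields $g(n) \in M$; since $g$ is onto $S$, every element of $S$ is some $g(n)$, so $S \subset M$.

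\medskip

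The main obstacle is not any deep mathematics but the bookkeeping: one must be careful that (a) the finitely many formulas invoked (and all their subformulas, as Lemma \ref{lCupM}'s hypothesis and the spirit of Convention \ref{conventionM} suggest one should take a subformula-closed list) are fixed \emph{in advance} independently of the particular $f$ or $S$, which is fine because there are only finitely many formula-schemes and the objects $f$, $S$ enter merely as parameters; and (b) the ordered-pair and function coding is spelled out so that ``$f$ is a function'', ``$y=\dom f$'', ``$z=f(x)$'' really are first-order in the language of set theory. Both of these are routine once one commits to a coding convention; I would state the formulas explicitly enough to make the absoluteness step unambiguous and leave the pairing-function verification to the reader, exactly as in \cite{cuth}.
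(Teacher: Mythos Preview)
Your proof is correct and follows the standard absoluteness argument. Note, however, that the paper itself does not prove this lemma at all: it is stated as one of ``several more results about suitable elementary submodels (proofs can be found in \cite[Chapters 2 and 3]{cuth})'', so there is no in-paper proof to compare against. Your argument is precisely the expected one---repeated application of the uniqueness trick encapsulated in Lemma~\ref{lUniqueInM}---and is presumably what appears in the cited reference; the only cosmetic remark is that in part~(i) your sentence ``Since $\dom f$ genuinely exists in $V$, the relativized existential $\exists y\in M\,\phi^M(y,f)$ holds'' tacitly uses absoluteness of the existential formula $\exists y\,\phi(y,f)$ (not just of $\phi$), which is fine since you take a subformula-closed list, but it is worth making that step explicit.
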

\begin{lemma}\label{lUniqueInM}
 Let $\phi(y,x_1,\ldots,x_n)$ be a formula with all free variables shown and let $X$ be a countable set. Let $M$ be a fixed set, $M\prec(\phi, \exists y\phi(y,x_1,\ldots,x_n);\; X)$ and let $a_1,\ldots,a_n \in M$ be such that there exists only one set $u$ satisfying $\phi(u,a_1,\ldots,a_n)$. Then $u\in M$.
\end{lemma}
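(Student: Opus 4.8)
The plan is to unwind the definition of relativization and feed the two absoluteness hypotheses in one after the other. First I would observe that, by assumption, there is a (unique) set $u$ with $\phi(u,a_1,\ldots,a_n)$; in particular the statement $\exists y\,\phi(y,a_1,\ldots,a_n)$ holds in the real universe. Since $a_1,\ldots,a_n\in M$ and $\exists y\,\phi(y,x_1,\ldots,x_n)$ is absolute for $M$, the forward direction of the absoluteness equivalence yields that $\bigl(\exists y\,\phi(y,x_1,\ldots,x_n)\bigr)^M(a_1,\ldots,a_n)$ holds. By the definition of the relativization $\phi^M$, the relativized existential quantifier ranges over $M$, so this says exactly that there is some $u'\in M$ with $\phi^M(u',a_1,\ldots,a_n)$.

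Next I would apply absoluteness of $\phi$ itself: since $u',a_1,\ldots,a_n$ all lie in $M$ and $\phi(y,x_1,\ldots,x_n)$ is absolute for $M$, the equivalence $\phi^M(u',a_1,\ldots,a_n)\leftrightarrow\phi(u',a_1,\ldots,a_n)$ gives that $\phi(u',a_1,\ldots,a_n)$ is true in the real universe. But $u$ was assumed to be the \emph{unique} set satisfying $\phi(\,\cdot\,,a_1,\ldots,a_n)$, hence $u'=u$, and therefore $u\in M$, which is the claim.

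I do not expect any real obstacle here beyond careful bookkeeping: the one thing to be attentive to is that both $\phi$ and $\exists y\,\phi(y,x_1,\ldots,x_n)$ must appear on the list of formulas for which $M$ is a suitable elementary submodel — which is precisely what the hypothesis $M\prec(\phi,\exists y\phi(y,x_1,\ldots,x_n);\,X)$ supplies — and that one must correctly read off from the definition of $\phi^M$ that the relativized witness $u'$ is automatically an element of $M$. Note that no property of $X$ is used apart from the existence of such an $M$ containing it, and the countability of $M$ plays no role in this argument.
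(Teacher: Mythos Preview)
Your argument is correct and is the standard proof of this elementary fact about absoluteness. The paper does not give its own proof of this lemma; it merely cites it from \cite[Chapters 2 and 3]{cuth}, and the proof there is exactly the two-step absoluteness argument you wrote: pass from $\exists y\,\phi(y,a_1,\ldots,a_n)$ to its relativization to get a witness $u'\in M$, then use absoluteness of $\phi$ and uniqueness to conclude $u'=u$.
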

Using the last lemma we can force the elementary submodel $M$ to contain all the needed objects created (uniquely) from elements of $M$.

Given a compact space $K$ and an arbitrary elementary submodel $M$ we define the following equivalence relation $\sim_M$ on $K$:
$$x\sim_M y \quad\iff\quad (\forall f \in \C(K)\cap M):\; f(x) = f(y).$$
We shall write $K/_M$ instead of $K/_{\sim_M}$ and we shall denote by $q_K^M$ the canonical quotient map. It is not hard to check that $K/_M$ is a compact Hausdorff space.

In \cite{cuth} it is proved that the following lemma holds (slightly different version may be also found in \cite{kubis}). 

\begin{lemma}\label{lCKM}Let $K$ be a compact space. Then for any suitable elementary submodel $M$ it is true that
  $$\ov{\C(K)\cap M} = \{\varphi\circ q_K^M:\;\varphi\in \C(K/_M)\}.$$
Consequently, we can identify $\ov{\C(K)\cap M}$ with the space $\C(K/_M)$.
\end{lemma}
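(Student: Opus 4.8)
The plan is to identify $\C(K/_M)$ with a closed subalgebra of $(\C(K),\|\cdot\|_\infty)$ and then apply the Stone--Weierstrass theorem on the compact Hausdorff space $K/_M$. Since $q_K^M\colon K\to K/_M$ is a continuous surjection, $\Phi\colon\varphi\mapsto\varphi\circ q_K^M$ is a linear isometric algebra embedding of $\C(K/_M)$ into $\C(K)$ whose range $\mathcal B:=\{\varphi\circ q_K^M:\varphi\in\C(K/_M)\}$ is a closed subalgebra of $\C(K)$. First I would note that every $f\in\C(K)\cap M$ lies in $\mathcal B$: by the very definition of $\sim_M$ such an $f$ is constant on the fibres of $q_K^M$, hence $f=\Phi(\tilde f)$ for a unique (automatically continuous) $\tilde f\in\C(K/_M)$. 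Thus $\C(K)\cap M=\Phi(\mathcal A)$ where $\mathcal A:=\{\tilde f:f\in\C(K)\cap M\}$, and since $\Phi$ is an isometry onto the closed set $\mathcal B$ we obtain $\overline{\C(K)\cap M}=\Phi(\overline{\mathcal A})\subseteq\mathcal B$. Hence the whole statement reduces to proving $\overline{\mathcal A}=\C(K/_M)$.

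To establish $\overline{\mathcal A}=\C(K/_M)$ I would verify the hypotheses of Stone--Weierstrass for $\overline{\mathcal A}$: that it is a subalgebra, contains the constants, and separates the points of $K/_M$. Separation is immediate, as it is exactly how $\sim_M$ was defined: if $q_K^M(x)\neq q_K^M(y)$ then $x\not\sim_M y$, so some $f\in\C(K)\cap M$ has $f(x)\neq f(y)$, whence $\tilde f$ separates $q_K^M(x)$ from $q_K^M(y)$. The algebraic part is where the submodel machinery is used: invoking Convention~\ref{conventionM}, I would place $K$, $\er$ and $\mathbb Q$ into the countable set $Y$ and add formulas so that, by Lemma~\ref{lUniqueInM}, the set $\C(K)$, the addition, multiplication and scalar multiplication on $\C(K)$, and the map $\er\ni r\mapsto r\cdot 1_K\in\C(K)$ all belong to $M$. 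Then $\mathbb Q\subseteq M$ and, by Lemma~\ref{lBasicPropertiesOfM}, $\C(K)\cap M$ is closed under these operations and contains every rational constant function; consequently its supremum-norm closure $\overline{\C(K)\cap M}$ is a closed subalgebra of $\C(K)$ containing all constants. Pushing this through the isometric algebra isomorphism $\Phi^{-1}\colon\mathcal B\to\C(K/_M)$ shows $\overline{\mathcal A}$ is a closed subalgebra of $\C(K/_M)$ containing the constants, and Stone--Weierstrass gives $\overline{\mathcal A}=\C(K/_M)$, i.e. $\overline{\C(K)\cap M}=\mathcal B$. The ``Consequently'' sentence then follows at once, since $\Phi$ is an isometric algebra isomorphism of $\C(K/_M)$ onto $\overline{\C(K)\cap M}$.

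I expect the only genuine obstacle to be this bookkeeping step: choosing the list of formulas and the set $Y$ so that ``suitable $M$'' really does force $\C(K)\cap M$ to be closed under the algebra operations (over $\mathbb Q$) and to contain $1_K$; this is where one must be careful with the method of elementary submodels. Granting that, the factorisation of members of $\C(K)\cap M$ through $q_K^M$, the properties of the isometry $\Phi$, and the application of Stone--Weierstrass are all routine.
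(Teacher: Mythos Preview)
The paper does not prove this lemma itself; it merely cites \cite{cuth} (noting a variant in \cite{kubis}). Your Stone--Weierstrass argument is correct and is exactly the standard proof one finds in that reference: the inclusion $\C(K)\cap M\subseteq\mathcal B$ is immediate from the definition of $\sim_M$, and the reverse inclusion follows once the submodel bookkeeping guarantees that $\C(K)\cap M$ is a $\mathbb Q$-subalgebra containing the rational constants, so that its image in $\C(K/_M)$ is a point-separating subalgebra with constants.

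The only comment worth adding is that your caution about the ``bookkeeping step'' is well placed but easily resolved: rather than forcing the operation maps themselves into $M$ and then chasing pairs, it is cleaner to include in the list of formulas the sentences
\[
\forall f,g\in\C(K)\ \exists! h\in\C(K)\ (h=f+g),\qquad
\forall f,g\in\C(K)\ \exists! h\in\C(K)\ (h=f\cdot g),
\]
together with $\exists! h\in\C(K)\ (h=1_K)$ and $\forall q\in\mathbb Q\ \forall f\in\C(K)\ \exists! h\ (h=qf)$, and to put $K$ and $\mathbb Q$ into $Y$. Lemma~\ref{lUniqueInM} then gives closure of $\C(K)\cap M$ under sums, products and rational scalars directly, and $1_K\in M$; Lemma~\ref{lBasicPropertiesOfM}(ii) gives $\mathbb Q\subset M$. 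With this in hand your argument goes through verbatim.
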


We will need the following simple, but useful lemma.

\begin{lemma}\label{lMetrizableCpct}For every suitable elementary submodel $M$ the following holds: Let $K$ be a compact metric space. Then whenever $K\in M$, $\C(K)\cap M$ separates points of $K$.
\end{lemma}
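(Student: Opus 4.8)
The plan is, given a suitable elementary submodel $M$ and a compact metric space $K\in M$, to produce a \emph{countable} set $F\in M$ of continuous real functions on $K$ that separates the points of $K$; then $F\subset M$ by Lemma \ref{lBasicPropertiesOfM}(ii), so $F\subset\C(K)\cap M$, and the conclusion of the lemma follows at once.

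First I would observe that, since $\C(K)$ is uniquely determined by $K$, Lemma \ref{lUniqueInM} (applied with the defining formula of $\C$ added to the finitely many formulas for which $M$ is assumed to be an elementary submodel) gives $\C(K)\in M$. Next, because $K$ is compact metric the Banach space $\C(K)$ is separable, so the sentence ``$\exists F\,(F$ is a countable dense subset of $\C(K))$'' is true; adding this sentence together with its subformulas to the ``suitable'' list and using $\C(K)\in M$, it reflects to $M$, yielding an $F\in M$ that is genuinely a countable dense subset of $\C(K)$. By Lemma \ref{lBasicPropertiesOfM}(ii) we then get $F\subset M$. Finally, a countable dense subset of $\C(K)$ automatically separates the points of $K$: the space $K$ is compact Hausdorff, hence $\C(K)$ separates points, and if $f(x)=f(y)$ for all $f\in F$ then by density $g(x)=g(y)$ for every $g\in\C(K)$, forcing $x=y$.

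The one place that needs care is the reflection step: one must make sure that all the formulas invoked (``$y=\C(x)$'', ``$F$ is a countable dense subset of $y$'', ``$\exists F(\dots)$'', together with their subformulas) are genuinely absolute for $M$, so that their truth in $M$ coincides with their truth in $V$. This is exactly what Theorem \ref{tCountModel} permits, and it is why the statement is phrased for \emph{every suitable} elementary submodel — we are free to fix the required finite list in advance. Apart from this routine bookkeeping I do not expect any serious obstacle. (Alternatively the whole argument can be run with ``$F$ separates the points of $K$'' in place of ``$F$ is dense in $\C(K)$'', avoiding any reference to separability of $\C(K)$.)
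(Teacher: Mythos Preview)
Your proposal is correct and follows essentially the same route as the paper: reflect to $M$ the existence of a countable family of continuous functions doing the job, then use Lemma~\ref{lBasicPropertiesOfM}(ii) to conclude that this family lies inside $M$. The paper carries this out directly with the formula ``$\exists D\,(D$ is a countable subset of $\C(K)$ separating points of $K)$'' --- precisely the alternative you mention at the end --- rather than detouring through density of $F$ in $\C(K)$, and it does not bother to isolate $\C(K)\in M$ as a separate step; but these are cosmetic differences, not substantive ones.
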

\begin{proof}Fix a suitable elementary submodel $M$ such that $K\in M$. Then, using the absoluteness of the formula (and its subformula)
$$\exists D(D\text{ is a countable subset of }\C(K)\text{ separating points of }K),$$
there exists a countable set $D\in M$ separating points of $K$. By Lemma \ref{lBasicPropertiesOfM}, $D\subset M$. Consequently, $\C(K)\cap M \supset D$ separates points of $K$.
\end{proof}

Finally, let us show how the method of elementary submodels is connected with the compact spaces with a retractional skeleton.

\begin{thm}[{\cite[Theorem 19.16]{kubisKniha}}]\label{tRetractModel}Let $K$ be a compact space, and let $D$ be its dense subset. The following properties are equivalent:
\begin{enumerate}[\upshape (i)]
	\item There exists a set $D(\mathfrak{s})$ induced by an $r$-skeleton in $K$ such that $D\subset D(\mathfrak{s})$.
	\item For every	suitable elementary submodel $M$, the quotient map $q_K^M:K\to K/_M$ is one-to-one on $\ov{D\cap M}$.
\end{enumerate}
Under the assumption that $D$ is countably closed, the conditions above are also equivalent to the following:
\begin{enumerate}[\upshape (iii)]
	\item $D$ is induced by an $r$-skeleton in $K$.
\end{enumerate}
\end{thm}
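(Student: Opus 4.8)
The plan is to prove the chain (i)$\Rightarrow$(ii)$\Rightarrow$(i), and then, under the countable-closedness assumption, (i)$\Leftrightarrow$(iii). The implication (iii)$\Rightarrow$(i) is trivial, so the real extra content under the extra hypothesis is (i)$\Rightarrow$(iii): having produced \emph{some} induced set $D(\mathfrak s)\supset D$, I must show $D$ itself equals $D(\mathfrak s)$, and here I would invoke the density of $D$ in $K$ together with countable-closedness and Lemma \ref{lUnique} — more precisely, $D\cap D(\mathfrak s)=D$ is dense in $K$, so Lemma \ref{lUnique} forces $D(\mathfrak s)=D$, provided one first checks $D$ is itself induced; the clean route is to observe that countable-closedness of $D$ plus $D\subset D(\mathfrak s)$ and Theorem \ref{tRetractFrechet}(i) (closures of countable subsets of $D(\mathfrak s)$ are metrizable and lie in $D(\mathfrak s)$, hence in $D$) let one restrict the skeleton $\{r_s\}$ to the cofinal set $\{s:\ r_s[K]\subset \overline{\{\ldots\}}\subset D\}$ and recover $D$ as the induced set. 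I expect this to be short once (i) and (ii) are in hand.

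For (i)$\Rightarrow$(ii): fix a skeleton $\mathfrak s=\{r_s\}_{s\in\Gamma}$ with $D\subset D(\mathfrak s)=\bigcup_s r_s[K]$, and fix a suitable elementary submodel $M$, where among the formulas built into ``suitable'' I include enough to see $\mathfrak s$, the order $\Gamma$, the map $s\mapsto r_s$, and the compatibility clauses. The standard argument (this is exactly how one verifies skeletons via submodels, cf. \cite{kubis, cuth}) is: let $\Gamma_M=\Gamma\cap M$; using Lemma \ref{lBasicPropertiesOfM} and Lemma \ref{lCupM}, $\Gamma_M$ is countable and up-directed, so it has a supremum $t\in\Gamma$, and $r_t[K]=\overline{\bigcup_{s\in\Gamma_M}r_s[K]}$ by condition (iii) of a retractional skeleton. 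The key claim is that $r_t$ and the quotient $q_K^M$ have the same fibres when restricted to $\overline{D\cap M}$: if $f\in\C(K)\cap M$ then, because $f$ ``factors through'' $r_s$ for cofinally many $s\in M$ (every $f\in M$ depends, up to $M$, on some separable piece $r_s X$ with $s\in M$), $f$ is constant on $r_t$-fibres of points of $D\cap M$; conversely $r_t(x)\sim_M x$ for $x\in D\cap M$ because $r_t[K]$ is metrizable and, being in $M$ by Lemma \ref{lUniqueInM}, carries a countable separating family of continuous functions inside $M$ (Lemma \ref{lMetrizableCpct}), whose pullbacks lie in $\C(K)\cap M$. Hence on $\overline{D\cap M}$ the relation $\sim_M$ coincides with ``$r_t(x)=r_t(y)$''; since $r_t$ is injective on its range and $\overline{D\cap M}$ maps into a metrizable set, one extracts that $q_K^M$ is one-to-one on $\overline{D\cap M}$.

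For (ii)$\Rightarrow$(i), the direction carrying the real weight: this is essentially Kubi\'s's construction of a skeleton from a ``rich'' family of submodels. I would take an increasing continuous chain, or rather the directed set $\Gamma$ of all suitable elementary submodels $M$ (with the fixed list of formulas and seed set), ordered by inclusion; for each such $M$ one wants a retraction $r_M:K\to \overline{D\cap M}$ with $r_M[K]=\overline{D\cap M}$. The hypothesis (ii) says $q_K^M$ is injective on $\overline{D\cap M}$, and via Lemma \ref{lCKM} one identifies $\overline{\C(K)\cap M}$ with $\C(K/_M)$; injectivity of $q_K^M$ on $\overline{D\cap M}$ means $q_K^M$ restricted to that set is a homeomorphism onto its image in $K/_M$, and composing with $q_K^M$ and this inverse gives the desired retraction $r_M=(q_K^M\!\upharpoonright_{\overline{D\cap M}})^{-1}\circ q_K^M$. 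One then checks (i)–(iii) of the definition of a retractional skeleton: metrizability of $r_M[K]=\overline{D\cap M}$ follows from $\C(K)\cap M$ being countable and separating on it; the compatibility $r_M=r_M\circ r_N=r_N\circ r_M$ for $M\subset N$ follows from $\sim_N$ refining $\sim_M$ and the identification of the fibres; the limit condition along countable chains is exactly Lemma \ref{lCupM}. Finally one must identify $D(\mathfrak s)=\bigcup_M \overline{D\cap M}$ with a set containing $D$ — it contains $D$ because every point of $D$ lies in some $M$ (enlarge the seed) — and for a general induced set one only needs $D\subset D(\mathfrak s)$, which is what is asserted.

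The main obstacle I anticipate is the bookkeeping in (ii)$\Rightarrow$(i): verifying that the family $\{r_M\}$ is genuinely indexed by an up-directed poset with the limit property, and in particular that ``suitable'' can be fixed \emph{once} (a single finite list of formulas and a single countable seed) so that all the absoluteness facts — $\Gamma\cap M$ directed, $r_t\in M$, countable separating families landing in $M$, and the continuity-of-limits clause — hold uniformly. This is precisely the point where the paper leans on \cite{cuth, kubis, kubisKniha}, and I would present it by citing Lemma \ref{lCupM}, Lemma \ref{lBasicPropertiesOfM}, Lemma \ref{lUniqueInM} and Lemma \ref{lCKM} rather than re-deriving the submodel calculus.
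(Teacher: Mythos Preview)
Your plan follows the paper's approach closely, but two steps need repair.

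In (i)$\Rightarrow$(ii) you assert that $r_t[K]$, with $t=\sup(\Gamma\cap M)$, is ``in $M$ by Lemma \ref{lUniqueInM}'' and then apply Lemma \ref{lMetrizableCpct} to it. This fails: $t$ is typically \emph{not} in $M$, so neither $r_t$ nor $r_t[K]$ need belong to $M$. The paper circumvents this by choosing an increasing sequence $s_0<s_1<\cdots$ in $\Gamma\cap M$ with $\sup_n s_n=t$. Since $x,y\in\overline{D\cap M}\subset r_t[K]$, one has $x=\lim_n r_{s_n}(x)$ and $y=\lim_n r_{s_n}(y)$, so $r_{s_n}(x)\neq r_{s_n}(y)$ for some $n$; now $r_{s_n}[K]\in M$ (because $s_n\in M$ and $r\in M$), and Lemma \ref{lMetrizableCpct} applied to this metrizable compactum yields $f\in\C(r_{s_n}[K])\cap M$ separating $r_{s_n}(x)$ from $r_{s_n}(y)$, whence $f\circ r_{s_n}\in\C(K)\cap M$ separates $x$ from $y$.

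In (ii)$\Rightarrow$(i) your formula $r_M=(q_K^M\!\upharpoonright_{\overline{D\cap M}})^{-1}\circ q_K^M$ defines a map into $\overline{D\cap M}$ only if $q_K^M[\overline{D\cap M}]=K/_M$; you only claim it is a homeomorphism onto its image. The paper proves this surjectivity as a separate step: using Lemma \ref{lCKM}, a basis of $K/_M$ consists of sets $\psi^{-1}(U)$ with $\psi\circ q_K^M\in\C(K)\cap M$ and $U$ a rational open interval, and an elementarity argument (density of $D$ in $K$ reflects to $M$) shows that each such basic set meets $q_K^M[D\cap M]$. Without this, $r_M$ is not defined on all of $K$.

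For (iii), your plan to restrict an arbitrary skeleton to the indices $\{s:r_s[K]\subset D\}$ and verify cofinality is workable but unnecessary. The paper simply notes that the skeleton built in (ii)$\Rightarrow$(i) has induced set $\bigcup_M\overline{D\cap M}$; since each $D\cap M$ is countable and $D$ is countably closed, every $\overline{D\cap M}$ lies in $D$, giving $D(\mathfrak s)=D$ at once. (Equivalently: $D$ is dense in $D(\mathfrak s)$, $D(\mathfrak s)$ is Fr\'echet--Urysohn by Theorem \ref{tRetractFrechet}, and $D$ is sequentially closed, so $D=D(\mathfrak s)$ for \emph{any} skeleton witnessing (i); Lemma \ref{lUnique} is not needed.)
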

\begin{proof}
First, let us suppose that (i) holds. Without loss of generality we assume that $D = D(\mathfrak{s})$ is induced by an $r$-skeleton $\{r_s\}_{s\in\Gamma}$ in $K$. Define a mapping $r:\Gamma\to\C(K)$ by $r(s) = r_s$. Fix formulas $\varphi_1,\ldots,\varphi_n$ containing all the formulas (and their subformulas) marked by $(*)$ in the proof below and a countable set $Y\supset \{D,K,\Gamma,r\}$ such that whenever $M\prec(\varphi_1,...,\varphi_n;\; Y)$, all the results mentioned above hold for $M$. Fix some $x,y\in\ov{D\cap M}$, $x\neq y$. Using the the absoluteness of the formula (and its subformulas)
\begin{flalign*}
    (*) & & \forall u,v\in \Gamma\;\exists w\in\Gamma\;w\geq u,v, & &
 \end{flalign*}
the set $(\Gamma\cap M)$ is up-directed. Thus, there exists $t = \sup(\Gamma\cap M)$. Using the absoluteness of the formula (and its subformulas)
\begin{flalign*}
    (*) & & \forall x\in D\;\exists s\in\Gamma\;x\in r_s[K], & &
 \end{flalign*}
$D\cap M\subset r_t[K]$. Thus, $\ov{D\cap M}\subset r_t[K]$. Now we find a sequence $s_0 < s_1 \cdots$ in $\Gamma\cap M$ such that $\sup_{n\in\omega}s_n = t$. Then $x = \lim_{n\to\infty} r_{s_n}(x)$ and $y = \lim_{n\to\infty} r_{s_n}(y)$. There exists an $n\in\en$ such that $r_{s_n}(x)\neq r_{s_n}(y)$. By Lemma \ref{lMetrizableCpct}, there is a function $f\in \C(r_{s_n}[K])\cap M$ such that $f(r_{s_n}(x))\neq f(r_{s_n}(y))$. By Lemma \ref{lBasicPropertiesOfM}, $r(s_n) = r_{s_n}\in M$. Now, using the absoluteness of the formula (and its subformula)
\begin{flalign*}
    (*) & & \forall f,g\in \C(K)\;\exists h\in\C(K)\quad(h = f\circ g), & &
 \end{flalign*}
$g = f\circ r_{s_n}\in \C(K)\cap M$ and $g(x)\neq g(y)$.

In order to prove (ii)$\Rightarrow$(i), fix formulas $\varphi_1,\ldots,\varphi_n$ and a countable set $Y$ such that whenever $M\prec(\varphi_1,...,\varphi_n;\; Y)$, $q_K^M$ is one-to-one on $\ov{D\cap M}$, all the statements mentioned above about suitable models hold and all the formulas (and their subformulas) marked by $(*)$ below are absolute for $M$. We can without loss of generality assume that $K,D\in Y$ (if not, we just put $Y' = Y\cup \{K,D\}$). Fix $M\prec(\varphi_1,...,\varphi_n;\; Y)$. In the following we write $q^M$ instead of $q_K^M$. Observe that $q^M[D\cap M]$ is a dense subset of $K/_M$.

Indeed, using Lemma \ref{lCKM} it is not difficult to show that
$$\{\psi^{-1}(U):\;\psi\in\C(K/_M),\;\psi\circ q^M\in\C(K)\cap M,\;U\text{ is an open rational interval}\}$$
is a basis of $K/_M$. Now if we take an open rational interval $U$ and a function $\psi\in\C(K/_M)$ such that $\psi\circ q^M\in\C(K)\cap M$, then by the denseness of $q[D]$ in $K/_M$ there is a $d\in D$ such that $q^M(d)\in \psi^{-1}(U)$. Thus, \begin{flalign*}
    (*) & & \exists d\in D\quad\psi(q^M(d))\in U. & &
 \end{flalign*}
 Using the elementarity of $M$, there is a $d\in D\cap M$ such that $\psi(q^M(d))\in U$. Hence, $q^M[D\cap M]\cap \psi^{-1}(U)\neq\emptyset$.

It follows that $q^M[\ov{D\cap M}] = K/_M$. If we denote $j^M = (q^M\upharpoonright_{\ov{D\cap M}})^{-1}$, then $j^M$ is a homeomorphism of $\ov{D\cap M}$ and $K/_M$. It follows that $r_M = j^M\circ q^M:K\to\ov{D\cap M}$ is a retraction onto.

By Theorem \ref{tCountModel}, there exists a set $R\supset Y\cup K\cup\{U:\;U\text{ is an open set in }K\}$ such that $\varphi_1,...,\varphi_n$ are absolute for $R$. It follows from the proof of Theorem \ref{tCountModel} (see {\cite[Theorem IV.7.8]{kunen}}) that for every countable set $Z\subset R$ there exists an $M\subset R$ such that $M\prec(\varphi_1,...,\varphi_n;\; Z)$. Hence, by Lemma \ref{lCupM},
$$\Gamma = \{M\subset R:\;M\prec(\varphi_1,...,\varphi_n;\; Y)\}$$
is a nonempty and up-directed set where the supremum of every increasing countable chain exists. We will verify that $\{r_M\}_{M\in\Gamma}$ is the retractional skeleton we are looking for.

Observe that $f(r_M(x)) = f(x)$ for every $f\in\ov{\C(K)\cap M}$ and $x\in K$. Indeed, every $f\in\ov{\C(K)\cap M}$ equals $\psi\circ q^M$ for some $\psi\in\C(K/_M)$. It follows that for every $x\in K$
$$f(r_M(x)) = \psi (q^Mr_M(x)) = \psi (q^Mj^Mq^M(x)) = \psi (q^M(x)) = f(x).$$
Moreover, as $q^M$ is one-to-one on $\ov{D\cap M}$, $\C(K)\cap M$ separates points of $\ov{D\cap M}$.

Fix some $M\in\Gamma$. The set $r_M[K] = \ov{D\cap M}$ is homeomorphic to $K/_M$; hence, it is metrizable. In order to verify (i) from the definition of a retractional skeleton, fix $x\in K$ and an open set $U\ni x$. Find $M\in\Gamma$ such that $x,U\in M$. Using the absoluteness of the formula (and its subformula)
\begin{flalign*}
    (*) & & \exists f\in\C(K)\quad(f(x) = 0\;\wedge\;\forall y\in U\;f(y) = 1), & &
 \end{flalign*}
for every $M\subset N\in\Gamma$ there is $f\in\C(K)\cap N$ such that $f(x) = 0$ and $f(y) = 1$ for $y\notin U$. Find a point $d\in\ov{D\cap N}$ such that $q^N(d) = q^N(x)$. Then $r_N(x) = d\in U$ (otherwise $f(r_N(x)) = 1$ which would be a contradiction because $f(r_N(x)) = f(x)$). Consequently, $x = \lim_{M\in\Gamma}r_M(x)$.

To verify (ii) from the definition of a retractional skeleton, fix $M\subset N$ from $\Gamma$. Then it is obvious that $r_N(r_M(x)) = r_M(x)$. Let us take a function $g\in\ov{\C(K)\cap M}\subset \ov{\C(K)\cap N}$ and a point $x\in K$. Then, by the argument above,
$$g(r_M(x)) = g(x) = g(r_N(x)) = g(r_M(r_N(x))).$$
As $\C(K)\cap M$ separates points of $\ov{D\cap M}$, $r_M(x) = r_M(r_N(x)$ holds as well.

Finally, take $M_0\subset M_1\subset \ldots$ in $\Gamma$, $M = \bigcup_{n\in\omega}M_n$ and $x\in K$. Fix $f\in\C(K)\cap M$ and find $n\in\en$ such that $f\in\C(K)\cap M_n$. It follows that for every $k\geq n$, $f(r_M(x)) = f(x) = f(r_{M_k}(x))$. Consequently, $\lim_{n\to\infty}f(r_{M_n}(x)) = f(r_M(x))$ for every $f\in\C(K)\cap M$; hence, for every $f\in\ov{\C(K)\cap M}$. By Lemma \ref{lCKM} and the fact that $\ov{D\cap M}$ is homeomorphic with $K/_M$, we may identify $\ov{\C(K)\cap M}$ with $\C(\ov{D\cap M})$ and $\lim_{n\to\infty}f(r_{M_n}(x)) = f(r_M(x))$ for every $f\in\C(\ov{D\cap M})$. It follows that $\lim_{n\to\infty} r_{M_n}(x) = r_M(x)$.

We have verified that $\mathfrak{s} = \{r_M\}_{M\in\Gamma}$ is a retractional skeleton. Obviously, $D(\mathfrak{s}) = \bigcup_{M\in\Gamma}\ov{D\cap M}\supset D$ and $D = D(\mathfrak{s})$ if $D$ is a countably closed set.
\end{proof}
We end this section with two lemmas. Those are similar statements to {\cite[Lemma 2.8]{kalenda2}} and {\cite[Lemma 6]{kalenda1}}. In proofs we use the method of elementary submodels (namely Theorem \ref{tRetractModel}).
\begin{lemma}\label{lQuotient}Let $K$ be a compact space and $F\subset K$ be a metrizable closed set. Put $L = K\setminus F\cup\{F\}$ endowed with the quotient topology induced by the mapping $Q:K\to L$ defined by
$$Q(x) = \left\{
\begin{array}{ll}
	x & x\in K\setminus F\\
	F & x\in F.
\end{array}\right.$$
If $D$ is induced by an $r$-skeleton in $L$ and $Q^{-1}(D)$ is dense in $K$, then $Q^{-1}(D)$ is induced by an $r$-skeleton in $K$.\end{lemma}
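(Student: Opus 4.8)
The plan is to apply Theorem~\ref{tRetractModel} with the set $D$ there replaced by $Q^{-1}(D)$. The first step is to observe that $Q^{-1}(D)$ is countably closed: $D$ is countably closed in $L$ by Theorem~\ref{tRetractFrechet}(i), and if $C\subset Q^{-1}(D)$ is countable then $Q[\ov C]\subset\ov{Q[C]}\subset D$ since $Q[C]\subset D$ is countable, whence $\ov C\subset Q^{-1}(D)$. As $Q^{-1}(D)$ is moreover dense in $K$ by hypothesis, Theorem~\ref{tRetractModel} reduces the lemma to the following claim: for every suitable elementary submodel $M$, the quotient map $q_K^M$ is one-to-one on $\ov{Q^{-1}(D)\cap M}$.

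So fix a suitable elementary submodel $M$; we may assume $K,L,F,Q$, together with the retractional skeleton inducing $D$ in $L$ and its index set, all lie in $M$, so that in particular $M$ is suitable for the implication $(i)\Rightarrow(ii)$ of Theorem~\ref{tRetractModel} applied to $L$ and $D$. Pick $x,y\in\ov{Q^{-1}(D)\cap M}$ with $x\sim_M y$; we must show $x=y$. Two observations. First, since $Q\in M$ is a function, $Q[M]\subset M$ by Lemma~\ref{lBasicPropertiesOfM}(i), and $Q[Q^{-1}(D)]\subset D$, so $Q[Q^{-1}(D)\cap M]\subset D\cap M$; since $Q$ is continuous it follows that $Q(x),Q(y)\in\ov{D\cap M}$. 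Second, for each $g\in\C(L)\cap M$ the function $g\circ Q\in\C(K)$ is the unique $h\in\C(K)$ with $h=g\circ Q$, so $g\circ Q\in\C(K)\cap M$ by Lemma~\ref{lUniqueInM}, and since $x\sim_M y$ this gives $g(Q(x))=g(Q(y))$; as $g$ was arbitrary, $Q(x)\sim_M Q(y)$ in $L$. Because $q_L^M$ is one-to-one on $\ov{D\cap M}$ by Theorem~\ref{tRetractModel}, we conclude $Q(x)=Q(y)$.

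It remains to deduce $x=y$ from $Q(x)=Q(y)$. If this common value lies in $K\setminus F$, then $x=Q(x)=Q(y)=y$ because $Q$ restricts to the identity on $K\setminus F$. Otherwise $Q(x)=Q(y)=F$, i.e.\ $x,y\in F$, and here the metrizability of $F$ enters: there is a countable family $A\subset\C(K)$ whose restrictions to $F$ separate the points of $F$ (take a countable dense subset of the separable space $\C(F)$ and extend each of its members to $K$ by Tietze's theorem). Such a family exists, so by absoluteness (recall $K,F\in M$) and Lemma~\ref{lBasicPropertiesOfM}(ii) one may choose $A\subset M$, so that $A\subset\C(K)\cap M$ separates the points of $F$. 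Since $x\sim_M y$, this forces $x=y$, which completes the proof.

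I expect the step needing the most care to be the last one, the collapsed point: one must produce genuine members of $\C(K)\cap M$ — not merely of $\C(F)\cap M$ — that detect the metric structure of $F$, and this is exactly where the hypothesis that $F$ is metrizable is used, in combination with the absoluteness of the corresponding existential statement, in the spirit of Lemma~\ref{lMetrizableCpct}. Everything else is the formal reduction via Theorem~\ref{tRetractModel} together with routine manipulations of elementary submodels.
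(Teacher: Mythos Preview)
Your proof is correct and follows essentially the same route as the paper's: reduce to Theorem~\ref{tRetractModel}, push the relation $\sim_M$ through $Q$ to $L$ (using $Q[M]\subset M$) to conclude $Q(x)=Q(y)$, and then in the collapsed case invoke a countable family in $\C(K)\cap M$ separating points of $F$ obtained by elementarity from the metrizability of $F$. The only cosmetic difference is that you argue the contrapositive (from $x\sim_M y$ to $x=y$) while the paper starts with $x\neq y$ and produces a separating function.
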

\begin{proof}Let us fix a suitable elementary submodel $M$ such that $Q,K,F\in M$. Notice that $Q^{-1}(D)$ is countably closed. Thus, by Theorem \ref{tRetractModel}, it is enough to verify that $q_K^M$ is one-to-one on $\overline{Q^{-1}(D)\cap M}$. Fix two distinct points $x,y\in \overline{Q^{-1}(D)\cap M}$. Then (in the last inclusion we use Lemma \ref{lBasicPropertiesOfM})
$$Q(x),Q(y)\in Q(\overline{Q^{-1}(D)\cap M})\subset \overline{Q(Q^{-1}(D)\cap M)}\subset \overline{D\cap Q(M)}\subset \overline{D\cap M}.$$

We distinguish two cases. If $Q(x)\neq Q(y)$, then by the assumption and Theorem \ref{tRetractModel}, there exists a function $f\in\C(L)\cap M$ such that $f(Q(x))\neq f(Q(y))$. Using the elemetarity of $M$, $f\circ Q\in \C(K)\cap M$. Thus, the mapping $f\circ Q$ is the witness of the fact that $q_K^M(x)\neq q_K^M(y)$.

If $Q(x)= Q(y)$, then $x,y\in F$. By the elementarity of $M$, there is a countable set $S\in M$, $S\subset \C(K)$ such that $S$ separates the points of $F$. By Lemma \ref{lBasicPropertiesOfM}, $S\subset M$. Consequently, there exists a function $f\in C(K)\cap M$ such that $f(x)\neq f(y)$; hence, $q_K^M(x)\neq q_K^M(y)$.\end{proof}

\begin{lemma}\label{lQuotient2}Let $X$ be a Banach space and $Y$ its subspace such that $X/Y$ is separable. Let $i$ denote the injection of $Y$ into $X$ and $i^*$ its adjoint mapping. Let $K$ be a $w^*$-compact subset of $X^*$ and let $D\subset i^*(K)$ be a set induced by an $r$-skeleton in $i^*(K)$. If $(i^*)^{-1}(D)\cap K$ is dense in $K$, then the set $(i^*)^{-1}(D)\cap K$ is induced by an $r$-skeleton in $K$.
\end{lemma}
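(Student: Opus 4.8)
The plan is to follow the proof of Lemma~\ref{lQuotient} almost step by step, again through Theorem~\ref{tRetractModel}. Put $D' = (i^*)^{-1}(D)\cap K$. First I would check that $D'$ is countably closed: if $C\subset D'$ is countable, then $i^*(C)$ is a countable subset of $D$, so $\overline{i^*(C)}\subset D$ because $D$ is countably closed by Theorem~\ref{tRetractFrechet}(i); since $i^*$ is $w^*$-to-$w^*$ continuous we get $i^*(\overline{C})\subset\overline{i^*(C)}\subset D$, and $\overline{C}\subset K$ by $w^*$-compactness of $K$, hence $\overline{C}\subset D'$. As $D'$ is dense in $K$ by hypothesis, Theorem~\ref{tRetractModel} reduces the claim to showing that for every suitable elementary submodel $M$ the canonical quotient map $q_K^M$ is one-to-one on $\overline{D'\cap M}$.

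Next I would fix a suitable elementary submodel $M$ with $X,Y,i,i^*,K,D$ (and hence also $i^*(K)$) among its elements, chosen suitable as well for the finitely many formulas needed to apply Theorem~\ref{tRetractModel} inside $i^*(K)$ with dense subset $D$. Since $i^*\in M$ is a function, $i^*(M)\subset M$ by Lemma~\ref{lBasicPropertiesOfM}, and consequently $i^*(D'\cap M)\subset D\cap M$ (using $i^*(D')\subset D$). Let $x\neq y$ be two points of $\overline{D'\cap M}$. By $w^*$-continuity of $i^*$, $i^*(x),i^*(y)\in\overline{i^*(D'\cap M)}\subset\overline{D\cap M}$, the closures being taken in $i^*(K)$. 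Now I would distinguish two cases. If $i^*(x)\neq i^*(y)$, then, since $D$ is induced by an $r$-skeleton in $i^*(K)$, Theorem~\ref{tRetractModel} gives that $q_{i^*(K)}^M$ is one-to-one on $\overline{D\cap M}$; hence there is $f\in\C(i^*(K))\cap M$ with $f(i^*(x))\neq f(i^*(y))$, and by elementarity of $M$ the function $k\mapsto f(i^*(k))$ lies in $\C(K)\cap M$ and witnesses $q_K^M(x)\neq q_K^M(y)$.

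The remaining case $i^*(x)=i^*(y)$ is where the real work lies. Here $x-y$ annihilates $Y$, i.e.\ $x-y\in Y^\perp$, and I would use the separability of $X/Y$: fix a countable set $\{x_n\}_{n\in\en}\subset X$ whose images are dense in $X/Y$. Under the canonical identification $Y^\perp\cong(X/Y)^*$, any $z^*\in Y^\perp\setminus\{0\}$ corresponds to a nonzero functional on $X/Y$, which is therefore nonzero at the image of some $x_n$; thus $\langle x_n,z^*\rangle\neq 0$ for some $n$. Hence the countable family $S=\{\,k\mapsto\langle x_n,k\rangle:\ k\in K,\ n\in\en\,\}\subset\C(K)$ separates every pair of points of $K$ whose difference lies in $Y^\perp$, so some $f\in S$ has $f(x)\neq f(y)$. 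Since such a countable separating family exists, by elementarity one exists in $M$, and it is then contained in $M$ by Lemma~\ref{lBasicPropertiesOfM}; so $f\in\C(K)\cap M$ and again $q_K^M(x)\neq q_K^M(y)$.

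I expect the main obstacle to be exactly this last step: recognizing that two distinct points in a common $i^*$-fiber differ by an element of $Y^\perp\cong(X/Y)^*$, and that — $X/Y$ being separable — a fixed countable family of $w^*$-continuous coordinate functionals coming from $X$ already separates such pairs and can be captured inside the elementary submodel. Once that observation is in place, the rest is a routine transcription of the proof of Lemma~\ref{lQuotient}.
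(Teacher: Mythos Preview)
Your proposal is correct and follows essentially the same route as the paper's proof: reduce to Theorem~\ref{tRetractModel}, check countable closedness of $D'$, then use elementarity to separate points, splitting into the two cases $i^*(x)\neq i^*(y)$ and $i^*(x)=i^*(y)$, the latter handled via $Y^\perp\cong(X/Y)^*$ and a countable set in $X$ whose cosets are dense in $X/Y$. The only cosmetic difference is that the paper fixes the countable set $S\subset X$ in advance and puts $S$ itself into $M$, whereas you argue by elementarity that some countable separating family lies in $M$; both are standard moves and yield the same conclusion.
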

\begin{proof}Let us denote by $Q$ the canonical quotient mapping from $X$ onto $X/Y$. Then there is a countable set $S\subset X$ such that $Q(S)$ is dense in $X/Y$. By Theorem \ref{tRetractModel}, it is sufficient to prove that for every suitable elementary submodel $M$ such that $S, Y, K, X, i^* \in M$, the mapping $q_{K}^M$ is one-to-one on $\ov{(i^*)^{-1}(D)\cap M}\cap K$. Fix two distinct points $x^*,y^*\in \ov{(i^*)^{-1}(D)\cap M}\cap K$. Then (in the last inclusion we use Lemma \ref{lBasicPropertiesOfM})
$$i^*(x),i^*(y)\in i^*(\overline{(i^*)^{-1}(D)\cap M})\subset \overline{i^*((i^*)^{-1}(D)\cap M)}\subset \overline{D\cap i^*(M)}\subset \overline{D\cap M}.$$

We distinguish two cases. If $i^*(x^*)\neq i^*(y^*)$, then by the assumption and Theorem \ref{tRetractModel}, there exists a function $f\in\C(i^*(K))\cap M$ such that $f(i^*(x^*))\neq f(i^*(y^*))$. Using the elemetarity of $M$, $f\circ i^*\in \C(K)\cap M$. Thus, the mapping $f\circ i^*$ is the witness of the fact that $q_K^M(x^*)\neq q_K^M(y^*)$.

If $i^*(x^*) = i^*(y^*)$, then $0\neq x^*-y^*\in Y^\bot$. Using the fact that $Q(S)$ is dense in $X/Y$, there exists a point $z\in S\subset M$ such that $x^*-y^*(z)\neq 0$. Thus, the point $z\upharpoonright_{K}\in \C(K)\cap M$ is the witness of the fact that $q_K^M(x^*)\neq q_K^M(y^*)$.
\end{proof}

\section{Auxiliary results}

First, we give statements required in the prove of Theorem \ref{tMain1}. We begin with a lemma which is well known.

\begin{lemma}\label{lMetriz}Let $C\subset (X^*,w^*)$ be a countable compact. Then $\overline{\conv}^{w^*}C$ is metrizable.\end{lemma}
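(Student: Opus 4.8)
The plan is to exhibit on $K:=\ov{\conv}^{w^*}C$ a coarser metrizable Hausdorff topology: since $K$ is $w^*$-compact, such a topology must agree with the $w^*$-topology, and metrizability follows. Concretely, I will produce a sequence $(x_n)_n$ in $X$ for which the evaluation functionals $\widehat{x_n}\colon x^*\mapsto x^*(x_n)$ separate the points of $K$; the map $e\colon K\to\er^\en$, $e(x^*)=(x^*(x_n))_n$, is then a $w^*$-continuous injection of a $w^*$-compact set into a Hausdorff space, hence a homeomorphism onto a metrizable subspace of $\er^\en$.

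The first ingredient is that $\C(C)$ is separable. Indeed, $C$ is a countable compact Hausdorff space, hence normal; applying Urysohn's lemma to each of the countably many pairs of distinct points of $C$ gives a countable subset of $\C(C)$ separating the points of $C$, and the $\mathbb{Q}$-subalgebra it generates together with the constant functions is a countable set that is dense in $\C(C)$ by the Stone--Weierstrass theorem. (Equivalently, a countable compact Hausdorff space is metrizable, and $\C$ of a compact metric space is separable.)

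Next, for $x\in X$ let $\widehat x$ denote the restriction to $K$ of the $w^*$-continuous functional $x^*\mapsto x^*(x)$. As $\widehat x$ is $w^*$-continuous and affine and $\conv C$ is $w^*$-dense in $K$, we get $\sup_K|\widehat x|=\sup_{\conv C}|\widehat x|=\sup_C|\widehat x|$, so $\widehat x\mapsto \widehat x\upharpoonright_C$ is a linear isometry from $A:=\{\widehat x:\ x\in X\}\subset\C(K)$ onto $\{\widehat x\upharpoonright_C:\ x\in X\}\subset\C(C)$. The latter, being a subset of the separable space $\C(C)$, is separable, hence $A$ is separable; fix $(x_n)_n$ in $X$ with $(\widehat{x_n})_n$ dense in $A$. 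By Hahn--Banach the elements of $X$ separate the points of $X^*$, so $A$ separates the points of $K$, and therefore so does the dense sequence $(\widehat{x_n})_n$. This is exactly what is needed for the map $e$ above to be injective, and the argument concludes as indicated. (The case of finite $C$ is trivial, and the argument above covers it as well since $\C(C)$ is then finite-dimensional.)

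I expect the only real subtlety to be the passage from $C$ to its closed convex hull. The naive attempt — pick a countable $M\subset X$ inducing the $w^*$-topology on the metrizable set $C$ and work with $M$ — fails, because such an $M$ need not separate the points of $\ov{\conv}^{w^*}C$. The third step circumvents this: the supremum of an evaluation functional over $\ov{\conv}^{w^*}C$ equals its supremum over $C$, so separability of the pertinent system of evaluation functionals, and hence metrizability, is automatically inherited by the closed convex hull. Everything else is routine.
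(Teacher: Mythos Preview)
Your argument is correct. The key observation---that $\sup_K|\widehat x|=\sup_C|\widehat x|$ because $|\widehat x|$ is convex and $w^*$-continuous---gives the isometry $A\hookrightarrow \C(C)$ cleanly, and from there the separating sequence and the embedding into $\er^\en$ are routine. Your remark about the failure of the ``naive'' approach is well taken and shows you understand where the content of the lemma lies.

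The paper's proof is different and shorter, though it invokes a heavier fact: since $C$ is countable compact it is metrizable, so $P(C)$ is a metrizable compact; the barycenter map $P(C)\to X^*$, $\mu\mapsto\int_C \mathrm{id}\,d\mu$, is $w^*$-continuous and its image is exactly $\ov{\conv}^{w^*}C$ (this is the content of \cite[Lemma 4]{kalenda}); a continuous Hausdorff image of a metrizable compact is metrizable. So the paper routes the argument through probability measures and a general representation of the closed convex hull, while you stay inside $\C(K)$ and exploit separability of the evaluation functionals directly. Your proof is more self-contained (no barycenter machinery needed), whereas the paper's is a two-line application of a standard tool; both hinge on the same underlying reason, namely that the affine structure lets one recover everything about $\ov{\conv}^{w^*}C$ from the countable set $C$.
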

\begin{proof}As $C$ is countable compact, it is metrizable. Hence, $P(C)$ is metrizable. Now we observe (see {\cite[Lemma 4]{kalenda}}) that $\overline{\conv}^{w^*}C$ is a continuous image of a metrizable compact space $P(C)$; thus, it is metrizable as well (see {\cite[Theorem 4.4.15]{eng}}).\end{proof}

The following lemma and theorem were proved in the context of Valdivia compact spaces in {\cite[Proposition 3 and Theorem 1]{kalenda}}. In \cite{hajek} there are given proofs which work even for the setting of spaces from the class $\R_0$. Proofs contain some arguments which are not necessary, so we give simplified ones.

\begin{lemma}[cf. {\cite[Lemma 5.52]{hajek}}]\label{lNotRetract}Let $X$ be a Banach space such that $[0,\omega_1]$ embeds into $(B_{X^*},w^*)$. Let us have a point $e\in X$ and $\eps > 0$. Then there exists a $w^*$-compact and convex set $L\subset \{x^*\in X^*:\;x^*(e) = 0\}\cap \eps B_{X^*}$ that does not have a retractional skeleton.
\end{lemma}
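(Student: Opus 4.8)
The goal is to build, from a copy of $[0,\omega_1]$ sitting inside $(B_{X^*},w^*)$, a convex $w^*$-compact set avoiding $e$ and small in norm that fails to have a retractional skeleton. The plan is to first normalize the embedded copy of $[0,\omega_1]$, then take a convex combination that kills the functional $e$ and shrinks the norm, and finally invoke Corollary \ref{cNotSkeleton} to rule out a retractional skeleton.

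\textbf{Step 1: normalize the embedded ordinal segment.} Let $h:[0,\omega_1]\to (B_{X^*},w^*)$ be a homeomorphic embedding. Write $x^*_\alpha = h(\alpha)$ for $\alpha\le\omega_1$. Every $x^*_\alpha$ with $\alpha<\omega_1$ is a $G_\delta$ point of $[0,\omega_1]$, and $h$ being a homeomorphism onto its image, the point $x^*_\alpha$ is a $G_\delta$ point of the compact set $h[[0,\omega_1]]$; I will need them to be $G_\delta$ points of the whole set $L$ I construct, which will be arranged by the affine map in Step 2 being injective on $h[[0,\omega_1]]$. Note $x^*_{\omega_1} = \lim_{\alpha\to\omega_1} x^*_\alpha$ in $w^*$, and no sequence $(x^*_{\alpha_n})$ with $\alpha_n<\omega_1$ converges to $x^*_{\omega_1}$, since $[0,\omega_1)$ is not countably compact while a convergent sequence together with its limit is.

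\textbf{Step 2: an affine rescaling killing $e$ and shrinking the norm.} Fix a scalar $c$ with $x^*_{\omega_1}(e)\ne c$ if $x^*_{\omega_1}(e)\ne 0$ — more robustly, consider the continuous affine map $T:X^*\to X^*$ given by
\begin{equation*}
T(x^*) = \lambda\bigl(x^* - x^*_{\omega_1}\bigr),
\end{equation*}
where $\lambda>0$ is chosen small enough that $\|T(x^*)\|\le \eps$ for all $x^*\in h[[0,\omega_1]]$ (possible since that set is norm-bounded by $1$, so $\lambda = \eps/2$ works). This $T$ is $w^*$-$w^*$ continuous and affine, hence maps the convex $w^*$-compact hull $C := \ov{\conv}^{w^*} h[[0,\omega_1]]$ to a convex $w^*$-compact set $L := T[C]$ contained in $\eps B_{X^*}$. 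Now $T(x^*_{\omega_1}) = 0$, so $0\in L$ and $0(e)=0$; but I need \emph{every} point of $L$ to annihilate $e$, which $T$ alone does not guarantee. To fix this, replace $T$ by the map $x^*\mapsto \lambda(x^* - x^*_{\omega_1}) - \lambda\frac{(x^*-x^*_{\omega_1})(e)}{\text{(normalization)}}u^*$ for a suitable fixed $u^*$ with $u^*(e)=1$; equivalently, first project $X^*$ onto $\{y^*: y^*(e)=0\}$ along $\operatorname{span}\{u^*\}$ and then apply $T$. Since this composite is still affine and $w^*$-$w^*$ continuous, $L := (\text{projection}\circ T)[C]$ is convex, $w^*$-compact, contained in $\{x^*: x^*(e)=0\}$, and — after possibly shrinking $\lambda$ once more to absorb the norm of the projection on the compact set $C$ — contained in $\eps B_{X^*}$. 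The only remaining point requiring care is that this affine map is \emph{injective} on $h[[0,\omega_1]]$; this can be arranged because an affine map is injective on a set $A$ iff it is injective on $A-A$, and a generic small perturbation of the projection direction $u^*$ achieves this on the separable set $\ov{\operatorname{span}}(h[[0,\omega_1]] - h[[0,\omega_1]])$, or, more cleanly, one notes that it suffices to keep injectivity on the \emph{copy of the ordinal segment}, which one can ensure directly by adding back a small multiple of a $w^*$-continuous functional separating the $x^*_\alpha$'s.

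\textbf{Step 3: conclude via Corollary \ref{cNotSkeleton}.} Inside $L$ we now have the points $f_\alpha := (\text{affine map})(x^*_\alpha)$ for $\alpha<\omega_1$ and the point $x := (\text{affine map})(x^*_{\omega_1})$. By injectivity of the affine map on $h[[0,\omega_1]]$, the $f_\alpha$ are distinct and, being images of $G_\delta$ points of a compact metrizable-on-the-segment set under a homeomorphism, they are $G_\delta$ points of $L$ (here one uses that $[0,\omega_1)$ embeds homeomorphically, so the map is a homeomorphism of $[0,\omega_1]$ onto its image in $L$, and $G_\delta$-ness is preserved); similarly pick any countable sequence $g_k := f_{\beta_k}$ with $\beta_k\nearrow\beta<\omega_1$, so $x\notin\ov{\{g_k\}}$ — wait, that is wrong, so instead I apply Corollary \ref{cNotSkeleton} using for $\{g_k\}$ the sequence of $G_\delta$ points with no relation to $x$ is not needed: rather, take $\{g_k\}$ to be $G_\delta$ points of $L$ whose closure contains $x$. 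Such a sequence exists because $L$ is separable? No — $L$ need not be separable. The correct route: $x = \lim_{\alpha\to\omega_1} f_\alpha$ but also, since $L$ is the image of the convex hull, $x$ lies in the $w^*$-closure of a \emph{countable} set of $G_\delta$ points of $L$, namely rational convex combinations of $\{f_{\beta_k}\}$ for a fixed sequence $\beta_k\nearrow\omega_1$ — and these convex combinations are still $G_\delta$ points of $L$ when the affine map is injective on enough of $C$. With both hypotheses of Corollary \ref{cNotSkeleton} verified ($x\in\ov{\{g_k\}_k}\cap\ov{\{f_\alpha\}_{\alpha<\omega_1}}$ and no countable subsequence of $(f_\alpha)$ converges to $x$, inherited from $[0,\omega_1]$), we conclude $L$ has no retractional skeleton.

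\textbf{Main obstacle.} The delicate part is Step 2/3: arranging that the affine rescaling is injective on the ordinal segment (so the $f_\alpha$ remain distinct $G_\delta$ points of $L$) while simultaneously forcing $L\subset\{x^*(e)=0\}\cap\eps B_{X^*}$, and then producing a \emph{countable} set of $G_\delta$ points of $L$ clustering at $x$ without assuming $L$ is separable. The cleanest fix is to not insist on a countable set of $G_\delta$ points at all, but to restate the contradiction directly: if $L$ had a set $D$ induced by an $r$-skeleton then $D$ would be Fréchet–Urysohn, it would contain every $G_\delta$ point of $L$ hence all the $f_\alpha$ and the point $x$ (the latter because $x$ is a $G_\delta$ point of $L$ too, being the image of the $G_\delta$ point $\omega_1$ under a homeomorphism of the segment), and then $x\in\ov{\{f_\alpha : \alpha<\omega_1\}}$ with no sequence from $\{f_\alpha\}$ converging to $x$ contradicts Fréchet–Urysohn — this avoids needing the $\{g_k\}$ entirely and is essentially the proof of Corollary \ref{cNotSkeleton} specialized to our situation.
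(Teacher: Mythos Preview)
Your proposal has a genuine gap at the heart of Step~3. The ``cleanest fix'' at the end is simply false: $\omega_1$ is \emph{not} a $G_\delta$ point of $[0,\omega_1]$ (every $G_\delta$ neighbourhood of $\omega_1$ contains a closed unbounded set), so you cannot conclude $x\in D$ that way. More structurally, even for $\alpha<\omega_1$ the fact that $\alpha$ is isolated in $[0,\omega_1]$ only tells you that $f_\alpha$ is a $G_\delta$ point of the \emph{image of the segment}; it says nothing about $f_\alpha$ being a $G_\delta$ point of the convex $w^*$-compact hull $L$ itself, which is what Corollary~\ref{cNotSkeleton} and Lemma~\ref{lGDelta} require. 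Passing to a closed convex hull can destroy the $G_\delta$ property completely. The same objection kills the ``rational convex combinations'' attempt for the $g_k$'s. Since you never actually produce $G_\delta$ points of $L$, the Fr\'echet--Urysohn contradiction is never reached. (The hand-waved injectivity of the projection in Step~2 is also unresolved, but it is secondary to the above.)

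This is precisely the difficulty the paper's proof is built to overcome, and the construction there is accordingly more delicate. Rather than project onto $\ker e$, one translates so that $f_{\omega_1}=0$ and uses that real-valued continuous functions on $[0,\omega_1]$ are eventually constant to pass to a tail $[\alpha_0,\omega_1]$ on which every $f_\alpha$ already annihilates $e$ and a fixed countable family $(e_k)\subset X$. An auxiliary null sequence $(g_k)\subset\ker e\cap\eps B_{X^*}$ is then introduced with the biorthogonality $g_k(e_l)\neq 0\Leftrightarrow k=l$; each $g_k$ is therefore $w^*$-\emph{exposed} in the final set $L$ by $e_k$, hence a $G_\delta$ point of $L$, and $g_k\to 0=f_{\omega_1}$ supplies the countable sequence you were missing. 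For the uncountable family one does not keep all $f_\alpha$: using Lemma~\ref{lMetriz} and the separation theorem one selects a cofinal transfinite subsequence $(f_{i(\beta)})$ so that each $f_{i(\beta)}$ (successor $\beta$) is $w^*$-exposed in $L$ by some $y_\beta\in X$, hence again a $G_\delta$ point of $L$. Only with these exposed-point arguments in place does Corollary~\ref{cNotSkeleton} apply.
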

\begin{proof}There exists $\{f_\alpha\}_{0\leq\alpha\leq\omega_1}\subset (B_{X^*},w^*)$ which is homeomorphic to $[0,\omega_1]$. We may without loss of generality assume that $f_{\omega_1} = 0$ and $\{f_\alpha\}_{0\leq\alpha\leq\omega_1}\subset (\eps B_{X^*},w^*)$. Moreover, fix a linearly independent set of points $\{e_k\}_{k=1}^\infty\subset X$ and functionals $\{g_k\}_{k=1}^\infty\subset \eps B_{X^*}$ such that $\|g_k\|\leq \tfrac{1}{k}$, $g_k(e)=0$ and $g_k(e_l)\neq 0$ if and only if $k=l$ (such a set of points and functionals exists - it is enough to create a biorthogonal system using the ``Gram-Schmidt orthogonalization process'', see {\cite[Lemma 1.21]{hajek}}). Since $f_\alpha(e)\to 0$ and, for all $k\in\en$, $f_\alpha(e_k)\to 0$ as $\alpha\to\omega_1$, there exists an $\alpha_0 < \omega_1$ such that $f_\alpha(e)= 0$ and $f_\alpha(e_k) = 0$  for all $k\in\en$, $\alpha\in[\alpha_0,\omega_1]$. Fix $\beta\in[\alpha_0,\omega_1)$. By Lemma \ref{lMetriz}, the set $L_\beta = \overline{\conv}^{w^*}(\{g_k\}_{k=1}^\infty\cup \{f_{\alpha}\}_{\alpha\in [\alpha_0,\beta)})$ is metrizable and thus there exists $\gamma_\beta\in(\beta,\omega_1)$ such that $f_\zeta\notin L_\beta$ whenever $\gamma_\beta\leq\zeta<\omega_1$ (otherwise some uncountable set $\{f_\zeta\}\subset L_\beta$ would contain a sequence converging to $f_{\omega_1}$, which is a contradiction). By the separation theorem, choose $y_\beta\in X$ such that $\sup y_\beta(L_\beta)<f_{\gamma_\beta}(y_\beta)$. Since $\lim_{\alpha\to\omega_1}f_\alpha(y_\beta) = 0$, $f_\alpha(y_\beta)=0$ for $\alpha$ large enough. Based on the above, we inductively find an increasing set $\{i(\beta)\}_{\beta < \omega_1}\subset [\alpha_0,\omega_1)$ such that $\sup_\beta i(\beta) = \omega_1$ and for every non limit ordinal $\beta < \omega_1$ there exists $y_\beta\in X$ satisfying $0\leq \sup y_{\beta}(L_{i(\beta)})<f_{i(\beta)}(y_\beta)$ and $f_{i(\gamma)}(y_\beta) = 0$ for all $\gamma > \beta$.
Let $L = \overline{\conv}^{w^*}(\{g_k\}_{k=1}^\infty\cup \{f_{i(\beta)}\}_{\beta < \omega_1})$. Then, for every non limit ordinal $\beta < \omega_1$, $f_{i(\beta)}$ is $w^*$-exposed by $y_\beta$ in $L$; hence, it is a $w^*$-$G_\delta$ point of $L$. Similarly, for all $k\in\en$, $g_k$ is $w^*$-exposed by $e_k$ in $L$ and all the functionals $g_k$ are $w^*$-$G_\delta$ points of $L$. By Corollary \ref{cNotSkeleton}, $L$ does not have a retractional skeleton.
\end{proof}

Now we are ready to prove the following theorem, which will be used in the proof of Theorem \ref{tMain1}.

\begin{thm}[cf. {\cite[Theorem 5.51]{hajek}}]\label{tNotSkeleton}Let $\langle X,\|\cdot\|\rangle$ be a Banach space such that $[0,\omega_1]$ embeds into $(B_{X^*},w^*)$. Then there is, for any $\eps\in(0,1)$, an equivalent norm $\tn\cdot \tn$ on $X$ such that $(1-\eps)\tn\cdot\tn\leq\|\cdot\|\leq\tn\cdot\tn$ and $(B_{\langle X^*,\tn\cdot \tn\rangle},w^*)\notin \R_0$.
\end{thm}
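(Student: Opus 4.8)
The plan is to renorm $X$ so that the unit ball of the new dual norm has, near the origin, a "bad" face isomorphic to the set $L$ produced by Lemma \ref{lNotRetract}, and then deduce that the whole dual ball fails to have a retractional skeleton by invoking the hereditary behaviour recorded in Corollary \ref{cNotSkeleton} (or, equivalently, Theorem \ref{tDedicnost}). First I would fix $\eps \in (0,1)$ and choose a norm-one vector $e \in X$ (for instance a Hahn--Banach functional direction, i.e. pick $e^* \in X^*$ with $\|e^*\|=1$ and then $e$ will not literally matter — what matters is that Lemma \ref{lNotRetract} gives a $w^*$-compact convex $L \subset \{x^* : x^*(e)=0\} \cap \eps B_{X^*}$ with no retractional skeleton). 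The construction of the new norm should be of the form
$$ \tn x \tn = \max\bigl\{ \|x\|,\; \sup\{ |x^*(x)| : x^* \in L \cup \{ \pm e^* \} \}\bigr\} $$
or some close variant, arranged so that (a) $\tn\cdot\tn$ is equivalent to $\|\cdot\|$ with $(1-\eps)\tn\cdot\tn \le \|\cdot\| \le \tn\cdot\tn$ — here the two-sided estimate is exactly what forces the use of the smallness $L \subset \eps B_{X^*}$ and the normalization $\|e^*\|=1$ — and (b) the dual ball $B_{\langle X^*,\tn\cdot\tn\rangle}$ contains $L$ as a $w^*$-closed subset, indeed as (a piece of) the face where $e^*$ attains its maximum, in such a way that $L$ is the closure of a union of $G_\delta$ points of $B_{\langle X^*,\tn\cdot\tn\rangle}$ or a $G_\delta$ subset of it.

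The key steps, in order: (1) apply Lemma \ref{lNotRetract} with the given $\eps$ to obtain $L$; (2) write down the explicit renorming $\tn\cdot\tn$ and verify the norm inequalities $(1-\eps)\tn\cdot\tn \le \|\cdot\| \le \tn\cdot\tn$ by a direct computation from the definition, using $\|g_k\|,\|f_{i(\beta)}\| \le \eps$ and $\|e^*\| = 1$; (3) identify $B_{\langle X^*,\tn\cdot\tn\rangle}$ explicitly (it will be the $w^*$-closed convex hull of $B_{X^*}$ together with $L$ and $\pm e^*$, polar-computed), and locate $L$ inside it as a closed subset which is either a $G_\delta$ set or the closure of its $G_\delta$ points — the cleanest route is to show every $w^*$-exposed point of $L$ (the $g_k$ and the $f_{i(\beta)}$, which Lemma \ref{lNotRetract} exposes by $e_k$ and $y_\beta$) remains a $G_\delta$ point of the larger ball, so that $L = \ov{\{g_k\} \cup \{f_{i(\beta)}\}}$ is a closure of $G_\delta$ points of $B_{\langle X^*,\tn\cdot\tn\rangle}$; (4) conclude by Theorem \ref{tDedicnost}(i) (or directly by Corollary \ref{cNotSkeleton}, transplanting the witnessing families $\{g_k\}$ and $\{f_{i(\beta)}\}$ and the point $0$): if $B_{\langle X^*,\tn\cdot\tn\rangle}$ had a retractional skeleton, then so would $L$, contradicting Lemma \ref{lNotRetract}.

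The main obstacle I expect is step (3): one must renorm so that $L$ survives verbatim as a well-situated closed subset of the new dual ball while simultaneously keeping the norm within a factor $(1-\eps)$ of the original. In particular one has to be careful that the vectors $y_\beta$ and $e_k$ that $w^*$-expose the bad points of $L$ continue to do their job after one enlarges the ball by throwing in all of $B_{X^*}$; the smallness $L \subset \eps B_{X^*}$ is precisely the slack that makes this possible, since it guarantees that adding $L$ to $B_{X^*}$ perturbs the norm by at most a controlled amount, and dually that $L$ sits in the "thin" directions where the supporting functionals $y_\beta, e_k$ still separate. Once the geometry is set up correctly, the reduction to Lemma \ref{lNotRetract} via Corollary \ref{cNotSkeleton} / Theorem \ref{tDedicnost} is immediate; alternatively, since $L$ can be arranged to be a $G_\delta$ subset of the dual ball, Theorem \ref{tDedicnost}(ii) applies directly. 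This mirrors the argument of {\cite[Theorem 5.51]{hajek}}, with Corollary \ref{cNotSkeleton} replacing the corresponding statement about Valdivia compacta.
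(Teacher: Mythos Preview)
Your overall strategy is right --- use Lemma \ref{lNotRetract} to produce a bad $L$, insert it into a renormed dual ball, and invoke Theorem \ref{tDedicnost} --- but the concrete renorming you write down collapses. Since $L\subset \eps B_{X^*}\subset B_{X^*}$ and $\|e^*\|=1$, the set $L\cup\{\pm e^*\}$ is already contained in $B_{X^*}$, so
\[
\max\bigl\{\|x\|,\ \sup\{|x^*(x)|:x^*\in L\cup\{\pm e^*\}\}\bigr\}=\|x\|
\]
for every $x$, and your ``new'' norm is just $\|\cdot\|$. More to the point, any renorming whose dual ball contains $B_{X^*}$ will swallow $L$ into its interior, and there is no reason for $L$ to be a $G_\delta$ subset or for the individual exposed points $g_k,f_{i(\beta)}$ of $L$ to remain $G_\delta$ in the larger ball; your step (3) has no mechanism to guarantee this.

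The missing idea is a \emph{translation}: choose $h\in S_{X^*}$ with $h(e)=1$ and push $L$ out to $L+h$, then take
\[
B=\conv\bigl((L+h)\cup(-L-h)\cup(1-\eps)B_{X^*}\bigr).
\]
Because $L\subset\ker e$ and $h(e)=1$, one gets $L+h=\{f\in B:f(e)=1\}$, i.e.\ $L+h$ is the face of $B$ in the direction $e$, hence automatically a $w^*$-$G_\delta$ subset of $B$. Now Theorem \ref{tDedicnost}(ii) applies directly, with no need to track individual exposed points. The inclusions $(1-\eps)B_{X^*}\subset B\subset(1+\eps)B_{X^*}$ are immediate from $L\subset\eps B_{X^*}$ and $\|h\|=1$, and a final rescaling gives the norm inequality in the statement. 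This is exactly what the paper does.
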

\begin{proof}Let us take an arbitrary $e\in S_X$ and $\eps\in(0,1)$. Then, by Lemma \ref{lNotRetract}, there exists a $w^*$-compact and convex set $L\subset \ker(e)\cap \eps B_{X^*}$ such that $L\notin\R_0$. Let us take an arbitrary $h\in S_{X^*}$ such that $h(e) = 1$. Then
$$B = \conv\{(L+h)\cup(-L-h)\cup(1-\eps)B_{X^*}\}$$
is a convex symmetric $w^*$-compact set such that $(1-\eps)B_{X^*}\subset B\subset (1+\eps)B_{X^*}$, so there is an equivalent norm $|\cdot|$ on $X$ such that $B$ is its dual unit ball. It remains to show that $B$ does not have a retractional skeleton (then we put $\tn\cdot\tn = (1+\eps)|\cdot|$ and this finishes the proof). Observe that
$$L+h = \{f\in B:\;f(e) = 1\}.$$
Thus, $L + h$ is a $w^*$-closed $w^*$-$G_\delta$ subset of $B$ and it does not have a retractional skeleton (because $L\notin\R_0$). By Theorem \ref{tDedicnost}, $B\notin\R_0$.
\end{proof}

Now we give some preliminary results which will be used in the proof of Theorem \ref{tMain3}. The following proposition is an analogue to {\cite[Proposition 1]{kalenda1}}.

\begin{proposition}\label{pGDeltaPointsNotCorson}Let $K$ be a compact space, $G$ the set of all $G_\delta$ points of $K$. If $\ov{G}$ is not Corson, then there are points $a,b\in K$ such that $P(L)\notin\R_0$ where $L$ is the quotient space made from $K$ by identifying $a$ and $b$.
\end{proposition}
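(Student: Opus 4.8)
\textbf{Proof proposal for Proposition \ref{pGDeltaPointsNotCorson}.}

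The plan is to follow the strategy of {\cite[Proposition 1]{kalenda1}}: since $\ov{G}$ is not Corson, it must contain a ``bad'' configuration forcing an uncountable obstruction, and we will transport this obstruction into $P(L)$ for a suitable one-point identification $L = K/(a\sim b)$. First I would use the fact that $\ov{G}$ is not Corson to produce, via a standard characterization of non-Corson compacta (or by appealing to Theorem \ref{tDichotomy} applied in a form that gives a copy of $[0,\omega_1]$, or by an Argyros--Mercourakis--Negrepontis type argument), an uncountable family witnessing non-Fr\'echet--Urysohn behaviour: concretely, a point $x\in\ov{G}$ together with an uncountable set $\{f_\gamma\}_{\gamma\in\Gamma}$ of $G_\delta$ points of $K$ accumulating at $x$ such that no sequence from $\{f_\gamma\}$ converges to $x$, while simultaneously $x$ lies in the closure of a countable set $\{g_k\}$ of $G_\delta$ points. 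The point of collecting exactly this data is that Corollary \ref{cNotSkeleton} is tailor-made to conclude $M\notin\R_0$ from it, so the real work is to engineer such a configuration inside $P(L)$ rather than inside $K$ itself.

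Next I would choose the identification: pick $a = x$ (or a point closely tied to $x$) and $b$ another suitable point of $K$ — in the model case one identifies $x$ with a point coming from a second ``branch'' of the tree-like structure, so that in the quotient $L$ the corresponding Dirac measures and their averages produce the required points. The key observation is the one already used in Lemma \ref{lInPKSpaces}: if $g$ is a $G_\delta$ point of $L$ then $\delta_g$ is a $G_\delta$ point of $P(L)$ (see {\cite[Lemma 5.5]{kalendaSurvey}}). So the images $Q(f_\gamma)$, being $G_\delta$ points of $L$ (this needs the metrizability/separation that identifying $a,b$ preserves $G_\delta$-ness away from the identified pair), give $G_\delta$ points $\delta_{Q(f_\gamma)}$ of $P(L)$; likewise the $\delta_{Q(g_k)}$ are $G_\delta$ points. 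Then one exhibits a measure $\mu\in P(L)$ — typically $\mu = \tfrac12(\delta_{Q(a)}+\delta_{\text{(something)}})$ or simply $\delta_{[a]}$ where $[a]$ is the identified point — lying in the closure of both $\{\delta_{Q(g_k)}\}$ and $\{\delta_{Q(f_\gamma)}\}$, with no sequence from the latter converging to it. Applying Corollary \ref{cNotSkeleton} in $P(L)$ then yields $P(L)\notin\R_0$.

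The main obstacle I expect is twofold. First, extracting the precise combinatorial configuration from ``$\ov{G}$ not Corson'' in a form usable by Corollary \ref{cNotSkeleton}: non-Corson-ness a priori only gives that $\ov{G}$ is not a $\Sigma$-subset of itself, and one must descend to a concrete uncountable family of $G_\delta$ points of $K$ (not merely of $\ov{G}$) with the stated convergence failure — here one uses that $\ov{G}$ is the closure of $G_\delta$ points of $K$, so accumulation points in $\ov{G}$ are witnessed by nets of genuine $G_\delta$ points, and Fr\'echet--Urysohn failure for $\Sigma$-subsets forces the uncountable ``fan''. Second, verifying that after the identification the relevant points of $L$ remain $G_\delta$ and that the target measure $\mu$ genuinely sees both the countable and the uncountable family in its closure while killing all sequential convergence from the uncountable one; this is a careful but routine check of neighbourhood bases in the $w^*$-topology of $P(L)$, using that a basic neighbourhood of a measure is determined by finitely many functions in $\C(L) = \{\varphi\in\C(K):\varphi(a)=\varphi(b)\}$, so the constraint $\varphi(a)=\varphi(b)$ is exactly what collapses the separation and creates the non-metrizable fan at $\mu$.
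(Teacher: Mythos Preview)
Your approach is genuinely different from the paper's and, as written, has a real gap precisely where you flag the ``main obstacle''. You try to extract directly from ``$\ov{G}$ not Corson'' a concrete fan of $G_\delta$ points of $K$ together with a point $x$ that is simultaneously a sequential limit of $G_\delta$ points and a non-sequential accumulation point of an uncountable family, and then feed this into Corollary \ref{cNotSkeleton} inside $P(L)$. But ``not Corson'' does not hand you this configuration. Even if you invoke Theorem \ref{tDichotomy} to get a copy of $[0,\omega_1]$ inside $\ov{G}$ (which already presupposes $\ov{G}\in\R_0\subset\RC$, a case you do not isolate), the points of that copy need not be $G_\delta$ points of $K$, so the Dirac measures $\delta_{Q(f_\gamma)}$ need not be $G_\delta$ in $P(L)$, and Corollary \ref{cNotSkeleton} does not apply. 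Your proposal does not explain how to repair this.

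The paper avoids all of this by a structural argument. It splits into two cases. If $\ov{G}\notin\R_0$, take $a=b$: then $L=K$ and Lemma \ref{lInPKSpaces} gives $P(K)\notin\R_0$ immediately. If $\ov{G}\in\R_0$, let $D$ be the \emph{unique} (Corollary \ref{cUnique}) set induced by an $r$-skeleton in $\ov{G}$; since $\ov{G}$ is not Corson, Theorem \ref{tCorson} gives $D\subsetneq\ov{G}$, so pick $b\in\ov{G}\setminus D$ and a non-isolated $a\in D$. Now one shows $Q(\ov{G})\notin\R_0$ by contradiction: any set $B$ induced by an $r$-skeleton in $Q(\ov{G})$ pulls back via Lemma \ref{lQuotient} to a set induced by an $r$-skeleton in $\ov{G}$, which by uniqueness must equal $D$ --- but $b$ lies in the pullback and not in $D$. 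Since $Q(\ov{G})$ is the closure of the $G_\delta$ points of $L$, Lemma \ref{lInPKSpaces} yields $P(L)\notin\R_0$. The key idea you are missing is this use of \emph{uniqueness of the induced set} together with the pullback Lemma \ref{lQuotient}; it replaces the delicate combinatorial extraction entirely.
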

\begin{proof}We use the same idea as in {\cite[Proposition 1]{kalenda1}}. If $\ov{G}\notin\R_0$, then we can take $a=b$ due to Lemma \ref{lInPKSpaces}. Now suppose that $\ov{G}$ has a retractional skeleton. Let $D$ be the unique set induced by an $r$-skeleton in $\ov{G}$ (the set is unique by Corollary \ref{cUnique}). Choose $a\in D$ a non-isolated point and $b\in \ov{G}\setminus D$ (such a point exists due to Theorem \ref{tCorson}). Let $L$ be the quotient space made from $K$ by identifying $a$ and $b$ and let $Q$ be the quotient mapping. Then $Q(\ov{G})$ does not have a retractional skeleton.

Indeed, in order to get a contradiction let $B\subset Q(\ov{G})$ be a set induced by an $r$-skeleton in $Q(\ov{G})$. Choose in the space $\ov{G}$ open neighborhoods $U$ and $V$ of $a$ and $b$ respectively with $\ov{U}\cap \ov{V} = \emptyset$. Then $U' = Q(U\setminus\{a\})$ and $V' = Q(U\setminus\{a\})$ are disjoint open sets with $\ov{U'}\cap\ov{V'} = \{\{a,b\}\}$. By Lemma \ref{lSubsetStability}, $\{a,b\}\in B$. Lemma \ref{lQuotient} shows that $(Q\upharpoonright_{\ov{G}})^{-1}(B)$ is induced by an $r$-skeleton in $\ov{G}$. By the uniqueness of $D$, $(Q\upharpoonright_{\ov{G}})^{-1}(B) = D$. This is a contradiction, because $b\in(Q\upharpoonright_{\ov{G}})^{-1}(B)\setminus D$.

Moreover, it is clear that $G\setminus\{a\}$ is dense in $\ov{G}$ and $Q(g)$ is a $G_\delta$ point in $L$ for every $g\in G\setminus\{a\}$. Thus, $Q(\ov{G})$ is the closure of all the $G_\delta$ points in $L$. By Lemma \ref{lInPKSpaces}, $P(L)\notin\R_0$.\end{proof}

To deal with compact spaces without $G_\delta$ points we use again the same approach as in \cite{kalenda1}.

\begin{proposition}\label{pContinuousImage}Let $K$ be a compact space such that there are two disjoint homeomorphic closed nowhere dense sets $M, N\subset K$ such that $N\notin\R_0$. Then there is $L$, an at most two-to-one continuous image of $K$, such that $L\notin \R_0$. Moreover, if $N$ has a dense set of (relatively) $G_\delta$ points, then $P(L)\notin\R_0$.
\end{proposition}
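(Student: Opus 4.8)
The plan is to build $L$ as a quotient of $K$ obtained by gluing the two disjoint closed nowhere dense sets $M$ and $N$ together via a fixed homeomorphism. Let $h\colon M\to N$ be a homeomorphism, and let $L = K/{\sim}$, where $x\sim h(x)$ for $x\in M$ and no other nontrivial identifications are made; let $Q\colon K\to L$ be the quotient map. Since $M\cap N=\emptyset$, the map $Q$ identifies each point of $M$ with exactly one point of $N$, so $Q$ is at most two-to-one, and $L$ is a continuous image of the compact space $K$, hence compact Hausdorff (one checks the quotient is Hausdorff using that $M,N$ are closed and disjoint). Write $N' = Q(N) = Q(M)\subset L$; then $Q\upharpoonright_N\colon N\to N'$ is a homeomorphism (since $N$ receives no extra identifications), so $N'$ is homeomorphic to $N$ and therefore $N'\notin\R_0$.

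The key step is to show $L\notin\R_0$, i.e. that having the retractional skeleton on a closed homeomorphic copy of $N$ inside $L$ prevents $L$ from having one. The set $N'$ is closed and nowhere dense in $L$: it is the image under $Q$ of the closed set $M\cup N$, which is nowhere dense in $K$. First I would argue that $N'$ is a retract of $L$, or at least that $N'$ sits inside $L$ in a way that transfers a hypothetical skeleton of $L$ to $N'$. Concretely, if $L\in\R_0$, let $D$ be induced by an $r$-skeleton in $L$; by Theorem~\ref{tRetractFrechet}(i) the closure of any countable subset of $D$ is metrizable and contained in $D$, and in particular $N'\cap D$ is dense in $N'$ would follow once we know $N'$ is, say, a $G_\delta$ set or the closure of $G_\delta$ points of $L$ — which is exactly where the ``moreover'' hypothesis enters. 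If $N$ has a dense set of relatively $G_\delta$ points, I would want to show these become $G_\delta$ points of $L$ (or that $P(L)$ inherits corresponding $G_\delta$ points), so that by Lemma~\ref{lGDelta} they lie in the relevant induced set, and then by Lemma~\ref{lClosedSubset} (applied to the closed subset $N'\subset L$, with $D\cap N'$ dense in $N'$) we would get that $N'$ has a retractional skeleton, contradicting $N'\cong N\notin\R_0$. For the ``$P(L)\notin\R_0$'' conclusion the argument is parallel: identifying $N'$ with a closed subset of $P(L)$ via Dirac measures as in the proof of Lemma~\ref{lInPKSpaces}, the $G_\delta$ points of $L$ lying in $N'$ give $G_\delta$ points of $P(L)$ in (a homeomorphic copy of) $N'$, so a skeleton on $P(L)$ would restrict to one on $N'$ by Lemmas~\ref{lGDelta} and~\ref{lClosedSubset}.

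For the bare statement ($L\notin\R_0$ without the measure hypothesis), I would instead exploit the structure of $Q$ directly: the map $Q$ glues a copy of $N$ to a copy of $M$, and I expect the right tool is Lemma~\ref{lQuotient2}-style reasoning or the dichotomy Theorem~\ref{tDichotomy} combined with the observation that $\R_0\subset\RC$ and $\RC$ is closed under continuous images — but that would only show $L\in\RC$, not the non-membership we want. The cleanest route is: suppose $D$ is induced by an $r$-skeleton in $L$; consider the closed set $N'\subset L$; using Lemma~\ref{lSubsetStability} or Lemma~\ref{lClosedSubset} one needs $D\cap N'$ dense in $N'$. To get that density without $G_\delta$ points I would pull back along $Q$: the set $Q^{-1}(D)$, or rather its behaviour near $M\cup N$, should force enough of $D$ into $N'$ because $N'$ is the ``seam'' of the gluing and points of $N'$ are limits of points of $L\setminus N'$ from both the $M$-side and the $N$-side, which a Fr\'echet--Urysohn argument à la Corollary~\ref{cNotSkeleton} can exploit.

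The main obstacle I anticipate is exactly this last point: showing $N'\cap D$ is dense in $N'$ (equivalently that $N'$ inherits the skeleton) without assuming $G_\delta$ points, since the general hereditarity results available (Lemma~\ref{lClosedSubset}, Lemma~\ref{lGDelta}, Lemma~\ref{lSubsetStability}, Theorem~\ref{tDedicnost}) all require the closed set either to meet the induced set densely a priori or to be built from $G_\delta$ sets. It is quite possible the honest statement of the proposition only gives $L\notin\R_0$ under an additional mild hypothesis on $N$ (and the full strength of the conclusion $P(L)\notin\R_0$ genuinely needs the ``moreover'' clause about $G_\delta$ points); I would structure the write-up to make clear precisely which hereditarity lemma is invoked at the gluing seam, and I expect the proof in the paper uses the same two-to-one quotient construction together with the $G_\delta$-point transfer feeding into Lemma~\ref{lInPKSpaces} for the $P(L)$ assertion.
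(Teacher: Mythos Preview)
Your quotient construction is exactly the one in the paper, and you correctly identify the seam $N'=Q(N)$ as the closed set that must fail to inherit a skeleton. But you are missing the one-line observation that dissolves your ``main obstacle'': after the gluing, $N'$ is the intersection of the closures of two \emph{open} subsets of $L$. Concretely, pick disjoint open $U',V'\subset K$ with $M\subset U'$, $N\subset V'$ and $\overline{U'}\cap\overline{V'}=\emptyset$; set $U=Q(U')\setminus N'$ and $V=Q(V')\setminus N'$. These are disjoint open subsets of $L$ with $\overline U\cap\overline V=N'$. Now Theorem~\ref{tDedicnost}(ii) (i.e.\ Lemma~\ref{lSubsetStability}) applies directly: if $L\in\R_0$ then $N'\in\R_0$, contradiction. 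No separate density-of-$D\cap N'$ argument is needed, and no $G_\delta$ hypothesis either; the ``approached from both sides'' intuition you articulated is precisely what $N'=\overline U\cap\overline V$ encodes, and Lemma~\ref{lSubsetStability} already packages the Fr\'echet--Urysohn work.

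For the ``moreover'' clause your Dirac-measure route has a real gap: the hypothesis gives points that are $G_\delta$ \emph{relatively in $N$}, not in $L$, so you cannot invoke Lemma~\ref{lGDelta} or Lemma~\ref{lInPKSpaces} inside $L$ or $P(L)$ for those points. The paper instead lifts the same open-closure trick to the level of measures: one checks that $P(N')=P(\overline U)\cap P(\overline V)$ and that each $P(\overline W)$, for $W\subset L$ open, is of the form $\bigcap_n\overline{G_n}$ with $G_n$ open in $P(L)$. Hence Theorem~\ref{tDedicnost}(ii) yields $P(N')\in\R_0$ whenever $P(L)\in\R_0$. The dense set of relative $G_\delta$ points of $N$ is used only at the very last step, to descend from $P(N)\in\R_0$ back to $N\in\R_0$ via the equivalence in Proposition~\ref{pSkeletonAndPK}.
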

\begin{proof}We use the same idea as in {\cite[Proposition 2]{kalenda1}}. Let $h:M\to N$ be a homeomorphism and put $L = K\setminus M$ with the quotient topology defined by the mapping
$$\varphi(x) = \left\{
\begin{array}{ll}
	x & x\in K\setminus M\\
	h(x) & x\in M.
\end{array}\right.$$

There are disjoint open sets $U', V'$ in $K$ such that $U'\supset M$, $V'\supset N$ and $\overline{U'}\cap\overline{V'} = \emptyset$. Put $U = \varphi(U')\setminus N$ and $V = \varphi(V')\setminus N$. Then it follows from the definition of the quotient topology that $U$ and $V$ are disjoint open sets in $L$ and it is easy to see that $\overline{U}\cap\overline{V} = N$. Let us assume that $L\in\R_0$. Then, by Theorem \ref{tDedicnost}(ii), $N$ has a retractional skeleton, which is a contradiction.

Finally, let us assume that $N$ has a dense set of (relatively) $G_\delta$ points and $P(L)\in\R_0$. Copying word by word the arguments from {\cite[Proposition 2]{kalenda1}}, we observe that $P(\overline{W})$ is of the form $\bigcap_{n\in\en}\overline{G_n}$ with $G_n$ open in $P(L)$ whenever $W\subset L$ is open and that $P(N) = P(\overline{U})\cap P(\overline{V})$. By Theorem \ref{tDedicnost}(ii), $P(N)$ has a retractional skeleton. By Proposition \ref{pSkeletonAndPK}, $N$ has a retractional skeleton, which is a contradiction.
\end{proof}

The following Corollary is a generalization of {\cite[Corollary 1]{kalenda1}}. The prove can be done just by copying word by word the arguments from \cite{kalenda1}, using Example \ref{eNotRetractSkeleton} and Proposition \ref{pContinuousImage} instead of {\cite[Example 3.4]{kalenda2}} and {\cite[Proposition 2]{kalenda1}}.

\begin{cor}\label{cFourDisjointCopies}Let $K$ be a compact space which contains four pairwise disjoint nowhere dense homeomorphic copies of the ordinal segment $[0,\omega_1]$. Then there is $L$, at most four-to-one continuous image of $K$, such that $P(L)\notin\R_0$\end{cor}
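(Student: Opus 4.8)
The plan is to reduce to Proposition \ref{pContinuousImage} by pairing up the four disjoint copies of $[0,\omega_1]$ in an appropriate way. Denote by $N_1, N_2, N_3, N_4 \subset K$ the four pairwise disjoint nowhere dense homeomorphic copies of $[0,\omega_1]$. First I would use the fact (Example \ref{eNotRetractSkeleton}) that the compact space $N$ obtained from $N_1 \oplus N_2$ (two disjoint copies of $[0,\omega_1]$) by identifying the two top points $\omega_1$ is not in $\R_0$; indeed this is precisely the space $K_2$ from Example \ref{eNotRetractSkeleton}(2). Similarly let $M$ be the analogous identification performed on $N_3 \oplus N_4$. Then $M$ and $N$ are disjoint homeomorphic closed nowhere dense subsets of the quotient $K'$ of $K$ obtained by performing both top-point identifications; each of $M$ and $N$ has a dense set of relatively $G_\delta$ points (all the non-top points of the ordinal segments are $G_\delta$, and together with the identified top point they are dense).

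Next I would verify that the natural quotient map $K \to K'$ is at most two-to-one (each identification merges exactly two points, and the two identifications involve disjoint pairs), that $M$ and $N$ remain nowhere dense and closed in $K'$, and that they are homeomorphic via a homeomorphism carrying the dense set of $G_\delta$ points of one onto that of the other. Having checked this, Proposition \ref{pContinuousImage} applies to $K'$: it yields $L$, an at most two-to-one continuous image of $K'$, with $P(L) \notin \R_0$. Composing the two quotient maps, $L$ is an at most four-to-one continuous image of $K$, which is the conclusion.

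The main obstacle I expect is bookkeeping rather than a genuine difficulty: one must be careful that after the top-point identifications the sets $M$ and $N$ are still \emph{disjoint} (this uses that $N_1,N_2,N_3,N_4$ were pairwise disjoint and the identified points are distinct), still \emph{closed} and \emph{nowhere dense} in $K'$ (the identification is along a closed set and does not enlarge closures badly since the $N_i$ are nowhere dense), and that $N$ really fails to have a retractional skeleton — here one invokes Example \ref{eNotRetractSkeleton}(2) directly, since the identification space of two copies of $[0,\omega_1]$ along their top points is exactly $K_2$. One also needs the ``Moreover'' clause of Proposition \ref{pContinuousImage}, which requires the dense-$G_\delta$-points hypothesis on $N$; this is immediate because in $[0,\omega_1]$ every ordinal $\alpha < \omega_1$ is a $G_\delta$ point and these are dense, and they remain $G_\delta$ and dense in the identified space. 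Once these routine verifications are in place, the result follows by the indicated composition of quotients, exactly paralleling the argument of {\cite[Corollary 1]{kalenda1}}.
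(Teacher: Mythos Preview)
Your proposal is correct and follows essentially the same route as the paper: the paper simply says to copy the argument of \cite[Corollary 1]{kalenda1}, replacing the Valdivia-specific ingredients by Example~\ref{eNotRetractSkeleton}(2) and Proposition~\ref{pContinuousImage}, and that is precisely what you do --- pair the four copies, identify top points to obtain two disjoint nowhere dense copies of $K_2$ inside an at most two-to-one quotient $K'$, then apply Proposition~\ref{pContinuousImage} (with its ``Moreover'' clause) and compose the quotients.
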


In the proof of Theorem \ref{tMain2}, the following generalization of {\cite[Proposition 3]{kalenda1}} will be required.

\begin{proposition}\label{pCorsonWithoutM}Let $K$ be a Corson compact space without property $(M)$. Then there is a hyperplane $Y\subset \C(K)$ such that $B_{Y^*}\notin\R_0$.
\end{proposition}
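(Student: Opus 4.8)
The plan is to mimic the proof of \cite[Proposition 3]{kalenda1}, exploiting the failure of property $(M)$ to produce, by a measure on $K$, an uncountable "almost disjoint" family of clopen-like sets in a countable-support sense, and then to build an explicit hyperplane $Y\subset\C(K)$ whose dual ball has a configuration of $G_\delta$ points obstructing a retractional skeleton via Corollary \ref{cNotSkeleton}. First I would fix a Radon probability measure $\mu$ on $K$ whose support $S=\operatorname{supp}\mu$ is non-separable; since $K$ is Corson, $S$ is Corson as well, and a non-separable Corson space carries an uncountable family of pairwise disjoint nonempty open sets (this is where non-separability is really used). Using $\mu$ one extracts an uncountable family $\{U_\alpha\}_{\alpha<\omega_1}$ of pairwise disjoint open subsets of $K$ with $\mu(U_\alpha)>0$, and, after shrinking, one may assume $\sum_\alpha\mu(U_\alpha)\le 1$ and pick points $k_\alpha\in U_\alpha$. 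Because $K$ is Corson, each $k_\alpha$ may be taken to be a $G_\delta$ point (the set of $G_\delta$ points is dense in a Corson compact), and one fixes a function $h\in\C(K)$ with $\int h\,d\mu\ne 0$, say $\int h\,d\mu=1$, to serve as the functional defining the hyperplane.

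The key step is to set $Y=\ker(\nu)$ where $\nu\in\C(K)^*$ is the measure $\mu$ (viewed as a functional on $\C(K)$), so $Y$ is a hyperplane, and then to analyze $(B_{Y^*},w^*)$. Restriction $i^*\colon\C(K)^*\to Y^*$ is the adjoint of the inclusion $i\colon Y\hookrightarrow\C(K)$; one identifies $Y^*$ with $\C(K)^*/\,\mathbb R\mu$ and shows that the Dirac measures $\delta_{k_\alpha}$, and countably many auxiliary Dirac-type points coming from a convergent sequence forced by the disjointness of the $U_\alpha$'s, descend to $w^*$-$G_\delta$ points in $B_{Y^*}$ (after suitable normalization so they land in the ball) and that some point $x^*$ lies simultaneously in the closure of a countable such family and of the uncountable family $\{[\delta_{k_\alpha}]\}$, while no countable subsequence of the latter converges to $x^*$. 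The obstruction point $x^*$ is produced exactly as in \cite{kalenda1}: the $U_\alpha$ are pairwise disjoint, so for any sequence $\alpha_n$ the measures $\delta_{k_{\alpha_n}}$ "spread out" and cannot converge to anything in the closure of a fixed countable piece; meanwhile $\mu$ itself, being a countable combination from the $U_\alpha$'s up to normalization, realizes $x^*$ in $Y^*$ as a common limit point. Then Corollary \ref{cNotSkeleton} applied inside $(B_{Y^*},w^*)$ yields $B_{Y^*}\notin\R_0$.

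The main obstacle will be the bookkeeping needed to verify that the relevant points really are $w^*$-$G_\delta$ points of $B_{Y^*}$ and not merely of $\C(K)^*$: passing to the quotient by $\mathbb R\mu$ can destroy the $G_\delta$ property, so one must choose the $k_\alpha$ and the scaling of $h$ carefully — concretely, one arranges that each $\delta_{k_\alpha}$ is $w^*$-exposed in $B_{\C(K)^*}$ by a function lying in $Y$ (i.e. orthogonal to $\mu$), using that $k_\alpha$ is a $G_\delta$ point of the Corson compact $K$ together with $\mu(\{k_\alpha\})=0$ (Corson compacta support no atoms at $G_\delta$ points once $\mu$ is diffuse on a non-separable support, or one perturbs to kill any atom). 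A secondary technical point is producing the countable "anchor" family $\{g_k\}$ of $G_\delta$ points with $x^*$ in its closure; here one uses the Fréchet–Urysohn behaviour that \emph{would} hold if $B_{Y^*}$ had a skeleton, combined with the explicit description of $x^*$ as the class of $\mu$, to manufacture a convergent sequence of Dirac measures supported in $\bigcup_\alpha U_\alpha$ along a single well-chosen $\alpha$-sequence — precisely the slot where $(M)$'s failure is converted into a concrete contradiction. Once these two points are pinned down, the rest is a routine application of Corollary \ref{cNotSkeleton}.
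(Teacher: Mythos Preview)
Your approach differs substantially from the paper's, and as sketched it has a genuine gap.

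The paper does not use Corollary~\ref{cNotSkeleton} here at all. It takes $Y=\{f\in\C(K):f(k)=\mu(f)\}=\ker(\delta_k-\mu)$ for a continuous $\mu\in P(K)$ lying outside the (unique) set $A$ induced by an $r$-skeleton in $P(K)$ and a point $k\in\operatorname{supp}\mu$, so that $i^*(\delta_k)=i^*(\mu)$; it then argues by contradiction. If $B_{Y^*}\in\R_0$ then $i^*(P(K))\in\R_0$ by Theorem~\ref{tDedicnost}; the induced set is pulled back to $P(K)$ via Lemma~\ref{lQuotient2} (this is precisely where the elementary-submodel machinery enters), and by uniqueness (Corollary~\ref{cUnique}) the pullback equals $A$. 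But $\delta_k\in A$, $\mu\notin A$, and $i^*(\delta_k)=i^*(\mu)$ give the contradiction.

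Your route via Corollary~\ref{cNotSkeleton} breaks at the obstruction point. With $Y=\ker\mu$ one has $Y^*\cong\C(K)^*/\er\mu$, so your candidate $x^*=[\mu]$ is $0$. But $0$ does \emph{not} lie in the $w^*$-closure of $\{[\delta_{k_\alpha}]\}$ in $Y^*$: if $f(k_{\alpha_i})\to 0$ for every $f\in Y$ along some net, then writing an arbitrary $g\in\C(K)$ as $(g-\mu(g)\cdot 1)+\mu(g)\cdot 1$ with $g-\mu(g)\cdot 1\in Y$ gives $g(k_{\alpha_i})\to\mu(g)$, i.e.\ $\delta_{k_{\alpha_i}}\to\mu$ weak$^*$ in $\C(K)^*$; this is impossible because the $w^*$-closure of the Dirac measures in $P(K)$ consists of Dirac measures and $\mu$ is diffuse. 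Relatedly, the description of $\mu$ as ``a countable combination from the $U_\alpha$'s'' is wrong for a continuous measure. Finally, invoking ``the Fr\'echet--Urysohn behaviour that \emph{would} hold if $B_{Y^*}$ had a skeleton'' to manufacture the countable anchor family $\{g_k\}$ conflicts with your stated plan of applying Corollary~\ref{cNotSkeleton}, which needs that family as an input independent of any hypothetical skeleton. The piece you are missing is exactly what Lemma~\ref{lQuotient2} supplies in the paper's argument.
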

\begin{proof}
 In {\cite[Proposition 3]{kalenda1}} it is observed that under the assumptions above, the following holds.

 $P(K)$ has a dense set of $G_\delta$ points, and a dense $\Sigma$-subset $A$. This set $A$ contains all the Dirac measures and there is a continuous measure $\mu\in P(K)\setminus A$. Take an arbitrary point $k$ from the support of the measure $\mu$ and put
 $$Y = \{f\in\C(K):\;f(k) = \mu(f)\}.$$
 Denote by $i$ the inclusion of $Y$ into $\C(K)$. Then $i^*(P(K))$ is a $w^*$-closed $w^*$-$G_\delta$ subset of $B_{Y^*}$.

 Now, in {\cite[Proposition 3]{kalenda1}} it is proved that $B_{Y^*}$ is not Valdivia. Let us see that it is not even in the class $\R_0$. For contradiction suppose $B_{Y^*}\in\R_0$.

 Then, by Theorem \ref{tDedicnost}, $i^*(P(K))\in\R_0$. Let $B$ be a set induced by an $r$-skeleton in $i^*(P(K))$. It follows from {\cite[Lemma 7]{kalenda1}} that $C = (i^*)^{-1}(B)\cap P(K)$ is dense in $P(K)$. By Lemma \ref{lQuotient2}, $C$ is induced by an $r$-skeleton in $P(K)$. As $P(K)$ has a dense set of $G_\delta$ points, $C=A$ by Corollary \ref{cUnique}.
 But $\delta_k\in A = C$ and also $\mu\notin A = C$. This is a contradiction with $i^*(\delta_k) = i^*(\mu)$.
\end{proof}

Finally, we observe that continuous images of spaces from the class $\R_0$ belong to the class $\GO$. The proof is again completely analogous to a similar result concerning Valdivia compacta {\cite[Proposition 4]{kalenda1}} (we only use Theorems \ref{tDedicnost} and \ref{tDichotomy} instead of {\cite[Lemma 5]{kalenda1}} and {\cite[Theorem 1]{kalenda3}}) and so it is omitted.

\begin{proposition}
 Lek $K$ be a compact space which is a continuous image of a space from the class $\R_0$. Then $K$ belongs to the class $\GO$.
\end{proposition}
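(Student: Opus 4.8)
The plan is to follow the template indicated by the author, namely to mimic the proof of {\cite[Proposition 4]{kalenda1}} for Valdivia compacta, replacing the ingredients specific to that setting by their $\R_0$-analogues established above. So let $K$ be a continuous image of some $K_0\in\R_0$, say $K = \varphi[K_0]$ with $\varphi$ continuous, and fix a nonempty open $U\subset K$ which contains no $G_\delta$ point of $K$; I must produce a copy of $[0,\omega_1]$ inside the one-point compactification of $U$.

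First I would pass to the class $\RC$: since $K_0\in\R_0\subset\R\subset\RC$ and $\RC$ is closed under continuous images, $K\in\RC$. If $[0,\omega_1]$ already embeds into $K$, then — because $U$ contains no $G_\delta$ point of $K$ and $[0,\omega_1]$ is, up to a countable tail, concentrated near a non-$G_\delta$ point — one arranges (as in \cite{kalenda1}) that a tail of such a copy lies inside $U$, and the one-point compactification of $U$ then contains a copy of $[0,\omega_1]$; here I would use that $U$ open and every $G_\delta$ subset of $U$ misses the ``many'' non-$G_\delta$ points. If on the other hand $[0,\omega_1]$ does not embed into $K$, then by Theorem \ref{tDichotomy} $K$ is Corson compact; but a nonempty open subset of a Corson compact always contains a $G_\delta$ point of the ambient space (Corson compacta are Fr\'echet--Urysohn, in fact have a dense set of $G_\delta$ points on every open piece), contradicting the assumption on $U$. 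Hence this branch is vacuous and the dichotomy does the bookkeeping.

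The actual content — and the place where $\R_0$-specific input is needed rather than $\Sigma$-subset bookkeeping — is the localization step: given an embedded $[0,\omega_1]$ in $K$ and the open set $U$ missing all $G_\delta$ points, one has to produce the embedded copy with a tail sitting in $U$. Here I would invoke Theorem \ref{tDedicnost}: closed subsets of $K$ that are closures of unions of $G_\delta$ sets, and $G_\delta$-type subsets $\bigcap_n\ov{U}_n$, inherit membership in $\R_0$ (when $K$ itself does); combined with Theorem \ref{tDichotomy} this lets one argue that if no such localized copy existed, the relevant closed piece of $K$ sitting over $U$ would be Corson and thus would have a $G_\delta$ point, again contradicting the hypothesis on $U$. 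The main obstacle is precisely keeping this localization argument honest: one must make sure the closed set extracted near the cluster point of the $[0,\omega_1]$-copy is genuinely of the form handled by Theorem \ref{tDedicnost} (a closure of a union of $G_\delta$ sets, or a countable intersection of closures of opens) while simultaneously being large enough to still contain an uncountable well-ordered piece and small enough to avoid $G_\delta$ points — exactly the balancing act carried out in \cite{kalenda1}. Since, as the author notes, this is word-for-word the argument there with Theorems \ref{tDedicnost} and \ref{tDichotomy} substituted for {\cite[Lemma 5]{kalenda1}} and {\cite[Theorem 1]{kalenda3}}, I would simply cite that proof and indicate the two substitutions, rather than reproduce it.
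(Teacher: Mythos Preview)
Your bottom line---cite \cite[Proposition 4]{kalenda1} and substitute Theorems \ref{tDedicnost} and \ref{tDichotomy}---is exactly what the paper does. But your sketch of \emph{how} the argument runs has the two ingredients applied in the wrong places, and this is a genuine gap, not a cosmetic one.

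The problem is that Theorem \ref{tDedicnost} requires its ambient space to lie in $\R_0$, and $K$ need not: only $K_0$ does. So you cannot conclude that ``closed subsets of $K$ of $G_\delta$-type inherit membership in $\R_0$''. Likewise, applying Theorem \ref{tDichotomy} globally to $K$ and then hoping to slide a tail of the resulting $[0,\omega_1]$ into $U$ does not work: an embedded copy of $[0,\omega_1]$ can sit entirely in $K\setminus U$, and there is no mechanism to relocate it. The correct order (as in \cite{kalenda1}) is to localize first and apply the dichotomy second. Given $x\in U$, choose open sets $U\supset \ov{U_1}\supset U_1\supset\ov{U_2}\supset\cdots$ with $x\in G=\bigcap_n\ov{U_n}\subset U$. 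Pulling back, $\varphi^{-1}(G)=\bigcap_n\ov{\varphi^{-1}(U_n)}$ is of the form handled by Theorem \ref{tDedicnost}(ii) \emph{in $K_0$}, hence $\varphi^{-1}(G)\in\R_0$. Therefore $G=\varphi[\varphi^{-1}(G)]\in\RC$, and now Theorem \ref{tDichotomy} applied to $G$ gives the alternative: either $[0,\omega_1]$ embeds in $G\subset U$ (and we are done), or $G$ is Corson, whence $G$ has a $G_\delta$ point, which---since $G$ is itself $G_\delta$ in $K$ and contained in $U$---is a $G_\delta$ point of $K$ lying in $U$, contradicting the hypothesis on $U$. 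The preimage step is precisely where Theorem \ref{tDedicnost} earns its keep; your sketch bypasses it.
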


\section{Proofs of the main results and open problems}

Without mentioning it any further, we will use two important results mentioned above. First, a Banach space is 1-Plichko if and only if it has a commutative 1-projectional skeleton. Next, a compact space is Valdivia if and only if it has a commutative retractional skeleton.

\begin{proof}[Proof of Theorem \ref{tMain1}]
 The implication (i)$\Rightarrow$(ii) comes from {\cite[Theorem 1]{kalenda}}. Obviously, (ii)$\Rightarrow$(iii) and (iii)$\Rightarrow$(iv). It follows from Proposition \ref{pSkeletonAndPK} that (ii)$\Rightarrow$(iii) holds. Finally, in order to prove (iii)$\Rightarrow$(i), let us assume that $B_{X^*}$ is not a Corson compact. If $B_{X^*}\notin\R_0$, we are done. If $B_{X^*}\in\R_0$, we use Theorems \ref{tDichotomy} and \ref{tNotSkeleton} to find an equivalent norm $\tn\cdot\tn$ such that $(B_{\langle X,\tn\cdot\tn\rangle},w^*)\notin\R_0$.
\end{proof}

\begin{proof}[Proof of Theorem \ref{tMain3}]
 It follows from Proposition \ref{pSkeletonAndPK} that (i)$\Rightarrow$(ii)$\Rightarrow$(iii)$\Rightarrow$(iv) hold. Finally, let $K$ be a non-Corson compact from the class $\GO$. Let $G$ be the set of $G_\delta$ points in $K$. If $\overline{G}$ is not Corson, we use Proposition \ref{pGDeltaPointsNotCorson} to get a two-to-one continuous image $L$ of $K$ such that $P(L)\notin\R_0$. If $\overline{G}$ is Corson, we copy word by word the arguments from the proof of (3)$\Rightarrow$(1) in {\cite[Theorem 2]{kalenda1}} to get a continuous image $L_0$ of $K$ such that it contains four pairwise disjoint nowhere dense homeomorphic copies of $[0,\omega_1]$. Now it is enough to use Corollary \ref{cFourDisjointCopies}.
\end{proof}

\begin{proof}[Proof of Theorem \ref{tMain2}]
 The implication (i)$\Rightarrow$(ii) comes from {\cite[Theorem 1]{kalenda1}}. It follows from Proposition \ref{pSkeletonAndPK} that (ii)$\Rightarrow$(iii) holds. Suppose that (iii) holds. By Theorem \ref{tMain3}, $K$ is Corson. If it had not the property $(M)$, we would get a contradiction with Proposition \ref{pCorsonWithoutM}.
\end{proof}

Theorem \ref{tMain} is just an immediate corollary of Theorem \ref{tMain3} and the well known fact that a continuous image of a Corson compact is again a Corson compact.\\

Finally, we state several open questions.

Given a compact space $K$ and a dense subset $D\subset K$, let $\tau_p(D)$ denote the topology of the pointwise convergence on $D$ (i.e. the weakest topology on $\C(K)$ such that $f\mapsto f(d)$ is continuous for every $d\in D$). Then $D$ is a $\Sigma$-subset of $K$ if and only if $D$ is countably closed and $(\C(K),\tau_p(D))$ is primarily Lindel\"of (see {\cite[Definition 1.2 and Theorem 2.1]{kalenda5}}).

\begin{problem}\label{problem1}Assume $D\subset K$ is a dense (resp. dense and countably closed) set in a compact space. Find a topological property $(T)$ of $(\C(K),\tau_p(D))$ such that $D$ is induced by an $r$-skeleton in $K$ if and only if $(\C(K),\tau_p(D))$ has the property $(T)$.
\end{problem}

For the motivation of the following question see Remark \ref{remark}.
\begin{question}\label{question1}Let $K$ be a compact space. Consider the following conditions
\begin{enumerate}[\upshape (i)]
	\item $\C(K)$ has a 1-projectional skeleton
	\item There is a convex symmetric set induced by an $r$-skeleton in $(B_{\C(K)^*},w^*)$
	\item There is a convex symmetric set induced by an $r$-skeleton in $P(K)$
\end{enumerate}Is it true that (iii)$\Rightarrow$(ii) (resp. (ii)$\Rightarrow$(i), resp. (iii)$\Rightarrow$(i))?
\end{question}

\begin{ack}
 The author would like to thank Ond\v{r}ej Kalenda for suggesting the topic and for many useful remarks and discussions.
\end{ack}

\end{document}